\documentclass[11pt]{article}

\usepackage[margin=1in]{geometry}
\usepackage{amsmath,amssymb,mathtools,amsthm}
\usepackage{enumitem}
\usepackage{hyperref}
\usepackage{xcolor}
\usepackage{booktabs}
\usepackage{graphicx}
\usepackage{subcaption}
\usepackage{setspace}

\newcommand{\conv}{\operatorname{conv}}

\newcommand{\R}{\mathbb{R}}
\newcommand{\B}{\{0,1\}}
\newcommand{\eps}{\varepsilon}

\newtheorem{theorem}{Theorem}[section]
\newtheorem{lemma}[theorem]{Lemma}
\newtheorem{proposition}[theorem]{Proposition}
\newtheorem{corollary}[theorem]{Corollary}

\theoremstyle{definition}

\theoremstyle{remark}
\newtheorem{remark}[theorem]{Remark}

\title{Decomposition of Sparse Integer Programs via Nonlinear Edge Encodings and Column-and-Row Generation}
\author{Gustavo Angulo\thanks{gangulo@uc.cl, Department of Industrial and Systems Engineering, Pontificia Universidad Católica de Chile, Santiago, Chile.} \and Santanu S. Dey\thanks{sdey30@gatech.edu, H. Milton Stewart School of Industrial and Systems Engineering, Georgia Institute of Technology, Atlanta, GA, USA.}}
\date{}

\begin{document}

\maketitle

\begin{abstract}
A wide range of sparse integer programs admit a block structure in which subproblems interact through a small set of shared variables. Dualizing the linking equalities yields a decomposable Lagrangian relaxation, but generally introduces a duality gap. Recent work shows that this gap can be closed while preserving decomposability by dualizing exponentially large families of redundant nonlinear consistency constraints on the shared variables. We develop a computational framework for exploiting this idea without explicitly constructing the resulting exponentially large relaxation. Our framework combines nonlinear edge encodings of shared-variable consistency with a column-and-row generation (CRG) algorithm that generates local integer solutions by pricing and encoding constraints by separation. With complete encodings and exact separation, the framework recovers the exact relaxation while retaining independent optimization over the blocks. We introduce several encoding families and establish exponential separations among them: the Generalized family can require exponentially fewer constraints than the Vertex, Monomial, or Reflected families, yet can itself require exponentially many constraints on instances for which a single problem-specific encoding suffices. Computational experiments on decomposed stable-set and dominating-set instances show that CRG substantially outperforms a monolithic formulation across a range of tree topologies and coupling strengths. The results also show that richer encoding families need not perform better computationally, highlighting the choice of encoding as a central issue in effective decomposition.

{\bf Keywords:} Sparse integer programming, Lagrangian decomposition, Dantzig--Wolfe decomposition, column-and-row generation, tree-structured optimization.

{\bf MSC:} 90C10, 90C06, 90C46.

\end{abstract}

\section{Introduction}
\label{sec:introduction}
A wide range of binary optimization problems arising in operations and engineering exhibit a block-sparse structure: each constraint involves only a small subset of the decision variables, and the variables can be grouped into blocks that interact through a limited number of shared variables; see the discussion in~\cite{bergner2015automatic,dey2018analysis}. This structure can be represented by the intersection graph of the constraint matrix~\cite{dey2018analysis, FaenzaMunozPokutta2022}. A tree decomposition of this graph~\cite{robertson-seymour} reveals a collection of subproblems organized on a tree, where adjacent subproblems interact only through shared variables~\cite{cdx-ipco}.

Introducing a local copy of every variable in each block and enforcing consistency of the copies on the shared variables yields a block-structured formulation organized on a tree $T=(V,E)$,
\begin{equation}\label{eq:block-opt} \mathrm{OPT} := \min\left\{\, \sum_{i\in V} f_i(\mathbf{x}^i) \;:\; \mathbf{x}^i\in X^i\ \ (i\in V),\ \ x_p^{e,i}=x_p^{e,j}\ \ (e=ij\in E,\ p\in Q^e) \,\right\}, 
\end{equation}
where $X^i$ is a local feasible set associated with node $i \in V$, $Q^e$ is the set of variables shared by the two blocks $i, j\in V$ incident to edge $e=ij \in E$, and the equalities enforce consistency of their local copies across the edges of the tree. More details are given in Section~\ref{sec:primal-convexification}.

A leading source of such structure is two-stage and multistage stochastic binary optimization~\cite{birge-louveaux}. In the deterministic equivalent, each scenario defines a local block coupled to a  separate first-stage block through the shared first-stage variables, so the consistency constraints $x_p^{e,i}=x_p^{e,j}$ in~\eqref{eq:block-opt} impose nonanticipativity~\cite{rockafellar-wets}. The tree decomposition $T=(V,E)$ is then a star, with the first-stage block at the center and the scenario blocks as leaves. Multistage problems produce deeper trees.

\paragraph{Decomposability against bound quality.}
Dualizing the coupling equalities $x_p^{e,i}=x_p^{e,j}$ in~\eqref{eq:block-opt} decouples the formulation into one independent subproblem per block, an idea known as dual decomposition, variable splitting, or Lagrangian decomposition~\cite{caroe-schultz,guignard-kim,boland-ph}. The resulting Lagrangian subproblems are solvable in parallel, and valid dual bounds are available at every iteration. The price of this decomposability is a duality gap: because the local sets are nonconvex, the Lagrangian dual generally falls short of $\mathrm{OPT}$, and the resulting bound can be weak. This tension between decomposability and bound quality is one central obstacle in decomposition methods for integer programming. Methods that close the gap typically do so by giving up separability, by restricting the class of problems to which they apply, or by abandoning the availability of intermediate dual bounds.

This tension between decomposability and the duality gap can, however, be resolved by exploiting the fact that constraints that are redundant in the primal formulation need not be redundant in the Lagrangian dual. The paper~\cite{cdx-ipco} pioneered this approach, showing how carefully chosen redundant constraints can be used to strengthen the Lagrangian dual while preserving decomposability. For each edge $e=ij$ of the tree decomposition, the coupling constraints enforce $x_p^{e,i}=x_p^{e,j}$ for $p\in Q^e$. Since the local copies agree at every feasible solution, any function of the shared variables must also agree. Accordingly, for a chosen family of (nonlinear) functions $\mathcal H^e$, the formulation is augmented with redundant constraints of the form $\phi\!\left(\mathbf{x}^{e,i}\right) = \phi\!\left(\mathbf{x}^{e,j}\right)$ for $e=ij\in E$ and $\phi\in \mathcal H^e$. These constraints, henceforth referred to as encoding constraints, are dualized together with the original coupling constraints. The resulting relaxation remains separable across blocks while producing stronger dual bounds. The paper~\cite{cdx-ipco} shows that linear functions do not strengthen the dual and studies two nonlinear encoding families: monomial constraints, which enforce consistency of products of shared variables and induce the M-Lagrangian dual, and vertex-indicator constraints, which enforce consistency of all assignments of the shared variables and induce the V-Lagrangian dual. Both preserve decomposability and recover strong duality when enforced exhaustively on a tree decomposition. The resulting bound admits a Geoffrion-style primal characterization~\cite{geoffrion} as the value of a convex program obtained by replacing each local set with the convex hull of its lifted representation while leaving the coupling constraints explicit.

\paragraph{A zero-gap Lagrangian dual is not yet an algorithm.}
This resolves the tension in principle but not in computation, for two reasons. First, exact encoding requires exponentially many consistency constraints in the number of shared variables, making an explicit formulation of the full strengthened dual impractical. Second, when working with only a subset of the encoding constraints, the nonlinear Lagrangian dual is naturally optimized by subgradient or bundle methods, which return dual multipliers but no primal solution from which additional violated encoding constraints can be separated directly; recovering one is possible but requires additional machinery~\cite{anstreicher2009two}. In practice, one must therefore work with a fixed, strict subfamily chosen before the solve begins; see, for example, the computations in~\cite{cdx-ipco}. In the case of the M-Lagrangian dual, for example, this corresponds to an apriori truncation by degree, and the resulting bound depends on that truncation rather than adapting to the inequalities required by the particular instance.

\paragraph{This paper.}
We treat the primal characterization of the Lagrangian dual as the computational object and solve the corresponding convex program directly via Dantzig--Wolfe decomposition~\cite{dantzig-wolfe}. Columns correspond to feasible lifted integer solutions of the local blocks, while rows correspond to consistency constraints: the original equalities on shared variables together with the nonlinear encoding constraints induced by the chosen edge encodings. Since both the column set and the encoding constraints can be exponential in size, we generate columns by pricing and constraints by separation, yielding a column-and-row generation (CRG) framework.

A key consequence of working in the primal is that the encoding family enters only through a separation oracle, avoiding explicit instantiation of the exponentially large family of encoding constraints. Instead, violated constraints are identified dynamically and added to the master as needed, allowing the algorithm to converge to the relaxation defined by the complete family while materializing only a small fraction of it.

This yields, to the best of our knowledge, the first decomposition algorithm for integer programs that simultaneously has all three of the following properties. It is \emph{exact}: with a complete encoding family and exact separation, the algorithm terminates with the optimal value of~\eqref{eq:block-opt}. It is \emph{decomposable}: all subproblem work factors into one independent pricing problem per block, solvable in parallel, and a valid dual bound on $\mathrm{OPT}$ is available at the end of every outer iteration. And it is \emph{structurally general}: it requires only that the block interaction graph be a tree and that the shared variables be binary. No convexity, continuous recourse, relatively complete recourse, or two-stage structure is assumed of the blocks.


\paragraph{Encoding choice is a modeling decision.}
Because the encoding family enters only through separation, the framework accommodates any family for which violated constraints can be identified. This leads to the question of how the choice of encoding family affects the method, which we investigate both theoretically and computationally. We organize the consistency encoding constraints through a general class of nonlinear edge encodings containing the Vertex and Monomial constructions of~\cite{cdx-ipco} together with two further families, Reflected and Generalized. Within this class we establish exponential separations: the Generalized family strictly dominates the other three, the Monomial and Reflected families are incomparable to one another, and even the Generalized family is not universal, since it can require exponentially many rows on a polynomially tractable instance that a single encoding outside the class closes. The proofs rest on classical polynomial threshold degree lower bounds~\cite{minsky-papert} together with random restriction arguments.

\paragraph{Contributions.}
Therefore, the contributions of this paper are as follows.
\begin{enumerate}
\item \emph{An exact, decomposable, and structurally general decomposition algorithm.} We develop a column-and-row generation (CRG) framework that computes the primal characterization of the encoding-strengthened Lagrangian bounds of~\cite{cdx-ipco}. The method combines Dantzig--Wolfe column generation with edge-based separation routines that dynamically identify and add violated encoding constraints, and it delivers the three properties stated above.

\item \emph{A unified encoding framework.} We organize the generated consistency constraints through nonlinear edge encodings and introduce a Generalized encoding family that subsumes the Vertex and Monomial constructions of~\cite{cdx-ipco} and the new Reflected family.

\item \emph{Theoretical limits of encoding families.} We establish that the Monomial, Reflected, and Vertex families can each require exponentially more rows than the Generalized family to close the gap, so the Generalized family strictly dominates all three. The Monomial and Reflected families are moreover not comparable to each other: one can require exponentially many rows where the other needs only polynomially many, and vice versa. Yet even the Generalized family is not universal: it can require exponentially many rows on a polynomially tractable instance for which a single encoding outside the above families closes the gap. 

\item \emph{Computational evaluation.} We evaluate the proposed encodings on decomposed stable-set and dominating-set instances over star, path, binary-tree, and random-tree topologies, three coupling levels, and both deterministic and stochastic objectives. Beyond the aggregate comparison, we analyze the structure of the generated rows and find that the families producing the most rows are not those producing the best bounds: Reflected and Generalized exhaust their row budget with dense, near-duplicate rows, while Monomial uses a fraction of the budget and gets more out of it.\end{enumerate}

\paragraph{Related decomposition approaches.}
We position our approach within the broader literature on decomposition for integer programming by comparing existing methods along three dimensions: exactness, decomposability, and structural generality.
\begin{itemize}
\item \emph{Inexact methods}: Lagrangian and dual decomposition~\cite{caroe-schultz,guignard-kim} and progressive hedging~\cite{boland-ph} retain decomposability and intermediate dual bounds but leave a duality gap.
\item \emph{Methods requiring a master--recourse structure.} Benders-type methods achieve exactness by constructing an approximation of a recourse value function over the master variables. For two-stage stochastic integer programs with a pure binary first stage, the integer L-shaped method~\cite{benders,laporte-louveaux,angulo-ahmed-dey} is exact and decomposable. A substantial subsequent literature develops stronger cuts for stochastic integer programs, including split cuts derived in an extended space and projected into the Benders reformulation~\cite{bdgl-ijoc}, as well as Lagrangian and strengthened Benders cuts~\cite{zas-sddip,cl-ijoc,acp-lipschitz,deng-xie}; related constructions yield finite convergence for multistage problems with binary state variables. These methods are particularly close in spirit to ours: they also seek to close the gap arising from decomposition, and Lagrangian-cut generation invokes essentially the same local integer-optimization oracle that appears in our pricing step. Scenario decomposition~\cite{ahmed-scenario} achieves exactness by a different mechanism, enumerating first-stage solutions rather than approximating a value function by cuts.

The structural distinction is that these approaches exploit a directed master–recourse organization. For example, in multistage methods such as SDDiP, this takes the form of a sequential stagewise structure, with downstream decisions and value functions conditioned on the state inherited from earlier stages.
In contrast, formulation~\eqref{eq:block-opt} does not designate any block as a master and requires neither a stage ordering nor a recourse value function. Its blocks may be arranged on an arbitrary tree and interact symmetrically through shared variables. In particular, all block subproblems are independent and can be solved fully in parallel.

\item \emph{Non-decomposable approaches.} Augmented Lagrangian~\cite{feizollahi-al,gu-al} and semi-Lagrangian~\cite{beltran-sl} duals attain strong duality at the expense of the decomposable structure, which~\cite{sun-decomp} restores within an augmented Lagrangian framework through a mechanism distinct from the present one.
\item \emph{Other methods.} Nonserial dynamic programming~\cite{bertele-brioschi} and LP formulations for polynomial optimization over tree-structured supports~\cite{bienstock-munoz} achieve exactness on the same sparsity structure, but require an exponentially large object. In addition, nonserial dynamic programming does not generally provide a useful bound if terminated before completion. In contrast, our method does not require an exponential-size formulation upfront and produces a valid dual bound after every completed outer iteration, providing meaningful information even when terminated before convergence.
\end{itemize}
On the mechanics, column-and-row generation builds on Dantzig--Wolfe decomposition~\cite{dantzig-wolfe,luebbecke-cg}: columns are generated by pricing, while the convexified master is progressively strengthened by dynamically generated coupling rows. Unlike branch-and-price~\cite{barnhart-bp}, exactness is obtained without branching, through the generation of sufficiently strong encoding constraints while preserving blockwise pricing. 
The row-generation component plays a role analogous to a hierarchy of block-decomposable relaxations such as DRL*~\cite{minoux-ouzia}, and is related in motivation, though not in mechanism, to the lift-and-project, RLT, and Lasserre hierarchies~\cite{sherali-adams,lasserre,rothvoss}. 
\paragraph{Outline.}
Section~\ref{sec:primal-convexification} sets up the block-structured formulation, the edge encodings, and the convexified primal characterization. The column-and-row generation algorithm is developed in Section~\ref{sec:row-column-generation}. Section~\ref{sec:strategies} describes the four encoding strategies, and Section~\ref{sec:encoding-comparison} formally establishes their strengths. Section~\ref{sec:test-instances} describes the test instances and computational setup, while results are discussed in Section~\ref{sec:computational-results}. Finally, conclusions are drawn in Section~\ref{sec:discussion}.

\section{Primal convexification by edge encodings}
\label{sec:primal-convexification}

We consider a block-structured binary optimization problem whose blocks are organized by a tree $T=(V,E)$. Each node $i\in V$ represents a local feasible set $X^i$ with local variables $\mathbf{x}^i$ and local cost function $f_i$. The local vector $\mathbf{x}^i$ contains the local copy of the variables that belong to block $i$, including the variables shared with neighboring blocks. For each edge $e=ij\in E$, let $Q^e = Q^{ij}$ denote the set of variables shared by blocks $i$ and $j$. The nodes $i$ and $j$ are called the endpoints of $e$ and $Q^e$ is called the boundary of the edge $e$. We denote by $\mathbf{x}^{e,i}$ and $\mathbf{x}^{e,j}$ the boundary subvectors of $\mathbf{x}^i$ and $\mathbf{x}^j$, respectively, indexed by $Q^e$ and written in a common order. Components are denoted by $x_p^{e,i}$ and $x_p^{e,j}$ for $p\in Q^e$. The decomposed formulation is
\begin{equation}
\label{eq:decomposed-problem}
\begin{aligned}
    \mathrm{OPT}:=\min \quad
        & \sum_{i\in V} f_i(\mathbf{x}^i) \\
    \text{s.t.}\quad
        & \mathbf{x}^i\in X^i, && i\in V,\\
        & x_p^{e,i}=x_p^{e,j}, && e=ij\in E,\ p\in Q^e.
\end{aligned}
\end{equation}
Throughout, we assume that all variables appearing in coupling constraints are binary, that is, $X^i$ includes the constraints that $x_p^{e,i} \in \{0,1\}$ for all $e=ij\in E,\ p\in Q^e$. Local variables that are not shared with any neighbor are not constrained to be binary.

The purpose of the strengthened Lagrangian reformulations is to add redundant constraints that are implied by the coupling equations $x_p^{e,i}=x_p^{e,j}$ in \eqref{eq:decomposed-problem}, but improve the Lagrangian dual bound. For each edge $e=ij\in E$, let $\mathcal H^e$ be an index set of \emph{encoding functions} $\phi_h^e : \B^{Q^e}\to \R$, $h\in\mathcal H^e$. The corresponding \emph{encoding vector} associated with block $i$ on edge $e=ij$ is denoted by $\mathbf{w}^{e,i}$, with scalar components $w_h^{e,i}=\phi_h^e(\mathbf{x}^{e,i})$, $h\in\mathcal H^e$. Analogously, block $j$ has a vector $\mathbf{w}^{e,j}$ with entries $w_h^{e,j}=\phi_h^e(\mathbf{x}^{e,j})$. Since  $\mathbf{x}^{e,i}=\mathbf{x}^{e,j}$ in every feasible solution of \eqref{eq:decomposed-problem}, the equalities
$w_h^{e,i}=w_h^{e,j}$, $e=ij\in E$, $h\in\mathcal H^e$,
are redundant for the original integer formulation. The resulting formulation is:
\begin{equation}
\label{eq:decomposed-problem1}
\begin{aligned}
   \min \quad
        & \sum_{i\in V} f_i(\mathbf{x}^i) \\
    \text{s.t.}\quad
        & \mathbf{x}^i\in X^i, && i\in V,\\
        & x_p^{e,i}=x_p^{e,j}, && e=ij\in E,\ p\in Q^e,\\
        &\phi_h^e(\mathbf{x}^{e,i}) = \phi_h^e(\mathbf{x}^{e,j}), && e=ij \in E, h \in \mathcal{H}^e.
\end{aligned}
\end{equation}
The redundant constraint, when dualized, may strengthen the Lagrangian dual as discussed next~\cite{cdx-ipco}.

Given a collection of encoding families $\mathcal H=\{\mathcal H^e\}_{e\in E}$, define the lifted local set
\begin{equation*}
X_{\mathcal H}^i:= \left\{ (\mathbf{x}^i,\mathbf{w}^i) \left| \begin{array}{l}
\mathbf{x}^i\in X^i,\\
\mathbf{w}^i=\{\mathbf{w}^{e,i}: e\in\delta(i)\},\\
w_h^{e,i}=\phi_h^e(\mathbf{x}^{e,i}),
\quad e\in\delta(i),\ h\in\mathcal H^e
\end{array} \right. \right\},
\end{equation*}
where $\delta(i)$ is the set of edges incident to $i$. The primal characterization of the encoding-strengthened Lagrangian dual is the following convex relaxation:
\begin{equation}
\label{eq:primal-characterization}
\begin{aligned}
    \mathrm{OPT}(\mathcal H):=\min \quad
        & \sum_{i\in V} \theta_i\\
    \text{s.t.}\quad
        & (\mathbf{x}^i,\mathbf{w}^i,\theta_i)\in
          \conv\bigl\{(\mathbf{x},\mathbf{w},f_i(\mathbf{x})):
          (\mathbf{x},\mathbf{w})\in X_{\mathcal H}^i\bigr\},
            && i\in V,\\
        & x_p^{e,i}=x_p^{e,j},
            && e=ij\in E,\ p\in Q^e,\\
        & w_h^{e,i}=w_h^{e,j},
            && e=ij\in E,\ h\in\mathcal H^e,
\end{aligned}
\end{equation}
where $w_h^{e,i}=w_h^{e,j}$ are the encoding constraints. It is straightforward to see that Problem \eqref{eq:primal-characterization} is a relaxation of \eqref{eq:decomposed-problem}. Since we use a minimization convention, this implies $\mathrm{OPT}(\mathcal H)$ is a lower bound on $\mathrm{OPT}$. Note that, by projection, the convexification in \eqref{eq:primal-characterization} can be restricted to shared variables only.

When the encoding family is suitably selected and the block structure satisfies the tree assumptions, this relaxation is exact~\cite{cdx-ipco}. In particular, the paper~\cite{cdx-ipco} introduces two specific encoding families, termed the M- and V-encodings, which we formally define in Section~\ref{sec:strategies}. The following theorem summarizes their main exactness result.
\begin{theorem}[{\cite{cdx-ipco}}]\label{thm:mvstrong}
Suppose that the block interaction graph is a tree. If the encoding family $\mathcal H$ is chosen as either the complete M-encoding or the V-encoding, then $\mathrm{OPT}(\mathcal H) =\mathrm{OPT}$.
\end{theorem}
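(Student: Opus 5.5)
The easy inequality $\mathrm{OPT}(\mathcal H)\le \mathrm{OPT}$ already holds, since \eqref{eq:primal-characterization} relaxes \eqref{eq:decomposed-problem}. For the reverse, I take an optimal solution $(\mathbf{x}^i,\mathbf{w}^i,\theta_i)_{i\in V}$ of \eqref{eq:primal-characterization} and build from it a probability distribution over integer feasible solutions of \eqref{eq:decomposed-problem} whose expected cost equals $\sum_i\theta_i$. Then some integer solution has cost at most $\mathrm{OPT}(\mathcal H)$.

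First I reduce to the V-encoding. Here $\mathcal H^e$ is indexed by assignments $\mathbf{v}\in\B^{Q^e}$, with $\phi^e_{\mathbf v}(\mathbf x)=\mathbf 1[\mathbf x=\mathbf v]$. The complete M-encoding uses all products $\prod_{p\in S}x_p$ for $S\subseteq Q^e$. Each vertex indicator $\mathbf 1[\mathbf x=\mathbf v]=\prod_{p:v_p=1}x_p\prod_{p:v_p=0}(1-x_p)$ is a fixed linear combination of these monomials. So on $X^i_{\mathcal H}$ the vertex-indicator values are a linear image of the monomial values, and the same linear map applies on both endpoints of $e$. Convex hulls commute with linear maps, so equality of all monomial coordinates across $e$ implies equality of all indicator coordinates. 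Hence it suffices to prove the theorem for the V-encoding.

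Next, fix block $i$. Writing its component as a convex combination $\sum_k\lambda^i_k(\mathbf x^{i,k},\mathbf w^{i,k},f_i(\mathbf x^{i,k}))$ of points of $X^i_{\mathcal H}$ defines a probability distribution $\mu_i$ on $X^i$, and $\theta_i=\mathbb E_{\mu_i}[f_i]$. (If $\theta_i$ exceeds the combination value, we can lower it, which contradicts optimality; if $f_i$ is unbounded, we take the convex hull in its closed form and a finite support via Carathéodory.) Under the V-encoding, $w^{e,i}_{\mathbf v}=\mu_i(\mathbf x^{e,i}=\mathbf v)$. So the encoding constraints say exactly that the marginals of $\mu_i$ and $\mu_j$ on the boundary $Q^e$ coincide for each edge $e=ij$.

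Then I glue the distributions along the tree, as in the junction-tree construction. Root $T$ at some $r$, sample $\mathbf x^r\sim\mu_r$, and process the nodes in BFS order. For a child $j$ with parent $i$ on edge $e$, sample $\mathbf x^j$ from $\mu_j$ conditioned on $\mathbf x^{e,j}=\mathbf x^{e,i}$, which is well defined because the marginals agree. Induction along the order shows that each $\mathbf x^j$ has marginal $\mu_j$. The induction step needs only the single matching condition on the edge $e$ to the parent, because in the tree every node other than the root has a unique parent. Every sample satisfies $\mathbf x^j\in X^j$ and $\mathbf x^{e,i}=\mathbf x^{e,j}$, so it is feasible for \eqref{eq:decomposed-problem}. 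By linearity of expectation its expected cost is $\sum_i\mathbb E_{\mu_i}[f_i]=\sum_i\theta_i=\mathrm{OPT}(\mathcal H)$, which gives $\mathrm{OPT}\le\mathrm{OPT}(\mathcal H)$.

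I expect the main obstacle to be making the gluing step rigorous, in particular the claim that marginal consistency across each edge is enough. One must check that conditioning on the parent only, not on all earlier nodes, still preserves feasibility. This holds because $j$ interacts only with its parent along the path to the root, which is where the tree hypothesis is essential. A secondary technical point is the measure-theoretic care needed when the non-shared local variables are continuous. I would avoid it by working with finitely supported $\mu_i$ from Carathéodory, so that all conditionings are finite sums.
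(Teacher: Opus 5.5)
The paper does not prove Theorem~\ref{thm:mvstrong}; it imports it from \cite{cdx-ipco}. There is therefore no in-paper argument to match, and your proposal has to stand on its own. It does: the argument is correct.

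The M-to-V reduction is sound. The map $(\mathbf x,\mathbf w^{\mathrm M},\theta)\mapsto(\mathbf x,A\mathbf w^{\mathrm M},\theta)$ uses the same change-of-basis matrix $A$ at both endpoints of an edge. It carries each block's M-hull onto its V-hull and M-consistent points to V-consistent points, so $\mathrm{OPT}(\mathcal H_{\mathrm M})\ge\mathrm{OPT}(\mathcal H_{\mathrm V})$. This is the primal counterpart of how the paper itself derives Corollaries~\ref{cor:reflected-strong} and~\ref{cor:generalized-strong}: a span argument phrased on the Lagrangian side, and family containment. It also yields a general principle: once the V-case is proved, any family whose span together with the constants contains all vertex indicators of each boundary is exact on a tree.

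Your V-case is the classical junction-tree gluing, equivalently gluing the blocks' convex combinations grouped by boundary signature. Your justification of the key step is the right one. Every coupling row of \eqref{eq:decomposed-problem} is a parent--child edge of the rooted tree. Equality of the boundary marginals then does two things: it gives each conditioning event positive probability, and it keeps the marginal of $\mathbf x^j$ equal to $\mu_j$.

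Two small clean-ups. First, $\conv$ in \eqref{eq:primal-characterization} is the ordinary convex hull and $\theta_i$ is one of its coordinates. So each block point is already a finite convex combination with $\theta_i=\mathbb{E}_{\mu_i}[f_i]$ exactly. Drop the parenthetical about lowering $\theta_i$ and passing to a closed hull. It would not be a valid fix anyway: for points of a closed hull that lie outside the hull itself, Carath\'eodory gives no finite representation, and the finite-support gluing would break. Second, you do not need an optimal solution. The gluing applies to every feasible point of \eqref{eq:primal-characterization}, which also covers the case where the minimum is not attained, for example when non-shared variables are continuous.
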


The Dantzig-Wolfe representation of \eqref{eq:primal-characterization} is obtained by writing each point of $\conv(X_{\mathcal H}^i)$ as a convex combination of local integer columns. Let $\mathcal P^i$ be the set of feasible local columns of block $i$. A column $k\in\mathcal P^i$ consists of a local vector $\mathbf{x}^{i,k}\in X^i$, its restrictions (or subvector) $\mathbf{x}^{e,i,k}$ to the variables shared with neighboring blocks, for $e\in\delta(i)$, and the induced encoding entries $w_h^{e,i,k}=\phi_h^e(\mathbf{x}^{e,i,k})$ for $e\in\delta(i)$ and $h\in\mathcal H^e$. Let $c_{ik}:=f_i(\mathbf{x}^{i,k})$ be the scalar cost of column $k$ of block $i$, and let $\lambda_{ik}$ be its convex weight. The complete master problem is
\begin{equation}
\label{eq:complete-master}
\begin{aligned}
    \min \quad
        & \sum_{i\in V}\sum_{k\in\mathcal P^i} c_{ik}\lambda_{ik}\\
    \text{s.t.}\quad
        & \sum_{k\in\mathcal P^i}\lambda_{ik}=1,
            && i\in V,\\
        & \sum_{k\in\mathcal P^i}x_p^{e,i,k}\lambda_{ik}
          -\sum_{k\in\mathcal P^j}x_p^{e,j,k}\lambda_{jk}=0,
            && e=ij\in E,\ p\in Q^e,\\
        & \sum_{k\in\mathcal P^i}w_h^{e,i,k}\lambda_{ik}
          -\sum_{k\in\mathcal P^j}w_h^{e,j,k}\lambda_{jk}=0,
            && e=ij\in E,\ h\in\mathcal H^e,\\
        & \lambda_{ik}\ge 0,
            && i\in V,\ k\in\mathcal P^i.
\end{aligned}
\end{equation}
The first constraints are the convexity constraints. The second group enforces consistency of the original shared variables. The third group enforces consistency of the selected edge encodings, that is, these are the encoding constraints. In exact formulations, either the column sets $\mathcal P^i$, the encoding sets $\mathcal H^e$, or both are exponentially large. The implementation therefore solves \eqref{eq:complete-master} by generating columns and encoding rows dynamically as discussed in the next section.

\section{Column-and-row generation algorithm}
\label{sec:row-column-generation}

At any iteration, the algorithm maintains a restricted set of columns $\mathcal K^i\subseteq\mathcal P^i$ for each block and a restricted set of encoding rows $\mathcal C^e\subseteq\mathcal H^e$ for each edge. The restricted master problem is obtained from \eqref{eq:complete-master} by replacing $\mathcal P^i$ by $\mathcal K^i$ and $\mathcal H^e$ by $\mathcal C^e$:
\begin{equation}
\label{eq:rmp}
\begin{aligned}
    z_{\mathrm{RMP}}:=\min \quad
        & \sum_{i\in V}\sum_{k\in\mathcal K^i} c_{ik}\lambda_{ik}\\
    \text{s.t.}\quad
        & \sum_{k\in\mathcal K^i}\lambda_{ik}=1,
            && i\in V,\\
        & \sum_{k\in\mathcal K^i}x_p^{e,i,k}\lambda_{ik}
          -\sum_{k\in\mathcal K^j}x_p^{e,j,k}\lambda_{jk}=0,
            && e=ij\in E,\ p\in Q^e,\\
        & \sum_{k\in\mathcal K^i}w_h^{e,i,k}\lambda_{ik}
          -\sum_{k\in\mathcal K^j}w_h^{e,j,k}\lambda_{jk}=0,
            && e=ij\in E,\ h\in\mathcal C^e,\\
        & \lambda_{ik}\ge 0,
            && i\in V,\ k\in\mathcal K^i.
\end{aligned}
\end{equation}
The set $\mathcal C^e$ is initially empty or small, so that the first restricted master may contain only the original linear coupling rows. As the algorithm proceeds, new encoding rows are added to $\mathcal C^e$, and future columns receive their coefficients through the same functions $\phi_h^e$.

\subsection{Column generation for fixed encoding rows}
\label{subsec:pricing}

Let $\alpha_i$ be the dual multiplier of the convexity constraint of block $i$. For each edge $e=ij\in E$, let $\pi_p^e$ be the dual multiplier of the original coupling row for component $p\in Q^e$, and let $\mu_h^e$ be the dual multiplier of the generated encoding row indexed by $h\in\mathcal C^e$. We orient each edge as written, so the corresponding rows in \eqref{eq:rmp} have the form ``block $i$ minus block $j$''. Define
$$\tau_i^e:=
    \begin{cases}
        +1, & \text{if } e=ij \text{ and } i \text{ is the first endpoint},\\
        -1, & \text{if } e=ji \text{ and } i \text{ is the second endpoint}.
    \end{cases}
$$
For a fixed row set $\mathcal C=\{\mathcal C^e\}_{e\in E}$, the pricing problem for block $i$ is
\begin{equation}
\label{eq:pricing}
\rho_i:= \min_{\mathbf{x}^i\in X^i} \left\{
\begin{aligned}
&f_i(\mathbf{x}^i)-\alpha_i\\
&\quad- \sum_{e\in\delta(i)}\tau_i^e \left(\sum_{p\in Q^e}\pi_p^e x_p^{e,i} +\sum_{h\in\mathcal  C^e}\mu_h^e\phi_h^e(\mathbf{x}^{e,i}) \right)
\end{aligned}
\right\}.
\end{equation}
The value $\rho_i$ is the minimum reduced cost among all columns of block $i$ with respect to the current master. If $\rho_i< -\eps_{\mathrm{col}}$, an optimizer of \eqref{eq:pricing} defines a new column and is added to $\mathcal K^i$. The column-generation phase for the current row set terminates when $\rho_i\ge -\eps_{\mathrm{col}}$ for every $i\in V$.

Solving the pricing problem once and adding any negative reduced-cost columns to the restricted master constitutes one \emph{inner iteration}. When no negative reduced-cost column remains, \eqref{eq:rmp} solves the full Dantzig-Wolfe master corresponding to the current encoding row set $\mathcal C$, thereby completing one \emph{outer iteration}. Since this master is a relaxation of~\eqref{eq:block-opt}, its optimal objective value is a valid lower bound on~$\mathrm{OPT}$. The next outer iteration begins by separating violated encoding constraints and adding those that are identified, as described in the next section. The updated master is then re-solved via column generation, completing the outer iteration.

\subsection{Row generation by encoding function separation}
\label{subsec:row-generation}

After column generation converges for the current encoding constraint  set, the algorithm checks whether the current master solution violates any encoding equality that has not yet been added. For an edge $e=ij\in E$, the master solution assigns a nonnegative weight $\lambda_{ik}$ to each column of block $i$, with the weights summing to one. Thus, the master solution can be viewed as a probability distribution over the columns of each block. For a signature $\mathbf{s}\in\B^{Q^e}$ of the shared variables $Q^e$, let $\nu_{\mathbf{s}}^{e,i}$ denote the total weight of columns of block $i$ whose shared variables take the value $\mathbf{s}$. Similarly, define $\nu_{\mathbf{s}}^{e,j}$ for block $j$. We refer to the collections $\nu^{e,i}$ and $\nu^{e,j}$ as the boundary distributions associated with edge $e$:
\begin{equation}
\label{eq:marginals}
    \nu_{\mathbf{s}}^{e,i} :=\sum_{\substack{k\in\mathcal K^i:\,\mathbf{x}^{e,i,k}=\mathbf{s}}} \lambda_{ik}, \qquad
    \nu_{\mathbf{s}}^{e,j} :=\sum_{\substack{k\in\mathcal K^j:\,\mathbf{x}^{e,j,k}=\mathbf{s}}} \lambda_{jk}, \qquad \mathbf{s}\in\B^{Q^e}.
\end{equation}

For a candidate encoding $h\in\mathcal H^e$, the violation of its consistency row is the difference $\Delta_h^e := \sum_{\mathbf{s}\in\B^{Q^e}} \phi_h^e(\mathbf{s}) \left(\nu_{\mathbf{s}}^{e,i}-\nu_{\mathbf{s}}^{e,j}\right)$. If $|\Delta_h^e|>\eps_{\mathrm{row}}$ and $h\notin\mathcal C^e$, the algorithm adds the row
\begin{equation}
\label{eq:generated-row}
    \sum_{k\in\mathcal K^i}\phi_h^e(\mathbf{x}^{e,i,k})\lambda_{ik}  -   \sum_{k\in\mathcal K^j}\phi_h^e(\mathbf{x}^{e,j,k})\lambda_{jk}=0.
\end{equation}
The separation oracle need not identify all violated rows. It may instead return only a subset, such as the most violated encoding rows on each edge or a fixed number of encoding rows per iteration.

\subsection{Overall algorithm}
\label{subsec:algorithm-flow}

The resulting procedure alternates between column generation (inner iteration) and row generation (outer iteration). A high-level description is given below.
\begin{enumerate}[label=\arabic*.]
\item Initialize nonempty column sets $\mathcal K^i$ for all $i\in V$ and initial encoding sets $\mathcal C^e$ for all $e\in E$. In the implementation, initial columns may be obtained from a monolithic solution, when available, by projecting it onto the blocks.

\item Solve the restricted master problem \eqref{eq:rmp} and extract the dual multipliers $\alpha_i$, $\pi_p^e$, and $\mu_h^e$.

\item For each block $i\in V$, solve the pricing problem     \eqref{eq:pricing}. Add every column whose reduced cost is smaller than $-\eps_{\mathrm{col}}$.

\item Repeat Steps 2--3 until no block generates a negative reduced-cost column.

\item For every edge $e=ij\in E$, compute the boundary distributions \eqref{eq:marginals}. Apply the selected separation strategy to find encodings $h\in\mathcal H^e\setminus\mathcal C^e$ with    $|\Delta_h^e|>\eps_{\mathrm{row}}$.

\item Add the selected rows \eqref{eq:generated-row} to the master and update the pricing models so that the corresponding terms appear in \eqref{eq:pricing}.

\item Terminate if a full outer iteration adds neither rows nor columns in the corresponding inner iteration, or if an external stopping rule such as a time limit or relative gap tolerance is met. Otherwise, return to Step 2.
\end{enumerate}

When the algorithm terminates because neither columns nor rows corresponding to encoding constraints are generated, then the current master solves exactly the relaxation defined by the generated encoding constraints. If the separation routine is exact for a complete encoding family which satisfies the assumptions for the strong-duality Theorem~\ref{thm:mvstrong}, this relaxation is exact and the resulting bound equals OPT. Otherwise, the algorithm returns the bound corresponding to the generated subset of encoding constraints.

\section{Specific encoding strategies}
\label{sec:strategies}

The master problem and pricing framework are agnostic to the choice of encoding. An encoding strategy is specified by a family of functions $\phi_h^e$, a separation oracle that identifies violated encoding constraints, and a pricing formulation that evaluates $\phi_h^e(\mathbf{x}^{e,i})$ in \eqref{eq:pricing}. We describe four strategies: Vertex, Monomial, Reflected, and Generalized.

\subsection{Vertex strategy}
\label{subsec:v-strategy}

The Vertex (or V) encoding was introduced in~\cite{cdx-ipco}. For each signature vector $\mathbf{s}\in\B^{Q^e}$, define $\phi_{\mathbf{s}}^{V,e}(\mathbf{x}^{e,i}):=\mathbf{1}\{\mathbf{x}^{e,i}=\mathbf{s}\}$, where $\mathbf{1}\{\cdot\}$ is the indicator function. The corresponding encoding row is 
$$\sum_{k\in\mathcal K^i}\mathbf{1}\{\mathbf{x}^{e,i,k}=\mathbf{s}\}\lambda_{ik} - \sum_{k\in\mathcal K^j}\mathbf{1}\{ \mathbf{x}^{e,j,k}=\mathbf{s}\}\lambda_{jk}=0,$$
or, equivalently, $\nu_{\mathbf{s}}^{e,i}=\nu_{\mathbf{s}}^{e,j}$. Thus, separation is immediate: compute the two marginal distributions on edge $e$ and add any signatures for which $|\nu_{\mathbf{s}}^{e,i}-\nu_{\mathbf{s}}^{e,j}|>\eps_{\mathrm{row}}$.

In the pricing problem, the indicator $\mathbf{1}\{\mathbf{x}^{e,i}=\mathbf{s}\}$ can be represented by a binary variable $w_{\mathbf{s}}^{e,i}$. For a fixed signature $\mathbf{s}$, the standard linearization is
\begin{align*}
    w_{\mathbf{s}}^{e,i} &\le x_p^{e,i},
        && p\in Q^e:\ s_p=1, \\
    w_{\mathbf{s}}^{e,i} &\le 1-x_p^{e,i},
        && p\in Q^e:\ s_p=0, \\
    w_{\mathbf{s}}^{e,i}
        &\ge
        \sum_{p\in Q^e:\,s_p=1}x_p^{e,i}
        +\sum_{p\in Q^e:\,s_p=0}(1-x_p^{e,i})
        -|Q^e|+1. \label{eq:v-lin-lower}
\end{align*}

\subsection{Monomial strategy}
\label{subsec:m-strategy}
The Monomial (or M) encoding was introduced in~\cite{cdx-ipco}. The Monomial strategy uses monomial encodings on the shared variables. For each subset $S\subseteq Q^e$, define $\phi_S^{\mathrm{M},e}(\mathbf{x}^{e,i}):=\prod_{p\in S}x_p^{e,i},$ with the convention that $\phi_\emptyset^{\mathrm{M},e}\equiv 1$. The row associated with $S$ enforces equality of the corresponding ``moment":
\begin{equation*}
    \sum_{k\in\mathcal K^i}
        \left(\prod_{p\in S}x_p^{e,i,k}\right)\lambda_{ik}
    -
    \sum_{k\in\mathcal K^j}
        \left(\prod_{p\in S}x_p^{e,j,k}\right)\lambda_{jk}=0.
\end{equation*}
In the exact version of the Monomial strategy, separation is over all subsets $S\subseteq Q^e$. Given the current master solution, the most violated row in one direction can be found by solving
\begin{equation}
\label{eq:m-separation-objective}
    \max_{S\subseteq Q^e}
    \left\{
    \sum_{k\in\mathcal K^i}\lambda_{ik} w_S^{e,i,k}
    -
    \sum_{k\in\mathcal K^j}\lambda_{jk} w_S^{e,j,k}
    \right\},
\end{equation}
where $w_S^{e,i,k}:=\prod_{p\in S}x_p^{e,i,k}$ and $w_S^{e,j,k}:=\prod_{p\in S}x_p^{e,j,k}$. A symmetric problem with the two endpoints interchanged is also solved, that is maximizing $\sum_{k\in\mathcal K^j}\lambda_{jk} w_S^{e,j,k}-\sum_{k\in\mathcal K^i}\lambda_{ik} w_S^{e,i,k}$, and corresponding rows with  violation are added.

A compact integer programming model for \eqref{eq:m-separation-objective} uses a binary variable $z_p^e$ to indicate whether $p$ is selected in $S$, and binary variables $u_k^{e,i}$ and $u_k^{e,j}$ to indicate whether the selected monomial is active in column $k$ on the two endpoints. One such model is:
\begin{align*}
    \max\quad
        & \sum_{k\in\mathcal K^i}\lambda_{ik}u_k^{e,i}
          -\sum_{k\in\mathcal K^j}\lambda_{jk}u_k^{e,j} \\
    \text{s.t.}\quad
        & u_k^{e,l}\le x_p^{e,l,k}+1-z_p^e,
            && l\in\{i,j\},\ k\in\mathcal K^l,\ p\in Q^e, \\
        & \sum_{p\in Q^e}x_p^{e,l,k}z_p^e
          \le \sum_{p\in Q^e}z_p^e-1+u_k^{e,l},
            && l\in\{i,j\},\ k\in\mathcal K^l, \\
        & z_p^e\in\B,
            && p\in Q^e,\\
        & u_k^{e,l}\in\B,
            && l\in\{i,j\},\ k\in\mathcal K^l.
\end{align*}

In the pricing problem, once a subset $S$ has been selected, the monomial is linearized by introducing $w_S^{e,i}$ and imposing
\begin{align*}
    w_S^{e,i} &\le x_p^{e,i},
        && p\in S, \\
    w_S^{e,i} &\ge \sum_{p\in S}x_p^{e,i}-|S|+1.
\end{align*}

\subsection{Reflected strategy}
\label{subsec:reflected-strategy}

The Reflected strategy applies the Monomial strategy to complements of the shared variables. For each subset $S\subseteq Q^e$, define $\phi_S^{\mathrm{R},e}(\mathbf{x}^{e,i}):=\prod_{p\in S}(1-x_p^{e,i})$, with the convention that $\phi_\emptyset^{\mathrm{R},e}\equiv 1$. The associated row matches the mass of columns in which all variables in $S$ are equal to zero:
\begin{equation*}
\label{eq:reflected-row}
    \sum_{k\in\mathcal K^i}
        \left(\prod_{p\in S}(1-x_p^{e,i,k})\right)\lambda_{ik}
    -
    \sum_{k\in\mathcal K^j}
        \left(\prod_{p\in S}(1-x_p^{e,j,k})\right)\lambda_{jk}=0.
\end{equation*}
Separation is identical to the Monomial separation problem after replacing every constant $x_p^{e,i,k}$ by $1-x_p^{e,i,k}$ and every $x_p^{e,j,k}$ by $1-x_p^{e,j,k}$. Thus the same integer programming separation model can be used on complemented boundary signatures. The modification of the pricing problem is analogous.


\paragraph{Connection to Monomial encoding.} Observe the following standard result.
\begin{lemma}\label{lem:refM}
For every subset $S \subseteq Q^e$, we have that $\phi_S^{\mathrm{M},e}(x) = \sum_{U\subseteq S} (-1)^{|U|}\phi_U^{\mathrm{R},e}(x)$ and $\phi_S^{\mathrm{R},e}(x) = \sum_{U\subseteq S}(-1)^{|U|}\phi_U^{\mathrm{M},e}(x)$. Therefore, the Monomial and Reflected encoding families span the same vector space of functions on $\{0,1\}^{Q^e}$.
\end{lemma}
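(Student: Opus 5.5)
The plan is to expand products directly. For the second identity, fix $x\in\{0,1\}^{Q^e}$ and $S\subseteq Q^e$, and expand
\[
\phi_S^{\mathrm{R},e}(x)=\prod_{p\in S}(1-x_p)
\]
by distributivity: each factor contributes either $1$ or $-x_p$. Choosing the factors indexed by $U\subseteq S$ to contribute $-x_p$ gives the term $(-1)^{|U|}\prod_{p\in U}x_p=(-1)^{|U|}\phi_U^{\mathrm{M},e}(x)$. Summing over all $U\subseteq S$ yields the claimed formula. The empty set corresponds to the constant $1$, which is consistent with the convention $\phi_\emptyset^{\mathrm{M},e}\equiv 1$.

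The first identity is the same computation with the roles reversed. Write $x_p=1-(1-x_p)$ and expand
\[
\prod_{p\in S}\bigl(1-(1-x_p)\bigr)=\sum_{U\subseteq S}(-1)^{|U|}\prod_{p\in U}(1-x_p)=\sum_{U\subseteq S}(-1)^{|U|}\phi_U^{\mathrm{R},e}(x).
\]
Equivalently, one can note that the map $x\mapsto \mathbf{1}-x$ interchanges the two families, so the first identity follows from the second by substitution. Alternatively, it follows by M\"obius inversion on the Boolean lattice of subsets of $S$.

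For the span statement, the identities show that every Monomial function is a linear combination of Reflected functions and vice versa. Hence
\[
\operatorname{span}\{\phi^{\mathrm{M},e}_S\}_{S\subseteq Q^e}\subseteq\operatorname{span}\{\phi^{\mathrm{R},e}_S\}_{S\subseteq Q^e},
\]
and the reverse inclusion holds by the same argument, so the two spans coincide. In fact both span the full $2^{|Q^e|}$-dimensional space of functions on $\{0,1\}^{Q^e}$, since the monomials are linearly independent there by uniqueness of the multilinear representation. This stronger fact is not needed for the lemma.

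There is no real obstacle here. The only points needing care are the sign bookkeeping in the expansion and the empty-set conventions for both families.
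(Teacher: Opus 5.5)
Your proposal is correct and follows the paper's own argument: expand $\prod_{p\in S}(1-x_p)$ and $\prod_{p\in S}\bigl(1-(1-x_p)\bigr)$ by distributivity and collect the signed terms indexed by $U\subseteq S$. Your extra remarks, that complementation $x\mapsto\mathbf 1-x$ swaps the two families and that both families in fact span all functions on $\{0,1\}^{Q^e}$, are also correct.
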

\begin{proof}
Both identities follow by expanding $\prod_{p\in S}(1-(1-x_p))$ and $\prod_{p\in S}(1-x_p)$ and collecting terms by inclusion--exclusion.
\end{proof}

In other words, every monomial can be expressed as a linear combination of reflected monomials, and vice versa. Therefore, any term of the form $\phi_S^e(x)$ appearing in the objective function of the Lagrangian relaxation of~\eqref{eq:decomposed-problem1}, when using monomial encoding, can be represented exactly using reflected monomial terms. Since the monomial and reflected encoding families span the same function space on $\{0,1\}^{Q^e}$, Theorem~\ref{thm:mvstrong} immediately yields the following corollary.

\begin{corollary}
\label{cor:reflected-strong}
Suppose that the block interaction graph is a tree. If the encoding family $\mathcal H$ is chosen as the complete Reflected encoding, then $\mathrm{OPT}(\mathcal H)=\mathrm{OPT}$.
\end{corollary}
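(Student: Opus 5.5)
The plan is to show that the relaxation $\mathrm{OPT}(\mathcal H)$ in \eqref{eq:primal-characterization} depends on $\mathcal H^e$ only through the linear span of the encoding functions $\{\phi_h^e\}_{h\in\mathcal H^e}$ on $\B^{Q^e}$. Then we invoke Lemma~\ref{lem:refM} to replace the complete Reflected family by the complete Monomial family, and conclude with Theorem~\ref{thm:mvstrong}.

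\textbf{Step 1: the relaxation depends only on the span.} Work with the boundary-distribution view of Section~\ref{subsec:row-generation}. A point of $\conv\{(\mathbf{x},\mathbf{w},f_i(\mathbf{x}))\}$ is a convex combination of lifted local points. For each edge $e=ij$ it therefore induces a distribution $\nu^{e,i}$ on $\B^{Q^e}$, and the aggregated encoding entry is $w_h^{e,i}=\sum_{\mathbf{s}}\phi_h^e(\mathbf{s})\nu^{e,i}_{\mathbf{s}}$. Hence the encoding constraint for $h$ reads $\sum_{\mathbf{s}}\phi_h^e(\mathbf{s})(\nu^{e,i}_{\mathbf{s}}-\nu^{e,j}_{\mathbf{s}})=0$, which is linear in $\phi_h^e$. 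Two families with the same span therefore impose exactly the same set of constraints on the pair $(\nu^{e,i},\nu^{e,j})$. The original coupling rows $x_p^{e,i}=x_p^{e,j}$ are common to both formulations. The objective $\sum_i\theta_i$ depends only on the convex weights over local points $\mathbf{x}^i$, not on the $\mathbf{w}$-coordinates. So the feasible sets, projected onto the convex multipliers over $X^i$, coincide, and $\mathrm{OPT}(\mathcal H^{\mathrm R})=\mathrm{OPT}(\mathcal H^{\mathrm M})$. Equivalently, one can argue through the complete master \eqref{eq:complete-master}: each reflected row is a signed sum of monomial rows, and vice versa.

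\textbf{Step 2: apply the earlier results.} By Lemma~\ref{lem:refM}, the complete Reflected family $\{\phi_S^{\mathrm R,e}\}_{S\subseteq Q^e}$ and the complete Monomial family $\{\phi_S^{\mathrm M,e}\}_{S\subseteq Q^e}$ span the same space, edge by edge. By Step~1 the two relaxations are equal. Theorem~\ref{thm:mvstrong} gives $\mathrm{OPT}(\mathcal H^{\mathrm M})=\mathrm{OPT}$ on a tree, so $\mathrm{OPT}(\mathcal H^{\mathrm R})=\mathrm{OPT}$.

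\textbf{Main obstacle.} The one point needing care is Step~1. The lifted sets $X^i_{\mathcal H}$ carry different $\mathbf{w}$-coordinates for the two families, so the two convex programs are not literally the same. I would handle this by noting that the $\mathbf{w}$-vectors under either family are a linear image of the boundary distribution $\nu^{e,i}$, and by projecting both programs onto the convex weights $\lambda_{ik}$ as in \eqref{eq:complete-master}. In that space the constraint systems are related by an invertible linear transformation: the inclusion--exclusion matrix of Lemma~\ref{lem:refM}, which is unitriangular over the subset lattice. Hence they have identical solution sets.
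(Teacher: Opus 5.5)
Your proof is correct and takes the same approach as the paper: Lemma~\ref{lem:refM} shows that the complete Reflected and complete Monomial families span the same function space on each edge, and Theorem~\ref{thm:mvstrong} then transfers exactness. The paper states the span argument on the Lagrangian side (each monomial term in the dual objective is rewritten in reflected terms), while you check it directly on \eqref{eq:primal-characterization} by passing to convex weights, which is the more careful version since that is how $\mathrm{OPT}(\mathcal H)$ is defined (minor nit: the inclusion--exclusion matrix has diagonal entries $(-1)^{|S|}$ rather than $1$, but you only need invertibility).
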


\subsection{Generalized strategy}
\label{subsec:generalized-strategy}

The Generalized strategy generalizes all the above strategies. Let $S^+,S^-\subseteq Q^e$ be disjoint sets. Coordinates in $S^+$ get a monomial term, and coordinates in $S^-$ get a reflected term, and all other coordinates are left free. Thus, the encoding is $\phi_{S^+,S^-}^{\mathrm{G},e}(\mathbf{x}^{e,i}):=\prod_{p\in S^+}x_p^{e,i}\prod_{p\in S^-}(1-x_p^{e,i})$, with the convention that $\phi_{\emptyset,\emptyset}^{\mathrm{G},e}\equiv 1$. This family contains the Monomial strategy when $S^-=\emptyset$ and the Reflected strategy when $S^+=\emptyset$. If $S^+\cup S^-=Q^e$, it recovers a full signature indicator of the Vertex strategy. Since these encoding functions generalize all the previous families, we obtain the following:
\begin{corollary}
\label{cor:generalized-strong}
Suppose that the block interaction graph is a tree. If the encoding family $\mathcal H$ is chosen as the complete Generalized encoding, then $\mathrm{OPT}(\mathcal H)=\mathrm{OPT}$.
\end{corollary}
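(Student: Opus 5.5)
The plan is to reduce the corollary to Theorem~\ref{thm:mvstrong} through a monotonicity argument: enlarging the encoding family can only strengthen the relaxation, while $\mathrm{OPT}(\mathcal H)$ never exceeds $\mathrm{OPT}$. Let $\mathcal H^{\mathrm G}$ denote the complete Generalized family on every edge and $\mathcal H^{\mathrm V}$ the V-encoding. By construction, each Vertex function $\phi^{V,e}_{\mathbf s}$ equals $\phi^{\mathrm G,e}_{S^+,S^-}$ with $S^+=\{p: s_p=1\}$ and $S^-=\{p:s_p=0\}$. Hence, on every edge, $\mathcal H^{\mathrm V,e}$ is a subfamily of $\mathcal H^{\mathrm G,e}$; the same holds for the complete Monomial family, obtained with $S^-=\emptyset$.

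\textbf{Step 1 (monotonicity).} Suppose $\mathcal H\subseteq\mathcal H'$ edgewise. I will show $\mathrm{OPT}(\mathcal H)\le \mathrm{OPT}(\mathcal H')$. Take any feasible point of \eqref{eq:primal-characterization} for $\mathcal H'$ and project out the coordinates $w_h$ with $h\in\mathcal H'\setminus\mathcal H$. Projection commutes with convex hull, so the projection of $\conv\{(\mathbf x,\mathbf w',f_i(\mathbf x)):(\mathbf x,\mathbf w')\in X^i_{\mathcal H'}\}$ is exactly $\conv\{(\mathbf x,\mathbf w,f_i(\mathbf x)):(\mathbf x,\mathbf w)\in X^i_{\mathcal H}\}$, because the $\mathcal H$-coordinates of a lifted point are the same functions $\phi^e_h$ of $\mathbf x$. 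The linear coupling constraints and the encoding equalities for $h\in\mathcal H$ are kept, and the objective $\sum_i\theta_i$ is unchanged. The projected point is therefore feasible for $\mathcal H$ with the same value, which gives the inequality.

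\textbf{Step 2 (upper bound).} As the paper already notes, \eqref{eq:primal-characterization} is a relaxation of \eqref{eq:decomposed-problem}. Any feasible integer solution lifts, via $w^{e,i}_h=\phi^e_h(\mathbf x^{e,i})$, to a feasible point whose encoding equalities hold because $\mathbf x^{e,i}=\mathbf x^{e,j}$. Thus $\mathrm{OPT}(\mathcal H^{\mathrm G})\le\mathrm{OPT}$.

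\textbf{Step 3 (conclusion).} Steps 1 and 2 combined with Theorem~\ref{thm:mvstrong} give
\[
\mathrm{OPT}=\mathrm{OPT}(\mathcal H^{\mathrm V})\le\mathrm{OPT}(\mathcal H^{\mathrm G})\le\mathrm{OPT},
\]
so $\mathrm{OPT}(\mathcal H^{\mathrm G})=\mathrm{OPT}$. Using the complete Monomial family as the sandwiching subfamily would work equally well.

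The only point needing care is the projection identity in Step 1, namely that the projection of the convex hull of the lifted set equals the convex hull of the projected set. This holds because coordinate projection is linear, and linear images commute with convex hulls. If $X^i$ contains continuous variables, $\conv$ is taken as written (not closed), so no closure issue arises.
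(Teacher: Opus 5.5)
Your proposal is correct and follows the paper's own argument. The paper also deduces the corollary from Theorem~\ref{thm:mvstrong} by observing that the Generalized family contains the Vertex (and Monomial) encodings. You additionally spell out the two facts the paper leaves implicit: $\mathrm{OPT}(\mathcal H)$ is monotone in $\mathcal H$, because coordinate projection commutes with convex hulls, and $\mathrm{OPT}(\mathcal H)\le\mathrm{OPT}$ always holds.
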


A direct separation model uses binary variables $z_p^{e,+}$ and $z_p^{e,-}$ to indicate whether coordinate $p$ is fixed to one or to zero. For each current column, introduce binary variables $u_k^{e,i}$ and $u_k^{e,j}$ indicating whether the partial assignment is satisfied. The model maximizing the violation from endpoint $i$ to endpoint $j$ is
\begin{align*}
    \max\quad
        & \sum_{k\in\mathcal K^i}\lambda_{ik}u_k^{e,i}
          -\sum_{k\in\mathcal K^j}\lambda_{jk}u_k^{e,j} \label{eq:g-sep-obj}\\
    \text{s.t.}\quad
        & u_k^{e,l}\le x_p^{e,l,k}+1-z_p^{e,+},
            && l\in\{i,j\},\ k\in\mathcal K^l,\ p\in Q^e,\\
        & \sum_{p\in Q^e}x_p^{e,l,k}z_p^{e,+}
          +\sum_{p\in Q^e}(1-x_p^{e,l,k})z_p^{e,-}
          \le
          \sum_{p\in Q^e}(z_p^{e,+}+z_p^{e,-})-1+u_k^{e,l},
            && l\in\{i,j\},\ k\in\mathcal K^l,\\
        & z_p^{e,+}+z_p^{e,-}\le 1, &&p\in Q^e,\\
        & z_p^{e,+},z_p^{e,-}\in\B,
            && p\in Q^e,\\
        & u_k^{e,l}\in\B,
            && l\in\{i,j\},\ k\in\mathcal K^l.
\end{align*}
As in the Monomial strategy, the sign-reversed separation problem is also solved.

For pricing, once a partial pattern $(S^+,S^-)$ has been selected, introduce a binary variable $w_{S^+,S^-}^{e,i}$ and impose
\begin{align*}
    w_{S^+,S^-}^{e,i} &\le x_p^{e,i},
        && p\in S^+, \\
    w_{S^+,S^-}^{e,i} &\le 1-x_p^{e,i},
        && p\in S^-, \\
    w_{S^+,S^-}^{e,i}
        &\ge
        \sum_{p\in S^+}x_p^{e,i}
        +\sum_{p\in S^-}(1-x_p^{e,i})
        -|S^+|-|S^-|+1. \\
\end{align*}

\section{Comparison of encoding rules}
\label{sec:encoding-comparison}

Recall the four encoding families on the shared variables $\mathbf x\in\{0,1\}^{Q}$ of an edge; we suppress the edge superscript here for simplicity. For disjoint $S^{+},S^{-}\subseteq Q$, the \emph{Generalized} encoding function is $\phi^{G}_{S^{+},S^{-}}(\mathbf x)\;:=\;\prod_{p\in S^{+}}x_{p}\,\prod_{p\in S^{-}}(1-x_{p})$, and the remaining families are its special cases: the \emph{Monomial} function $\phi_{S}(\mathbf x):=\prod_{p\in S}x_{p}$ has $S^{-}=\emptyset$, the \emph{Reflected} function $\phi^{R}_{S}(\mathbf x):=\prod_{p\in S}(1-x_{p})$ has $S^{+}=\emptyset$, and the \emph{Vertex} function, which is a point indicator $\phi_{\mathbf s}(\mathbf x):=\mathbf 1\{\mathbf x= \mathbf s\}$, has $S^{+}\cup S^{-}=Q$. Thus as discussed before, every Vertex, Monomial, or Reflected encoding function is a Generalized one, and the Generalized family contains the other three.

It is therefore straightforward to see that the above containment is inherited by the row counts of column-and-row generation.

\begin{proposition}\label{prop:gen-dominates}
On any instance, the least number of Generalized encoding rows that closes the gap is at most the least number of Vertex, of Monomial, and of Reflected encoding rows that closes the gap.
\end{proposition}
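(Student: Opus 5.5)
The argument is a direct containment argument, and I would first pin down what ``number of rows that closes the gap'' means. For a family $\mathcal F\in\{V,M,R,G\}$ on a fixed instance, a \emph{closing row set} is a choice of subfamilies $\mathcal C=\{\mathcal C^e\}_{e\in E}$ with $\mathcal C^e\subseteq\mathcal H^{\mathcal F,e}$ such that $\mathrm{OPT}(\mathcal C)=\mathrm{OPT}$. Its size is $\sum_e|\mathcal C^e|$, and the least such size is the quantity compared in Proposition~\ref{prop:gen-dominates}. By Theorem~\ref{thm:mvstrong}, Corollary~\ref{cor:reflected-strong} and Corollary~\ref{cor:generalized-strong}, each family admits at least one closing row set, namely the complete one, so every minimum is a well-defined finite number.

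The key step is to embed each family's functions into the Generalized family. Following the opening paragraph of Section~\ref{sec:encoding-comparison}, the embedding on each edge is
\[
\phi_S=\phi^{G}_{S,\emptyset},\qquad \phi^{R}_S=\phi^{G}_{\emptyset,S},\qquad \phi_{\mathbf s}=\phi^{G}_{\{p:s_p=1\},\{p:s_p=0\}}.
\]
Each identity holds as a function on $\{0,1\}^{Q^e}$. The maps $S\mapsto(S,\emptyset)$, $S\mapsto(\emptyset,S)$ and $\mathbf s\mapsto(\{p:s_p=1\},\{p:s_p=0\})$ are injective, so a set of $N$ Vertex, Monomial or Reflected rows maps to exactly $N$ distinct Generalized rows.

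Next I observe that $\mathrm{OPT}(\mathcal C)$ depends only on the encoding functions themselves, not on the family label. Problem~\eqref{eq:primal-characterization} is built solely from the functions $\phi_h^e$, $h\in\mathcal C^e$, through the lifted sets $X^i_{\mathcal C}$ and the equalities $w_h^{e,i}=w_h^{e,j}$. Hence the image of a closing row set of family $\mathcal F$ under the embedding yields literally the same optimization problem, with the same value $\mathrm{OPT}$. It is therefore a Generalized closing row set of the same cardinality, and taking $\mathcal F$'s minimum-size closing set gives the inequality for each of the three families separately.

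The only point needing care, which I regard as the main, though mild, obstacle, is whether ``closes the gap'' refers to the relaxation value $\mathrm{OPT}(\mathcal C)$ or to the outcome of a particular run of the CRG algorithm. A run depends on separation heuristics and tie-breaking, so the proposition must be read as referring to the minimum over row sets. Under that reading the argument above is complete. If one insists on the algorithmic reading, I would note that an oracle for Generalized rows may simply return the embedded images of the rows an oracle for $\mathcal F$ would return, since these are violated rows of $\mathcal H^{G,e}$. The run then reproduces the $\mathcal F$-run exactly, with the same masters, duals and pricing problems, and the same inequality follows.
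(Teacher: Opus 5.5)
Your proposal is correct and matches the paper's argument. The paper justifies Proposition~\ref{prop:gen-dominates} in one line: every Vertex, Monomial, and Reflected encoding function is literally a Generalized one, so any closing row set from a sub-family is already a Generalized closing row set of the same size. Your additional points (injectivity of the embedding, the fact that $\mathrm{OPT}(\mathcal C)$ depends only on the functions, and the algorithmic reading) are sound elaborations of that same containment. Invoking the exactness theorems for finiteness is harmless but unnecessary, since the inequality holds trivially when a sub-family has no closing set.
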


Proposition~\ref{prop:gen-dominates} leaves two questions. Are the three sub-families themselves ordered, or can each beat another? And is the domination of the Generalized family strict, or is the Generalized family efficient on every tractable block? This section answers both, through the results stated next. In each, the instance is a single oriented edge whose two endpoint blocks admit exact linear-optimization oracles of size polynomial in the number of boundary variables and whose coupled set is the singleton $\{(0,\mathbf 0)\}$, so that $\mathrm{OPT}=0$; we call such an instance \emph{tractable}. The constructions and proofs occupy Sections~\ref{sec:mono-refl}--\ref{sec:gen-not-universal} and rest on the gadget of Section~\ref{sec:gadget}.

\begin{theorem}[Monomial versus Reflected]\label{thm:mono-vs-refl}
For every $m\geq 4$ even, with $n=4m^{3}$, there is a tractable instance on $n+1$ boundary variables on which the Reflected family closes the gap with $m=\left(n/4\right)^{1/3}$ rows, while every set of Monomial rows that closes the gap has cardinality at least $\tfrac12\,2^{m/2}=2^{\Omega(n^{1/3})}$.
\end{theorem}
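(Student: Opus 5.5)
The plan is to turn ``a row set $\mathcal C$ closes the gap'' into a sign-representation statement about functions on the boundary cube, and then to use a Minsky--Papert type function. That function has a short certificate in the Reflected family but, after a random restriction, only sign-representations of high degree in the Monomial family.

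\emph{Step 1 (dual reformulation).} On a single edge $e=ij$ with boundary $Q$, $|Q|=n+1$, take LP duality of the master \eqref{eq:complete-master} with rows $\mathcal C$. Since $\mathrm{OPT}=0$, the restricted relaxation has value $0$ if and only if there is a function
\[
g\in \operatorname{span}\bigl(\{1\}\cup\{x_p\}_{p\in Q}\cup\{\phi_h\}_{h\in\mathcal C}\bigr)
\]
such that $f_i(\mathbf x)-g(\mathbf x^{e})\ge 0$ on $X^i$ and $f_j(\mathbf x)+g(\mathbf x^{e})\ge 0$ on $X^j$. By projection (as noted after \eqref{eq:primal-characterization}), only the boundary signatures and their minimal local costs matter. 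The linear terms and the constant contribute only $n+2$ extra functions; for Monomial they are themselves Monomial functions, so they change counts only by an additive $O(n)$ term, which I will absorb into the constant. I will then design the two blocks so that these inequalities say exactly that $g$ sign-represents a Boolean function $F$ on $\{0,1\}^n$: it is strictly negative on $F^{-1}(1)$ and nonnegative on $F^{-1}(0)$. The extra $(n+1)$st boundary variable $y$ is used as a switch. Block $i$ contains the points $(1,\mathbf s)$ with $F(\mathbf s)=1$ at negative cost, and block $j$ contains the points $(1,\mathbf s)$ with $F(\mathbf s)=0$. Both contain $(0,\mathbf 0)$ at cost $0$, so the coupled set is $\{(0,\mathbf 0)\}$. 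Multiplying by $y$ is harmless for both families up to a reflection of $y$.

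\emph{Step 2 (the function and the upper bound).} Split $\{1,\dots,n\}$ into $m$ boxes $B_1,\dots,B_m$ of size $4m^2$, and take $F(\mathbf s)=\bigwedge_{b}\bigvee_{p\in B_b}s_p$, the one-in-a-box function. Its complement $\neg F$ holds exactly when some box is all-zero. The reflected monomials $\phi^R_{B_b}$ are indicators that box $b$ is empty, so $g=\sum_b \phi^R_{B_b}-\tfrac12$ (after accounting for $y$) satisfies the inequalities of Step 1. This uses $m$ Reflected rows. Linear-optimization oracles for both blocks are polynomial, since $F$ and its complement are optimized box by box; this gives tractability.

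\emph{Step 3 (lower bound for Monomial).} Suppose $g$ is a sign-representation of $F$ using a set $\mathcal C$ of monomials with $|\mathcal C|<\tfrac12 2^{m/2}$. In each box, apply a random restriction: fix a random half of the variables to $1$ and leave the rest free. Alternatively, pick in each box a random coordinate that survives and fix all others to $0$. The aim is to reduce to a Minsky--Papert instance on $m$ boxes with the OR still intact, while each monomial that touches many boxes is killed (set to $0$) with probability at least $1-2^{-m/2}$. A union bound then shows that some restriction kills every monomial of ``box-degree'' at least $m/2$. The restricted $g$ still sign-represents a one-in-a-box function on $m$ boxes, so the Minsky--Papert threshold-degree lower bound (degree $\ge m$, in the box-sensitive form) yields a contradiction. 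The box size $4m^2$ is the parameter needed so that the surviving subfunction keeps boxes of size large enough for Minsky--Papert (size $\ge 4m^2$ after restriction, or its symmetrization requirement).

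\emph{Main obstacle.} Step 3 is the hardest. The restriction must simultaneously (a) preserve a full one-in-a-box structure of threshold degree $\Theta(m)$, and (b) kill every high-degree monomial with probability exponentially close to one. Killing happens only when variables are set to $0$, yet setting variables to $0$ threatens to empty the ORs. My first attempt will be to ensure that each box retains at least one free variable, which makes the survival of a monomial that touches $t$ boxes have probability at most $2^{-t}$. Failing that, I would apply the symmetrization argument directly to the restricted polynomial, and choose the evenness of $m$ so that the count $\tfrac12 2^{m/2}$ falls out of the union bound.
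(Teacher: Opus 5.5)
Your Steps~1 and~2 are sound: they are the paper's gadget lemma and its Reflected certificate $-1+2\sum_a\phi^{R}_{B_a}$. (You need not pay an additive $O(n)$ for the constant and linear terms. They come from the original coupling rows and are never counted as encoding rows. Since they have degree at most one, they survive any restriction harmlessly. Subtracting $O(m^3)$ would actually destroy the stated bound $\tfrac12 2^{m/2}$ for moderate $m$.)

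The genuine gap is Step~3. You measure monomials by box-degree and invoke a ``box-sensitive'' Minsky--Papert bound, but no such bound exists. Take your own Reflected certificate from Step~2 and rewrite it in the Monomial basis: every resulting monomial is supported inside a single box. So killing all monomials that touch at least $m/2$ boxes still leaves room for a valid margin separator, and no contradiction follows. Minsky--Papert constrains total degree, so the restriction must eliminate every monomial of large total degree.

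Your concrete restrictions also fail on their own terms:
\begin{itemize}
\item Fixing variables to $1$ never zeroes a monomial, and it makes that box's OR identically true.
\item Leaving one free variable per box collapses $F$ to an AND of $m$ variables, which has threshold degree $1$.
\item No restricted box can keep $4m^2$ free variables, so $\mathrm{MP}_m$ is not available after restriction.
\end{itemize}

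The missing argument is the paper's Lemma~\ref{lem:mono-density}:
\begin{itemize}
\item Make each variable free or fixed to $0$, each with probability $\tfrac12$. A Monomial term of total degree $k\ge d:=m/2$ survives only if all $k$ of its variables are free, which has probability $2^{-k}\le 2^{-d}$.
\item If $D<\tfrac12 2^{d}$, then with probability above $\tfrac12$ no such term survives.
\item By the tail bound \eqref{eq:binomial-tail}, with probability above $\tfrac34$ every box keeps at least $m^2=4(m/2)^2$ free variables. Hence some single restriction achieves both properties.
\item Cover each box $a>m/2$ by setting one free variable to $1$ and the rest to $0$. Trim each remaining box to exactly $m^2$ free variables. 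This exhibits $\mathrm{MP}_{m/2}$.
\item By Theorem~\ref{thm:mp-degree}, $\mathrm{MP}_{m/2}$ has threshold degree at least $m/2$. But the restricted separator has degree at most $d-1<m/2$, a contradiction.
\end{itemize}
This passage to $m/2$ boxes, rather than a box-degree count, is what yields the exponent $m/2$.
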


By the complementation symmetry of the gadget the two roles reverse.

\begin{corollary}[Reflected versus Monomial]\label{cor:refl-vs-mono}
For every $m\geq 4$ even, with $n=4m^{3}$, there is a tractable instance on $n+1$ boundary variables on which the Monomial family closes the gap with $m=\left(n/4\right)^{1/3}$ rows, while every set of Reflected rows that closes the gap has cardinality at least $\tfrac12\,2^{m/2}=2^{\Omega(n^{1/3})}$.
\end{corollary}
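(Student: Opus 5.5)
The plan is to derive Corollary~\ref{cor:refl-vs-mono} from Theorem~\ref{thm:mono-vs-refl} by a global complementation of the boundary variables, with no new lower-bound argument. Let $\sigma:\{0,1\}^{Q}\to\{0,1\}^{Q}$, $\sigma(\mathbf x)=\mathbf 1-\mathbf x$, be the coordinatewise complement on the $n+1$ boundary variables. Take the tractable instance $I$ from Theorem~\ref{thm:mono-vs-refl}, with blocks $X^i,X^j$ and costs $f_i,f_j$. Define the instance $\bar I$ by $\bar X^l:=\{\mathbf x: \sigma_Q(\mathbf x)\in X^l\}$, complementing only the boundary coordinates, and $\bar f_l:=f_l\circ\sigma_Q$ for $l\in\{i,j\}$.

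The first step is to check that $\bar I$ is again tractable.
\begin{itemize}
\item A linear objective $\mathbf c^\top\mathbf x$ over $\bar X^l$ becomes, after the substitution $x_p\mapsto 1-x_p$ on boundary coordinates, a linear objective plus a constant over $X^l$. The original polynomial-size exact oracle therefore serves, with only linear overhead.
\item The coupled set of $I$ is $\{(0,\mathbf 0)\}$ in the paper's notation. I read the leading $0$ as the objective value, with boundary point $\mathbf 0$. Under $\sigma$ the unique coupled point becomes $\mathbf 1$, still with value $0$, so $\mathrm{OPT}=0$ is preserved.
\item If the definition of tractable insists literally on the point $\mathbf 0$, I would instead use the ``complementation symmetry of the gadget'' mentioned before the corollary. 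Presumably the gadget of Section~\ref{sec:gadget} is built so that a relabeling maps the complemented instance back to one of the same normalized form. This convention check is the one place I expect friction.
\end{itemize}

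The second step is the key identity $\phi^{R}_S\circ\sigma=\phi_S$ and $\phi_S\circ\sigma=\phi^{R}_S$ for every $S\subseteq Q$. Because the map $\mathbf x\mapsto\sigma_Q(\mathbf x)$ is a bijection between the lifted sets of $I$ and $\bar I$, it induces a bijection between columns. That bijection preserves costs and $\lambda$-weights and sends boundary distributions $\nu_{\mathbf s}$ to $\nu_{\sigma(\mathbf s)}$.

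It follows that the Dantzig--Wolfe relaxation \eqref{eq:primal-characterization} of $I$ with a set $\mathcal C$ of Reflected rows $\{\phi^R_S\}$ is isomorphic to that of $\bar I$ with the Monomial rows $\{\phi_S\}$ indexed by the same sets $S$, and vice versa. Their optimal values coincide, and the isomorphism preserves the number of rows.

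The third step transfers the two bounds from Theorem~\ref{thm:mono-vs-refl} to $\bar I$.
\begin{itemize}
\item In $I$, some $m$ Reflected rows close the gap. The corresponding $m$ Monomial rows therefore close the gap in $\bar I$.
\item Suppose some set of Reflected rows closed the gap in $\bar I$ with fewer than $\tfrac12 2^{m/2}$ rows. Its image would be a set of Monomial rows of the same size closing the gap in $I$, contradicting Theorem~\ref{thm:mono-vs-refl}.
\end{itemize}
The parameters $m$ and $n=4m^3$ are unchanged, which gives the stated bounds.
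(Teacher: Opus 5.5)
Your transfer mechanism is sound, and it takes a different route from the paper. Complementing all $n+1$ boundary coordinates, $\tau$ included, is a genuine isomorphism of instances. First, $\phi^{R}_{S}\circ\sigma=\phi_{S}$ holds for every $S\subseteq Q$, including $0\in S$. Second, each linear coupling row becomes its own negative modulo the two convexity rows. Hence the augmented master of $I$ with Reflected rows indexed by $\mathcal C$ and that of $\bar I$ with Monomial rows indexed by $\mathcal C$ are the same LP. This needs neither Lemma~\ref{lem:gadget} nor margin separators, and it works on any instance.

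The paper instead passes to the $\kappa(F)$-gadget. It complements only $\mathbf z$, and only on the $\tau=1$ part, while keeping the base point $(0,\mathbf 0)$. Row counts then transfer through Lemma~\ref{lem:gadget}: $r\mapsto r\circ\kappa$ maps margin separators for $F$ to margin separators for $\kappa(F)$. Because $r$ sees the encodings only through $\phi_h(1,\cdot)$, with base-point values absorbed by the free dual $\pi_0$, $\kappa$ exchanges Monomial and Reflected terms.

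The friction you flagged is a real discrepancy, and the fix you sketch does not work. In the paper, $(0,\mathbf 0)$ is the boundary signature $(\tau,\mathbf z)$, i.e.\ the all-zero vector of $\{0,1\}^{n+1}$, not a (value, point) pair. A tractable instance must have exactly $\{(0,\mathbf 0)\}$ as its coupled set. Your $\bar I$ has coupled set $\{(1,\mathbf 1)\}$ and costs $\tau-1$, so it is not tractable in the paper's sense.

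No relabeling by cube symmetries can normalize it. A coordinate permutation combined with complementations that sends $(1,\mathbf 1)$ to $(0,\mathbf 0)$ must complement every coordinate. It therefore undoes $\sigma$ and returns a permuted copy of the covering instance, on which Monomial is the expensive family. Likewise, the $\kappa(F)$-gadget is not a cube-symmetric image of $I$. Complementing $\mathbf z$ alone at the instance level would move the base point to $(0,\mathbf 1)$. It would also turn $\phi^{R}_{S}$ with $0\in S$ into the mixed function $(1-\tau)\prod_{p\in S\setminus\{0\}}z_p$.

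It is precisely Lemma~\ref{lem:gadget} that makes these $\tau$-factors and base-point values irrelevant. So you have two options. You can invoke that lemma with $\mathbf z$-only complementation, as the paper does. Or you can state explicitly that you prove the corollary under the weaker normalization ``coupled set is a singleton with $\mathrm{OPT}=0$''; under that normalization your argument is complete.
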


With Proposition~\ref{prop:gen-dominates}, these two place the Generalized family strictly above both sub-families.

\begin{theorem}[Generalized strictly dominates Monomial and Reflected]\label{thm:gen-strict-mono-refl}
For every $m\geq 4$ even, with $n=4m^{3}$, there is a tractable instance on which the Monomial family requires $2^{\Omega(n^{1/3})}$ rows while the Generalized family closes the gap with $m=\left(n/4\right)^{1/3}$ rows, and a tractable instance on which the Reflected family requires $2^{\Omega(n^{1/3})}$ rows while the Generalized family closes the gap with $m=\left(n/4\right)^{1/3}$ rows.
\end{theorem}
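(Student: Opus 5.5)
The statement follows by combining Theorem~\ref{thm:mono-vs-refl} and Corollary~\ref{cor:refl-vs-mono} with the containment of the Reflected and Monomial families in the Generalized family. No new construction is needed. The argument below is essentially the mechanism behind Proposition~\ref{prop:gen-dominates}, applied to a specific witness set of rows.

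\emph{First instance.} For $m\geq 4$ even and $n=4m^3$, take the tractable instance supplied by Theorem~\ref{thm:mono-vs-refl}.
\begin{itemize}
\item On this instance, every Monomial row set that closes the gap has size at least $\tfrac12 2^{m/2}=2^{\Omega(n^{1/3})}$. This is exactly the claimed Monomial lower bound.
\item The same theorem gives a set $\mathcal R$ of $m$ Reflected rows, indexed by subsets $S_1,\dots,S_m\subseteq Q$, that closes the gap.
\item Each reflected function satisfies $\phi^R_{S_t}=\phi^G_{\emptyset,S_t}$, so $\mathcal R$ is also a set of $m$ Generalized rows.
\item The relaxation $\mathrm{OPT}(\mathcal C)$ in \eqref{eq:primal-characterization} depends only on the functions $\phi_h$ defining the rows, not on which family they are labelled as coming from. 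Hence the lifted sets, the convex hulls and the encoding equalities are identical under either labelling.
\item Therefore the Generalized rows $\{(\emptyset,S_t)\}_{t=1}^m$ give the same value, namely $\mathrm{OPT}=0$, and the Generalized family closes the gap with $m=(n/4)^{1/3}$ rows.
\end{itemize}

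\emph{Second instance.} The argument is symmetric, using Corollary~\ref{cor:refl-vs-mono}. That corollary gives a tractable instance on which every Reflected row set that closes the gap has size $2^{\Omega(n^{1/3})}$. It also gives $m$ Monomial rows $S_1,\dots,S_m$ that close the gap. These are the Generalized rows $(S_t,\emptyset)$, since $\phi_{S_t}=\phi^G_{S_t,\emptyset}$, which again yields $m$ Generalized rows.

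\emph{Main point to check.} The one thing to verify is the family-independence step: that ``closing the gap'' is a property of the set of functions $\{\phi_h\}$ on the edge, and not of the family from which they are drawn. This is immediate from the definitions of $X^i_{\mathcal H}$ and \eqref{eq:primal-characterization}, because the lifted coordinates $w_h$ are determined by the $\phi_h$ alone.

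Strictness then follows. By Proposition~\ref{prop:gen-dominates} the Generalized family never needs more rows than either sub-family. The two instances above show that the gap between them can be exponential in each case.
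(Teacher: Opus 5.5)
Your proposal is correct and follows essentially the same route as the paper's proof. The paper takes the $m$ Reflected (respectively Monomial) witness rows from Theorem~\ref{thm:mono-vs-refl} (respectively Corollary~\ref{cor:refl-vs-mono}) and notes that they are Generalized functions, so the Generalized family closes the gap with $m$ rows while the other sub-family still needs $2^{\Omega(n^{1/3})}$. Your explicit check that closing the gap depends only on the chosen functions, not on the family they are labelled with, is exactly what the paper uses implicitly when it cites Proposition~\ref{prop:gen-dominates}.
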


The Vertex family can also be dominated as sharply.

\begin{theorem}[Generalized strictly dominates Vertex]\label{thm:vertex-vs-gen}
For every $n\geq 6$ divisible by 3, there is a tractable instance on $n+1$ boundary variables on which the Generalized family closes the gap with two rows, while every set of Vertex rows that closes the gap has cardinality at least $2^{n/3}=2^{\Omega(n)}$.
\end{theorem}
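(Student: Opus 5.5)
The plan is to build a one-edge instance on boundary variables $(y,\mathbf x)\in\{0,1\}\times\{0,1\}^n$, with the $n$ coordinates of $\mathbf x$ split into $n/3$ disjoint triples $T_1,\dots,T_{n/3}$. I will first reduce ``closing the gap'' to a statement about boundary distributions, and then treat the upper bound (two Generalized rows) and the lower bound (many Vertex rows) separately. The gadget below is my intended choice; its triple patterns have not yet been checked against both bounds.

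\textbf{Instance and reduction.} Let $X^i=\{(0,\mathbf 0)\}\cup(\{1\}\times A)$ and $X^j=\{(0,\mathbf 0)\}\cup(\{1\}\times B)$, where $A,B\subseteq\{0,1\}^n$ are disjoint product sets $A=\prod_t P_t$ and $B=\prod_t P'_t$. Here $P_t,P'_t\subseteq\{0,1\}^{T_t}$ are disjoint pattern sets with equal uniform means. Take costs $f_i=-2y$ and $f_j=y$. The coupled set is then $\{(0,\mathbf 0)\}$, so $\mathrm{OPT}=0$. Linear optimization over $X^i$ and $X^j$ decomposes triple by triple, so both blocks are tractable. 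By projecting \eqref{eq:primal-characterization} onto the boundary, a row set $\mathcal C$ closes the gap if and only if there are no two boundary distributions $\nu^i$ on $X^i$ and $\nu^j$ on $X^j$ with the following properties: they put equal mass $t>0$ on $y=1$, agree on all coordinate means, and agree on every row of $\mathcal C$. Any such pair has cost $-t<0$, and conversely any pair of negative cost must have this form.

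\textbf{Upper bound: two Generalized rows.} I want the pattern sets chosen so that a single partial assignment separates the two sides in expectation. Concretely, I aim for $\phi=y\prod_{p\in S^+}x_p\prod_{p\in S^-}(1-x_p)$, with $(S^+,S^-)$ a fixed pattern on one triple, such that $\phi\equiv y$ on $X^i$ and $\phi\equiv 0$ on $X^j$. The row $\phi$ then forces $\nu^i(y=1)=0$, hence $t=0$. A second row, for instance a complementary pattern on the same triple, is what I expect to need to absorb the case where no single pattern covers all of $P_1$. Equality of the two rows' expectations then contradicts $t>0$. This step is a direct computation once the patterns are fixed.

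\textbf{Lower bound: Vertex rows.} For the lower bound I will exhibit $2^{n/3}$ matched pairs $(\nu^i_\sigma,\nu^j_\sigma)$, one for each $\sigma\in\{0,1\}^{n/3}$. In triple $t$, the coordinate $\sigma_t$ selects one of two disjoint equal-mean pattern pairs from $P_t$ and $P'_t$, such as the even and odd parity classes, which both have mean $(\tfrac12,\tfrac12,\tfrac12)$. Each $\nu_\sigma$ is the corresponding product distribution placed on $y=1$, so the pair agrees on all coordinate means. The supports are to be pairwise disjoint across distinct $\sigma$, so that any vertex $\mathbf s$ lies in the support of at most one pair. On every other pair, the Vertex row for $\mathbf s$ evaluates to $0=0$. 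Hence any Vertex set with fewer than $2^{n/3}$ rows misses some $\sigma$, and that pair certifies a negative relaxation value.

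\textbf{Main obstacle.} The two requirements pull against each other, and reconciling them is the step I expect to be hardest. The lower bound needs many disjoint, mean-matched supports, which suggests parity-type patterns. The upper bound needs a product (partial-assignment) function to separate the two sides, and parity is not such a function. I would first try making the separating coordinates come from the extra $y$-like structure on one triple while keeping the remaining triples parity-based. If that fails, I would look for a per-triple pattern family in which one fixed literal pair distinguishes $P_t$ from $P'_t$, while two disjoint mean-matched choices still exist within each side.
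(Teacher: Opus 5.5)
\textbf{There is a genuine gap.} Your proposal never fixes an instance that meets both bounds, and the upper-bound mechanism your plan is built around cannot work. Suppose a face indicator $\phi=y\prod_{p\in S^+}x_p\prod_{p\in S^-}(1-x_p)$ with $S^+\cup S^-\neq\emptyset$ satisfies $\phi\equiv y$ on $X^i$ and $\phi\equiv 0$ on $X^j$. Then $x_p=y$ on $X^i$ for $p\in S^+$, and $x_p=0$ on $X^i$ for $p\in S^-$. Matching these coordinate means, together with $x_p\le y$ on $X^j$, forces the $y=1$ part of $\nu^j$ into the face $\{\mathbf x_{S^+}=\mathbf 1,\ \mathbf x_{S^-}=\mathbf 0\}$, which misses $B$. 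So $t=0$ already follows from the original coupling rows. The empty set of Vertex rows then closes the gap, no mean-matched pair with $t>0$ exists, and your lower bound collapses.

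The ``main obstacle'' you name rests on a misconception: the encoding rows need not separate the two sides on their own. Because all coordinate means are matched, every function in the span of $y$, the $x_p$, and the chosen rows has the same expectation under $\nu^i$ and $\nu^j$. If one such function is $\le -c$ on the $y=1$ part of $X^i$ and $\ge c$ on that of $X^j$ for some $c>0$, then $t=0$. This is the margin-separator criterion of Lemma~\ref{lem:gadget}. The affine part is supplied by the multipliers of the original coupling rows, and the encoding rows only have to repair the points where that affine function has the wrong sign.

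With this criterion, either of two repairs finishes the proof.

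\emph{Repair 1: your own parity-per-triple gadget.} Take $P_t$ to be the even and $P'_t$ the odd patterns on $T_t$.
For the lower bound, pair $\{000,011\}$ with $\{001,010\}$ (common mean $(0,\tfrac12,\tfrac12)$), and $\{101,110\}$ with $\{100,111\}$ (common mean $(1,\tfrac12,\tfrac12)$). Your disjoint-support count then gives $2^{n/3}$.
For the upper bound, take $\phi_1=y\prod_{p\in T_1}(1-x_p)$ and $\phi_2=y\prod_{p\in T_1}x_p$. At $y=1$, the function $\tfrac32 y-\sum_{p\in T_1}x_p-3\phi_1+3\phi_2$ takes the values $-\tfrac32,\tfrac12,-\tfrac12,\tfrac32$ according as $\mathbf x_{T_1}$ has weight $0,1,2,3$. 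So it is $\le-\tfrac12$ and $\ge\tfrac12$ on the $y=1$ parts of $X^i$ and $X^j$, respectively, and two Generalized rows close the gap.

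\emph{Repair 2: the paper's construction.} The paper realizes your ``complementary pattern'' fallback directly. It takes $F=C_1\cup C_2$, a union of two opposing faces, with $X^j$ carrying the complement. Then $\phi^G_{S^+,S^-}+\phi^G_{S^-,S^+}$ equals $y$ on $X^i$ and $0$ on $X^j$, and $r=1-2\phi^G_{S^+,S^-}-2\phi^G_{S^-,S^+}$ is a margin separator whose affine part is just a constant. Unlike a single face, a union of two opposing faces is compatible with mean matching. Parity patterns cannot support this fallback, since any face of positive dimension contains points of both parities.

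The paper's Vertex lower bound is exactly your disjoint mean-matched-pairs argument in dual form. For each assignment of the $W$-block, the uniform distributions on $\{\mathbf u_y,\mathbf v_y\}\subseteq F$ and $\{\mathbf m_y,\mathbf m'_y\}\subseteq\bar F$ have equal means. Here $y$ indexes the $W$-block, not your extra coordinate. This equality of means is the identity $\mathbf u_y+\mathbf v_y=\mathbf m_y+\mathbf m'_y$ used there.
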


Finally, the Generalized family, richest of the four, is itself not universal: it can be exponential on a tractable instance that a single encoding outside the family closes.

\begin{theorem}[Generalized lower bound]\label{thm:generalized-encoding-lower-bound}
For every $n\geq 12$ there is a tractable instance on $n+1$ boundary variables, the \emph{parity instance}, such that every set of Generalized rows that closes the gap has cardinality at least $\tfrac12(4/3)^{\lceil n/4\rceil}=2^{\Omega(n)}$.
\end{theorem}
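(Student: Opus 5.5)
The plan is to exhibit, for every small family of Generalized rows, a fractional master solution that satisfies all those rows yet has negative objective. That certifies the rows do not close the gap. The parity instance I expect is a single oriented edge with boundary variables $(t,\mathbf x)\in\{0,1\}^{1+n}$:
\begin{itemize}
\item block $i$ admits $(0,\mathbf 0)$ and all $(1,\mathbf x)$ with $\mathbf x$ of even parity;
\item block $j$ admits $(0,\mathbf 0)$ and all $(1,\mathbf x)$ with $\mathbf x$ of odd parity;
\item the objective rewards $t=1$ on one side, e.g.\ cost $-t$ at block $i$.
\end{itemize}
The coupled set is then $\{(0,\mathbf 0)\}$ and $\mathrm{OPT}=0$. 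Both blocks have trivial polynomial-size linear-optimization oracles. A single parity encoding outside the Generalized family closes the gap, since its expectation is $0$ on one side and equal to the mass on $t=1$ on the other. I will adapt the details below to the paper's precise gadget. In particular, the exponent $\lceil n/4\rceil$ suggests the actual construction groups coordinates in blocks of four, and the random restriction below would then be applied blockwise.

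Step 1 reduces closing the gap to a moment-matching statement. Let $\mathcal C$ be a set of Generalized rows. Suppose there are distributions $\mu$ on the even points and $\mu'$ on the odd points with $\mathbb E_\mu g(1,\cdot)=\mathbb E_{\mu'} g(1,\cdot)$ for every $g\in\mathcal C$ and every degree-one monomial. Putting these distributions on the two blocks (with $t=1$) then gives a feasible master solution of value $-1<0$. The linear coupling rows are themselves Generalized functions with $|S^+\cup S^-|=1$, so I treat them as part of $\mathcal C$. Rows whose pattern fixes $t$ reduce as follows: if $t\in S^-$ the function vanishes on both supports, and if $t\in S^+$ or $t$ is free it becomes a subcube indicator $g'$ on $\mathbf x$. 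So it suffices to match the means of the subcube indicators $g'$ on $\{0,1\}^n$.

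Step 2 uses subcube distributions. Take a subcube $C$ with free coordinate set $R$, $|R|\ge 2$, and let $\mu,\mu'$ be uniform on the even and odd points of $C$ respectively. Let $g$ be a subcube indicator with fixed set $F=S^+\cup S^-$.
\begin{itemize}
\item If $g$ conflicts with the fixed part of $C$, then $g\equiv 0$ on $C$.
\item Otherwise $g|_C$ is the indicator of a subcube of $C$ with free set $R\setminus F$. If this set is nonempty, that subcube contains equally many even and odd points, so the two means agree.
\end{itemize}
A mismatch therefore occurs only when $g$ \emph{pins} $C$, meaning $F\supseteq R$ and $g$ is consistent with $C$ off $R$. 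Degree-one rows never pin $C$ because $|R|\ge 2$.

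Step 3 is a random restriction combined with a union bound. Choose $C$ by independently making each coordinate free, fixed to $0$, or fixed to $1$, each with probability $1/3$. For a fixed $g$, each coordinate in $F$ must be free or consistent with $g$, which has probability $2/3$. Each coordinate outside $F$ must be fixed, also probability $2/3$. Hence
\[
\Pr[g\text{ pins }C]=(2/3)^n,
\]
and the event $|R|\le 1$ has exponentially small probability as well. If $|\mathcal C|$ is below roughly $\tfrac12(3/2)^n$, some $C$ is pinned by no row and has $|R|\ge 2$, which gives the required fractional solution. This is at least as strong as the claimed $\tfrac12(4/3)^{\lceil n/4\rceil}$.

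The main obstacle is matching this argument to the actual parity gadget. If the local sets are not full parity classes but, for instance, restricted to groups of four coordinates to keep the oracles or the symmetry with the earlier gadget, two things must be redone. First, I need to check that uniform even/odd distributions on a restricted subcube stay inside the local sets. Second, the restriction must be chosen per group, with per-group success probability $3/4$ in place of $2/3$, which would yield exactly the $(4/3)^{\lceil n/4\rceil}$ bound. The factor $\tfrac12$ absorbs the failure event that $R$ is too small.
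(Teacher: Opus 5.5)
Your proof is correct, and it gives a stronger bound than the statement by a genuinely different route. One correction to your speculation first. The paper's parity instance is exactly the one you wrote down: full parity classes $E_n$, $O_n$ on the $\tau=1$ layer, with cost $-\tau$ on \emph{both} blocks, so your fooling solution has value $-2$ rather than $-1$. The exponent $\lceil n/4\rceil$ comes from the paper's proof technique, not from any grouping of coordinates, so your argument applies verbatim and no blockwise adaptation is needed.

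\textbf{The paper's route (dual side).} By Lemma~\ref{lem:gadget}, a closing row set yields a margin separator $r=l+\sum_h\mu_h\phi_h$ that sign-represents parity. A random restriction with labels $\mathrm{free}_0,\mathrm{free}_1,\mathrm{fix}_0,\mathrm{fix}_1$ then annihilates every term of support at least $d=\lceil n/4\rceil$, since each survives with probability at most $(3/4)^d$. The binomial tail estimate keeps at least $n/4$ coordinates free. Hence $\deg r|_\rho<n'$, which contradicts the Minsky--Papert sign-degree bound.

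\textbf{Your route (primal side).} You exhibit the LP certificate directly. The uniform distributions on the even and odd points of a subcube $C$ agree on every affine function when $\dim C\ge 2$, and on every conjunction that does not pin $C$. This is in effect the dual proof of the Minsky--Papert parity bound, inlined. It also shows the paper demands more than it needs: a term need not be annihilated, only have restricted degree $|F\cap R_C|<\dim C$, which is exactly ``does not pin $C$.'' Every conjunction pins $C$ with probability exactly $(2/3)^n$, regardless of its support. Equivalently, each conjunction pins at most $2^n$ of the $3^n-2^n-n2^{n-1}$ subcubes of dimension at least two. This gives
\[
|\mathcal C|\ \ge\ (3/2)^n-1-\tfrac n2 ,
\]
which dominates $\tfrac12(4/3)^{\lceil n/4\rceil}$ for all $n\ge 12$. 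You also need only weak duality of the master, not the full gadget-lemma equivalence, and no external theorem or tail estimate.

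\textbf{What each buys.} Your argument is more elementary and quantitatively much sharper for parity. The paper's restrict-then-invoke-threshold-degree template buys reuse: the same scheme proves the Monomial lower bound on the covering instance (Lemma~\ref{lem:mono-density}). There no explicit fooling distribution is at hand; one exists only via duality from the one-in-a-box theorem.

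Finally, your tractability claim deserves its one line: optimize coordinatewise, flip a minimum-loss coordinate if the parity is wrong, and compare with the origin.
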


\begin{proposition}[A single non-family encoding closes the gap]\label{prop:parity-upper-bound}
For the parity instance of Theorem~\ref{thm:generalized-encoding-lower-bound}, there is a nonlinear encoding outside the Generalized family such that adding the single consistency row closes the gap.
\end{proposition}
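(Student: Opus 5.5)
The plan is to exploit the structure of the parity instance directly. I expect it to follow the pattern of the gadget in Section~\ref{sec:gadget}: one edge with boundary $(y,\mathbf x)\in\{0,1\}\times\{0,1\}^n$, and each endpoint block's boundary projection equal to $(0,\mathbf 0)$ together with points having $y=1$. On the two sides these $y=1$ points are separated by the parity of $|\mathbf x|$: block $i$ allows only even parity and block $j$ only odd parity, or the reverse, so that the coupled set is $\{(0,\mathbf 0)\}$. The costs reward $y=1$, which is what creates the gap. I will take the single encoding
\[
\psi(y,\mathbf x)\;:=\;y\cdot \chi(\mathbf x),\qquad \chi(\mathbf x):=\mathbf 1\{|\mathbf x| \text{ even}\}.
\]
Equivalently I could use $y\,(1+(-1)^{|\mathbf x|})/2$, whichever matches the parity convention of the instance. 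This function is not a Generalized function. Every $\phi^{G}_{S^+,S^-}$ is the indicator of a subcube, and $\psi$ is not one: its support $\{y=1,\ |\mathbf x| \text{ even}\}$ contains two points at Hamming distance two but not the point between them. In any case, the statement only needs one consistency row, so this remark just confirms that $\psi$ is outside the family.

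The key steps, in order, are as follows. First, fix any feasible solution of \eqref{eq:primal-characterization} with $\mathcal H^e=\{\psi\}$, and pass to the boundary distributions $\nu^{e,i},\nu^{e,j}$ of \eqref{eq:marginals}. By the remark after \eqref{eq:primal-characterization}, the convexification can be projected onto the shared variables, so only these distributions matter. Second, evaluate the row on each side. On block $i$'s support, $\psi$ equals $y$, so $\mathbb E_{\nu^i}[\psi]=\nu^i(y=1)$. On block $j$'s support, $\psi\equiv 0$, so $\mathbb E_{\nu^j}[\psi]=0$. The encoding row therefore forces $\nu^i(y=1)=0$, and the original linear row for the coordinate $y$ then forces $\nu^j(y=1)=\nu^i(y=1)=0$. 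Third, since the only boundary point with $y=0$ in either block is $(0,\mathbf 0)$, both distributions are the point mass at $(0,\mathbf 0)$. Every column in the support then has boundary value in the coupled set, so the relaxation value equals that of \eqref{eq:decomposed-problem}, i.e.\ $\mathrm{OPT}(\{\psi\})=\mathrm{OPT}=0$.

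The main obstacle is the second step. It depends on the exact form of the parity instance. If the blocks' $y=1$ parts are not cleanly separated by parity, for instance if parity is combined with extra structure or a relative shift between the two sides, then $\psi$ must be adapted. My fallback is the following. Let $B^i,B^j$ be the boundary projections of the two blocks, and choose $\psi$ to be the indicator of $B^i\setminus\{(0,\mathbf 0)\}$ restricted to a parity class. This is designed so that $\psi$ equals $1$ on $B^i\setminus\{(0,\mathbf 0)\}$ and $0$ on $B^j$. Then the same two-line argument shows that the mass off $(0,\mathbf 0)$ vanishes on side $i$, and the linear rows transfer this to side $j$ provided every nonzero point of $B^j$ has some coordinate with nonzero mean. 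That proviso holds automatically here, since all boundary variables are nonnegative and any point other than $(0,\mathbf 0)$ has some coordinate equal to $1$. I would also check that $\psi$ keeps the pricing problem tractable: it is a polynomial-size parity constraint, representable with one auxiliary integer variable via $\sum_p x_p=2t$, which is consistent with the instance being called tractable.
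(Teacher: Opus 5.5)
Your proof is correct. Your encoding matches the paper's up to the existing linear row: with $\phi^{\mathrm{par}}(y,\mathbf x)=y(-1)^{|\mathbf x|}$ one has $\psi=\tfrac12\bigl(y+\phi^{\mathrm{par}}\bigr)$. Your guess of the instance is also exact ($X^i$ carries $E_n$, $X^j$ carries $O_n$, costs $-y$), so the fallback is unnecessary.

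The difference is that you argue on the other side of LP duality. The paper works in the dual: it takes $r=-\phi^{\mathrm{par}}(1,\cdot)$, which is $-1$ on $E_n$ and $+1$ on $O_n$, i.e.\ a margin separator. Lemma~\ref{lem:gadget} then turns $r$ into duals with no negative reduced cost and dual value $0$. You work in the primal: the $\psi$-row forces $\nu^i(y=1)=0$, the $y$-coupling row transfers this to $\nu^j$, and so every feasible point of \eqref{eq:primal-characterization} has value $0$. This is self-contained and bypasses the gadget lemma.

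For non-membership, you show directly that $\psi$ is not a subcube indicator. That is elementary, but it only establishes literal non-membership. The paper instead deduces it from Theorem~\ref{thm:generalized-encoding-lower-bound}, which says at least two Generalized rows are needed. That route is much heavier but yields a stronger conclusion: no single Generalized row, even combined with the affine coupling terms, can replace the parity row.

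Your closing pricing remark lies outside the statement, and as written it is incomplete. The constraint $\sum_p x_p=2t$ only imposes even parity; it does not model $\psi$. You would need something like $\sum_p x_p=2t+1-e$ with $e$ binary and $w=y\wedge e$.
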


\begin{remark}
\label{rem:modeling}
The examples used in the proofs of this section also suggest a practical modeling principle. Whenever the coupling equalities imply simple restrictions involving variables that belong to different blocks, it is useful to translate those implications into local constraints before running column-and-row generation. Such constraints are redundant for the complete coupled formulation, but they may be nonredundant after local convexification and may substantially strengthen the pricing problems. In this sense, part of the modeling effort should be to expose as many cross-block implications as possible through local block descriptions, rather than leaving all of them to be discovered by the generated edge encodings.
\end{remark}

\subsection{A common gadget}
\label{sec:gadget}

All four separations live on one single-edge gadget, parametrized by a subset of the cube. Let the $\mathbf z$-coordinates be indexed by a finite set $R$, take an oriented edge $e=ij$ with boundary $Q=\{0\}\cup R$, and write a boundary signature as $\mathbf x:=(\tau,\mathbf z)$ with $\tau=s_{0}$ and $z_{p}=s_{p}$ for $p\in R$. Fix $F\subseteq\{0,1\}^{R}$ and write $\bar F:=\{0,1\}^{R}\setminus F$. The two endpoint blocks are $X^{i}:=\{(0,\mathbf 0)\}\cup\{(1,\mathbf z):\mathbf z\in F\}$ and $X^{j}:=\{(0,\mathbf 0)\}\cup\{(1,\mathbf z):\mathbf z\in\bar F\}$, with local costs $f_{i}(\mathbf{x}^{i})=-\tau$ and $f_{j}(\mathbf{x}^{j})=-\tau$. We call this the \emph{$F$-gadget}. Its value and its row requirement are completely captured by the following separation problem.

\begin{lemma}[Gadget lemma]\label{lem:gadget}
For the $F$-gadget the coupled set is the singleton $\{(0,\mathbf 0)\}$ and $\mathrm{OPT}=0$. Moreover, for any encoding family $\mathcal H$ and any positive integer $k$, the following are equivalent:
\begin{enumerate}
\item[(a)] there is a set $\mathcal C$ of $k$ rows from $\mathcal H$ whose augmented Dantzig-Wolfe master certifies $\mathrm{OPT}=0$ with no negative reduced-cost column;
\item[(b)] there are encoding functions $\phi_{1},\dots,\phi_{k}\in\mathcal H$ and reals $\pi_{0},(\pi_{p})_{p\in R},(\mu_{h})_{h}$ such that
  $
    r(\mathbf z)\;:=\;\Big(\pi_{0}-\sum_{h=1}^{k}\mu_{h}\,\phi_{h}(0,\mathbf 0)\Big)
    +\sum_{p\in R}\pi_{p}z_{p}+\sum_{h=1}^{k}\mu_{h}\,\phi_{h}(1,\mathbf z)
  $
satisfies
  \begin{equation}\label{eq:margin}
    r(\mathbf z)\leq -1 \ \ (\mathbf z\in F),\qquad r(\mathbf z)\geq 1 \ \ (\mathbf z\in\bar F).
  \end{equation}
\end{enumerate}
We call any $r$ satisfying \eqref{eq:margin} a \emph{margin separator for $F$}.
\end{lemma}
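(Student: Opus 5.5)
The plan is to prove the two claims separately. The claim about the coupled set is immediate, and the equivalence comes from LP duality for the augmented Dantzig--Wolfe master \eqref{eq:complete-master}.

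\emph{Coupled set.} A coupled solution needs $\mathbf x^{e,i}=\mathbf x^{e,j}$. Any point with $\tau=1$ would need $\mathbf z\in F\cap\bar F=\emptyset$, so the only common point is $(0,\mathbf 0)$. Its cost is $0+0$, hence $\mathrm{OPT}=0$. For any row set $\mathcal C$, putting both blocks on the column $(0,\mathbf 0)$ is feasible for the master with value $0$. So the master value is always $\le 0$, and it equals $0$ exactly when the gap is closed.

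\emph{Dual formulation.} Fix rows $\phi_1,\dots,\phi_k$ and take the master over all columns. Its LP dual has variables $\alpha_i,\alpha_j,\pi_0,(\pi_p)_{p\in R},(\mu_h)$. Write $g(\mathbf x):=\pi_0\tau+\sum_{p\in R}\pi_p z_p+\sum_h\mu_h\phi_h(\mathbf x)$. Using the sign conventions of \eqref{eq:pricing}, a column of block $i$ has nonnegative reduced cost iff $f_i(\mathbf x)-\alpha_i-g(\mathbf x)\ge 0$, and a column of block $j$ iff $f_j(\mathbf x)-\alpha_j+g(\mathbf x)\ge0$. For the $F$-gadget these constraints read:
\begin{align*}
\alpha_i &\le -g(0,\mathbf 0), & \alpha_i &\le -1-g(1,\mathbf z) \quad (\mathbf z\in F),\\
\alpha_j &\le g(0,\mathbf 0), & \alpha_j &\le -1+g(1,\mathbf z) \quad (\mathbf z\in\bar F).
\end{align*}
For fixed $(\pi,\mu)$ the best dual objective $\alpha_i+\alpha_j$ is the sum of the two minima of the right-hand sides. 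Since $-g(0,\mathbf 0)+g(0,\mathbf 0)=0$, this sum is at most $0$. It equals $0$ if and only if the $(0,\mathbf 0)$ term attains each minimum, that is,
\[
g(1,\mathbf z)-g(0,\mathbf 0)\le -1 \ (\mathbf z\in F)
\quad\text{and}\quad
g(1,\mathbf z)-g(0,\mathbf 0)\ge 1 \ (\mathbf z\in\bar F).
\]

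\emph{Identifying $r$.} Expanding $r(\mathbf z):=g(1,\mathbf z)-g(0,\mathbf 0)$ gives exactly the expression in (b): $\pi_0+\sum_p\pi_pz_p+\sum_h\mu_h\phi_h(1,\mathbf z)-\sum_h\mu_h\phi_h(0,\mathbf 0)$. Hence $r$ is a margin separator for $F$ in the sense of \eqref{eq:margin}.

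\emph{(a)$\Rightarrow$(b).} By strong LP duality for the finite master, a master certifying value $0$ with no negative reduced-cost column supplies duals of value $0$ that satisfy all the constraints above. These duals give the required $\pi$ and $\mu$.

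\emph{(b)$\Rightarrow$(a).} Given a margin separator, set $\alpha_i:=-g(0,\mathbf 0)$ and $\alpha_j:=g(0,\mathbf 0)$. This is dual feasible with value $0$, so every column has nonnegative reduced cost. By weak duality the master (with any column subset containing $(0,\mathbf 0)$) has value $\ge 0$, hence exactly $0$.

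The main obstacle is bookkeeping rather than depth. One has to get the orientation signs $\tau_i^e$ right so that the $\mu$-terms enter the two blocks with opposite signs. The margin of exactly $1$ comes out automatically because the costs are $-\tau$, so no rescaling argument is needed. If the rows in (a) were allowed to number fewer than $k$, one would pad (b) with $\mu_h=0$.
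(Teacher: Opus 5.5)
Your proposal is correct and follows essentially the same route as the paper. You read the reduced-cost conditions at the origin and at the $\tau=1$ columns as dual feasibility constraints, and you use the dual value $\alpha_i+\alpha_j=0$ to force both origin constraints to be tight, which yields $r(\mathbf z)=g(1,\mathbf z)-g(0,\mathbf 0)$ as the margin separator. For the converse you set $\alpha_i=-g(0,\mathbf 0)$ and $\alpha_j=g(0,\mathbf 0)$; your ``sum of the two minima'' phrasing is just a repackaging of the paper's step of adding the two origin inequalities.
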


\begin{proof}
After imposing $\mathbf{x}^{i}=\mathbf{x}^{j}$ a point with $\tau=1$ would require $\mathbf z\in F$ and $\mathbf z\in\bar F$ simultaneously, so $X^{i}\cap X^{j}=\{(0,\mathbf 0)\}$, whence $\mathrm{OPT}=0$.

Let $\mathcal C$ be a finite set of encoding rows. Let $\alpha_{i},\alpha_{j}$ be the dual multipliers of the two convexity constraints, $\pi_{p}$ ($p\in Q$) those of the original coupling rows, and $\mu_{h}$ ($h\in\mathcal C$) those of the generated rows. Put
$
  L(\tau,\mathbf z):=\pi_{0}\tau+\sum_{p\in R}\pi_{p}z_{p}+\sum_{h\in\mathcal C}
  \mu_{h}\,\phi_{h}(\tau,\mathbf z).
$
Because the edge is oriented from $i$ to $j$, the reduced cost of a candidate column is
$\bar c_{i}(\tau,\mathbf z)=-\tau-\alpha_{i}-L(\tau,\mathbf z)$ over $X^{i}$ and $\bar c_{j}(\tau,\mathbf z)=-\tau-\alpha_{j}+L(\tau,\mathbf z)$ over $X^{j}$.

\emph{(a)$\Rightarrow$(b).} The only rows with nonzero right-hand side are the two convexity constraints, each equal to $1$, so a dual solution of objective value $\mathrm{OPT}=0$ has $\alpha_{i}+\alpha_{j}=0$. Certification gives $\bar c_{i},\bar c_{j}\geq 0$ throughout; at the origin $\bar c_{i}(0,\mathbf 0)=-\alpha_{i}-L(0,\mathbf 0)\geq 0$ and $\bar c_{j}(0,\mathbf 0)=-\alpha_{j}+L(0,\mathbf 0)\geq 0$, and adding them with $\alpha_{i}+\alpha_{j}=0$ forces equality, so $-\alpha_{i}=\alpha_{j}=L(0,\mathbf 0)$. For $\mathbf z\in F$, $(1,\mathbf z)\in X^{i}$ yields $\bar c_{i}(1,\mathbf z)=-1+L(0,\mathbf 0)-L(1,\mathbf z)\geq 0$; for $\mathbf z\in\bar F$, $(1,\mathbf z)\in X^{j}$ yields $\bar c_{j}(1,\mathbf z)=-1-L(0,\mathbf 0)+L(1,\mathbf z)\geq 0$. Hence $r(\mathbf z):=L(1,\mathbf z)-L(0,\mathbf 0)$ satisfies \eqref{eq:margin}. Substituting $L$ and cancelling the constant gives the displayed form.

\emph{(b)$\Rightarrow$(a).} Given $r$ as in (b), read off duals: take $\mu_{h}$, $\pi_{p}$, and $\pi_{0}$ as written, and set $\alpha_{i}=-L(0,\mathbf 0)$, $\alpha_{j}=L(0,\mathbf 0)$, so that $\alpha_{i}+\alpha_{j}=0$ and the dual objective is $0$. Then $\bar c_{i}(0,\mathbf 0)=\bar c_{j}(0,\mathbf 0)=0$, while for $\mathbf z\in F$, $\bar c_{i}(1,\mathbf z)=-1-r(\mathbf z)\geq 0$ by \eqref{eq:margin}, and for $\mathbf z\in\bar F$, $\bar c_{j}(1,\mathbf z)=-1+r(\mathbf z)\geq 0$. No column has negative reduced cost, so the rows certify $\mathrm{OPT}=0$.
\end{proof}

Lemma~\ref{lem:gadget} reduces every claim below to a separation statement: an \emph{upper bound} for a family is a short margin separator built from its encoding functions, and a \emph{lower bound} is a proof that any margin separator built from its functions needs many of them.

We also record a complementation symmetry. Let $\kappa(\mathbf z):=\mathbf 1-\mathbf z$ denote coordinatewise complementation on $\{0,1\}^{R}$. Then $\mathbf z\in\kappa(F)$ if and only if $\kappa(\mathbf z)\in F$, so the substitution $r\mapsto r\circ\kappa$ is a bijection between margin separators for $F$ and margin separators for $\kappa(F)$ that preserves the number of encoding terms. On $\{0,1\}^{R}$, composition with $\kappa$ maps affine functions to affine functions, exchanges the Monomial and Reflected functions ($\phi^M_{S}\circ\kappa=\phi^{R}_{S}$ and $\phi^{R}_{S}\circ\kappa=\phi^M_{S}$), maps $\phi^{G}_{S^{+},S^{-}}$ to $\phi^{G}_{S^{-},S^{+}}$, and maps the Vertex indicator of $s$ to that of $\kappa(s)$. Therefore, for each family, the least number of terms from the family in a margin separator for $F$ equals the least number of terms from the complemented family in a margin separator for $\kappa(F)$; combined with Lemma~\ref{lem:gadget}, this transfers row bounds between the two gadgets, with the Monomial and Reflected roles exchanged. Finally, since $\min\{\langle c,\mathbf z\rangle:\mathbf z\in\kappa(F)\} =\langle c,\mathbf 1\rangle-\max\{\langle c,\mathbf z\rangle:\mathbf z\in F\}$, linear-optimization oracles transfer under $\kappa$, so the $\kappa(F)$-gadget is tractable whenever the $F$-gadget is.

\subsection{Monomial versus Reflected versus Generalized}
\label{sec:mono-refl}

Fix $m\geq 4$ even. Set $w:=4m^{2}$ and $n:=mw=4m^{3}$, and index the $\mathbf z$-coordinates by $m$ disjoint groups $G_{1},\dots,G_{m}$ of size $w$, writing $z_{ab}$ for the $b$-th coordinate of group $a$ ($a\in[m]$, $b\in[w]$). Consider the set-covering system
$\sum_{b=1}^{w} z_{ab}\geq 1$ for $a\in[m]$, where $\mathbf z\in\{0,1\}^{n}$,
and write $\mathrm{MP}(\mathbf z)=1$ when $\mathbf z$ satisfies every covering inequality and $\mathrm{MP}(\mathbf z)=0$ otherwise; equivalently, $\mathrm{MP}(\mathbf z)=0$ exactly when some group is uncovered, $z_{G_{a}}=\mathbf 0$. The \emph{covering instance} is the $F$-gadget of Section~\ref{sec:gadget} with $F=\{\mathbf z:\mathrm{MP}(\mathbf z)=1\}$, the integer points of the covering system.

Write $\deg_{\pm}(g)$ for the \emph{polynomial threshold degree} of a $\{0,1\}$-valued function $g$, the least degree of a polynomial $p$ with $p(\mathbf z)>0$ where $g(\mathbf z)=1$ and $p(\mathbf z)<0$ where $g(\mathbf z)=0$. Let $\mathrm{MP}_{k}$ denote the group-covering function on $k$ groups of $4k^{2}$ variables, that is $\mathrm{MP}_{k}(\mathbf y)=1$ exactly when $\sum_{b=1}^{4k^{2}} y_{ab}\geq 1$ for every $a\in[k]$. We use the following bound (Minsky and Papert~\cite{minsky-papert}).

\begin{theorem}[Minsky--Papert, Section 3.2~\cite{minsky-papert}]\label{thm:mp-degree}
For every $k\geq 1$, $\deg_{\pm}(\mathrm{MP}_{k})\geq k$.
\end{theorem}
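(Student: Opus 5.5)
The plan is the classical Minsky--Papert argument: symmetrize within the groups, then restrict to a carefully chosen univariate curve so that the sign pattern forces many real roots.

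Suppose, for contradiction, that some polynomial $p(\mathbf y)$ of degree $d<k$ sign-represents $\mathrm{MP}_{k}$. That is, $p>0$ where $\mathrm{MP}_{k}=1$ and $p<0$ where $\mathrm{MP}_{k}=0$.

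\emph{Step 1: reduce to multilinear form.} Since $\mathbf y\in\{0,1\}^{n}$, I replace every $y_{ab}^{2}$ by $y_{ab}$. This gives a multilinear $p$ with the same values on the cube and degree at most $d$.

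\emph{Step 2: symmetrize within groups.} Let $\Sigma=S_{4k^{2}}^{k}$ act by permuting coordinates inside each group separately. Define $\bar p(\mathbf y):=\frac{1}{|\Sigma|}\sum_{\sigma\in\Sigma}p(\sigma\mathbf y)$. The value $\mathrm{MP}_{k}(\mathbf y)$ depends only on the group weights $t_a:=\sum_{b}y_{ab}$, so it is $\Sigma$-invariant. Hence every term of the average has the same strict sign as $p(\mathbf y)$, and $\bar p$ still sign-represents $\mathrm{MP}_{k}$.

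Next, $\bar p$ is a linear combination of products $\prod_{a}e_{j_a}(y_{G_a})$ of elementary symmetric polynomials, with $\sum_a j_a\le d$. On $0/1$ inputs, $e_{j}(y_{G_a})=\binom{t_a}{j}$, which is a polynomial of degree $j$ in $t_a$. Therefore there is a polynomial $q(t_1,\dots,t_k)$ of total degree at most $d$ with $\bar p(\mathbf y)=q(t_1,\dots,t_k)$. It satisfies
\[
q(\mathbf t)>0 \ \text{ if all } t_a\ge 1,\qquad q(\mathbf t)<0 \ \text{ if some } t_a=0,
\]
for all integer $\mathbf t\in\{0,\dots,4k^{2}\}^{k}$.

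\emph{Step 3: restrict to a curve.} Define the univariate polynomial
\[
Q(s):=q\bigl((s-1)^{2},(s-3)^{2},\dots,(s-(2k-1))^{2}\bigr),
\]
which has degree at most $2d$. For integers $s\in\{0,1,\dots,2k\}$, each coordinate $(s-(2a-1))^{2}$ is a nonnegative integer at most $(2k)^{2}=4k^{2}$. So each such $s$ corresponds to an admissible weight vector; this is exactly where the group size $4k^{2}$ is used.

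If $s$ is even, every $s-(2a-1)$ is odd and hence nonzero, so all coordinates are at least $1$ and $Q(s)>0$. If $s=2a-1$ is odd, the $a$-th coordinate vanishes and $Q(s)<0$. Thus $Q$ alternates in sign strictly on the $2k+1$ consecutive points $0,1,\dots,2k$. By the intermediate value theorem it has at least $2k$ real roots.

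$Q$ is not identically zero, since it is nonzero at these points. Hence $2d\ge\deg Q\ge 2k$, which contradicts $d<k$.

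\emph{Main obstacle.} The only delicate step is Step 2. One must justify that symmetrization does not raise the degree and that the result is a genuine polynomial in the block weights $t_a$, not just a function of them. I would handle this with the standard fact that a symmetric multilinear polynomial in one group's variables is a combination of elementary symmetric polynomials. Applying this group by group gives the joint representation through products $\prod_a\binom{t_a}{j_a}$. The remaining steps are routine parity and root-counting checks.
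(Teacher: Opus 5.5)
Your proposal is correct, and it is exactly the classical Minsky--Papert ``one-in-a-box'' argument: symmetrize within groups to obtain $q(t_1,\dots,t_k)$ of degree at most $d$, then substitute $t_a=(s-(2a-1))^2$ to force $2k$ sign changes on $\{0,\dots,2k\}$. The paper gives no proof of its own and simply cites Section~3.2 of \emph{Perceptrons}, and your treatment of the symmetrization step (orbit-constant coefficients, $e_j(y_{G_a})=\binom{t_a}{j}$) and of the bound $(s-(2a-1))^2\le 4k^2$ that dictates the group size fills in that argument soundly.
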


We also use that $\deg_{\pm}$ is monotone under coordinate fixing: if $g$ is obtained from $f$ by fixing some variables, then $\deg_{\pm}(g)\leq\deg_{\pm}(f)$, since any polynomial attaining $\deg_{\pm}(f)$ restricts to one for $g$ of no larger degree.

The Monomial lower bound rests on the density estimate of Lemma~\ref{lem:mono-density} below, the Monomial-basis counterpart of Lemma~\ref{lem:parity-rows}. 
Both lower bounds use the following binomial tail estimate: for every integer $N\geq 1$,
\begin{equation}\label{eq:binomial-tail}
\sum_{t\leq N/4}\binom{N}{t}
\;\leq\;
4^{N/4}\bigl(\tfrac{4}{3}\bigr)^{3N/4}
\;=\;
2^{N}\bigl(\tfrac{16}{27}\bigr)^{N/4}.
\end{equation}
Indeed, each term of the expansion $1=(\tfrac14+\tfrac34)^{N}$ with $t\leq N/4$ satisfies $(\tfrac14)^{t}(\tfrac34)^{N-t}\geq(\tfrac14)^{N/4}(\tfrac34)^{3N/4}$, so keeping only these terms and dividing gives \eqref{eq:binomial-tail}.

\begin{lemma}\label{lem:mono-density}
Let $r(\mathbf z)=l(\mathbf z)+\sum_{h=1}^{D}\mu_{h}\,\phi_{h}(\mathbf z)$, where each $\phi_{h}(\mathbf z)=\prod_{p\in S_{h}}z_{p}$ is a Monomial encoding function and $l$ has degree at most one. Suppose $r(\mathbf z)\leq -1$ whenever $\mathrm{MP}(\mathbf z)=1$ and $r(\mathbf z)\geq 1$ whenever $\mathrm{MP}(\mathbf z)=0$. Then $D\geq\tfrac12\,2^{m/2}=2^{\Omega(n^{1/3})}$.
\end{lemma}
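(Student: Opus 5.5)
The plan is a random restriction argument that reduces the statement to Theorem~\ref{thm:mp-degree}. Suppose $r=l+\sum_{h=1}^{D}\mu_h\phi_h$ is a margin separator as in the statement, with $D<\tfrac12 2^{m/2}$. Then $-r$ is a polynomial that sign-represents $\mathrm{MP}$: it is positive where $\mathrm{MP}=1$ and negative where $\mathrm{MP}=0$. I will find a restriction of the variables under which three things hold:
\begin{itemize}
\item $\mathrm{MP}$ becomes a copy of $\mathrm{MP}_{k}$ with $k=m/2$;
\item every monomial $\phi_h$ either vanishes identically or has restricted degree below $m/2$;
\item $l$ stays of degree at most one.
\end{itemize}
Then $-r$ restricts to a polynomial of degree $<k$ that sign-represents $\mathrm{MP}_{k}$. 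Monotonicity of $\deg_{\pm}$ under coordinate fixing, together with Theorem~\ref{thm:mp-degree}, gives $\deg_{\pm}(\mathrm{MP}_k)\ge k$, which is a contradiction.

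\textbf{Construction of the restriction.} Declare the groups $G_1,\dots,G_{m/2}$ live and the groups $G_{m/2+1},\dots,G_m$ dead.
\begin{itemize}
\item In each live group, choose uniformly at random a set $A_a\subseteq G_a$ of size $m^2=4k^2$. Keep these variables free and set the rest of $G_a$ to $0$.
\item In each dead group, choose one coordinate uniformly at random and set it to $1$; set all other coordinates of the group to $0$. Every dead group is then covered.
\end{itemize}
Under this restriction $\mathrm{MP}(\mathbf z)$ becomes exactly the group-covering function on the $k$ live blocks $A_1,\dots,A_k$, each of size $4k^2$. This is $\mathrm{MP}_k$ up to renaming of variables. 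The affine part $l$ restricts to an affine function. Setting coordinates to $1$ never raises degrees, so every monomial with $|S_h|<m/2$ keeps degree $<m/2$ after restriction.

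\textbf{Union bound (main obstacle).} The remaining task is to show that, with positive probability, every monomial with $|S_h|\ge m/2$ is killed, meaning some variable of $S_h$ is set to $0$. A monomial survives only if every coordinate of $S_h$ lies in a kept set $A_a$ or is the chosen $1$-coordinate of a dead group. Within a live group, the probability that $t$ prescribed coordinates all fall into a uniformly random $m^2$-subset of $4m^2$ coordinates is hypergeometric, hence at most $(1/4)^t$. Within a dead group, survival requires $|S_h\cap G_a|\le 1$, with probability at most $1/(4m^2)\le 1/4$ per coordinate. Choices in different groups are independent, so the survival probability of $S_h$ is at most $4^{-|S_h|}\le 2^{-m}$. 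The expected number of surviving high-degree monomials is therefore at most $D\,2^{-m}<1$, so some restriction kills all of them. This gives the contradiction, and even the stronger bound $D\ge 2^{m}$, which certainly implies $D\ge\tfrac12 2^{m/2}$.

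The delicate points I would check carefully are two. First, the hypergeometric bound should be justified by a product of ratios $(m^2-s)/(4m^2-s)\le 1/4$. Second, "degree $<k$" must hold for the whole restricted polynomial, including the constants produced when coordinates are set to $1$. If the clean $4^{-|S|}$ estimate fails somewhere, the fallback is to randomize which $m/2$ groups are live and bound the count of bad configurations using the binomial tail \eqref{eq:binomial-tail}. This route explains the weaker constant $\tfrac12 2^{m/2}$ in the statement.
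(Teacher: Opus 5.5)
Your proof is correct. It follows the same overall strategy as the paper: a random restriction that kills every high-degree Monomial term while reducing $\mathrm{MP}$ to $\mathrm{MP}_{m/2}$, then Theorem~\ref{thm:mp-degree}. The difference is in how the restriction is designed.

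The paper uses an unstructured restriction in which each variable is independently left free or fixed to $0$. A term of degree at least $m/2$ then survives with probability $2^{-|S_h|}\le 2^{-m/2}$. This forces two further steps. First, the paper invokes the binomial tail estimate \eqref{eq:binomial-tail} to ensure every group keeps at least $m^{2}$ free variables. Second, it applies a deterministic fixing $\sigma$: one $1$ in each of the last $m/2$ groups, and trimming the first $m/2$ groups to $m^{2}$ free variables. Monotonicity of $\deg_{\pm}$ then carries the bound for $\mathrm{MP}_{m/2}$ back to $\mathrm{MP}|_{\rho}$.

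You build the target structure into the random restriction itself, so every outcome restricts $\mathrm{MP}$ to exactly $\mathrm{MP}_{m/2}$. The tail estimate, the second fixing step and the monotonicity argument are therefore unnecessary; Theorem~\ref{thm:mp-degree} applies directly. Your per-coordinate survival bound also improves from $1/2$ to $1/4$, and both pieces check out:
\begin{itemize}
\item in a live group, the product $\prod_{s<t}(m^{2}-s)/(4m^{2}-s)\le 4^{-t}$ is valid;
\item in a dead group, survival has probability $1/(4m^{2})$ for $t=1$ and $0$ for $t\ge 2$;
\item the groups are independent, so the per-group bounds multiply to $4^{-|S_h|}$.
\end{itemize}
Consequently you actually obtain $D\ge 2^{m}$, which is stronger than the stated $\tfrac12\,2^{m/2}$, and your fallback paragraph is not needed.

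The one point to state explicitly is the degree count. The hypothesis $m\ge 4$ gives $k=m/2\ge 2$, so the affine part (degree at most $1$) and the surviving low-degree terms (degree at most $k-1$) together have degree at most $k-1<k$.

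What the paper's version buys is uniformity with the parity argument of Lemma~\ref{lem:parity-rows}, which uses the same counting template and the same tail estimate. Your version is shorter and gives a sharper constant.
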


\begin{proof}
Set $d:=m/2$ and suppose, for contradiction, that $D<\tfrac12\,2^{d}$. Let $\mathcal R$ be the set of coordinate fixings that assign each variable $z_{ab}$ one of two states, \emph{free} or \emph{fixed to $0$}; then $|\mathcal R|=2^{mw}$. For $\rho\in\mathcal R$ and any function $f$ on $\{0,1\}^{n}$, let $f|_{\rho}$ denote its restriction to the face of $\rho$: the function of the free variables obtained by substituting the fixed values, so that $f|_{\rho}(\mathbf z)=f(\mathbf z')$ whenever $\mathbf z'$ completes $\rho$ with the free values $\mathbf z$. Let $F_{a}$ be the index set of free variables of group $a$. In particular, $r|_{\rho}$ is a polynomial in the free variables and, since $\rho$ fixes variables to $0$ only, $\mathrm{MP}|_{\rho}$ is again a group-covering indicator: $\mathrm{MP}|_{\rho}(\mathbf z)=1$ exactly when $\sum_{b\in F_{a}}z_{ab}\geq 1$ for every $a\in[m]$.

A term $\phi_{h}$ with $|S_{h}|\geq d$ becomes the zero term under any $\rho$ that fixes some variable of $S_{h}$ to $0$, and remains a term of degree $|S_{h}|$ only when every variable of $S_{h}$ is free. The number of $\rho\in\mathcal R$ leaving $\phi_{h}$ nonzero is therefore $2^{mw-|S_{h}|}\leq 2^{mw-d}$. Summing over the at most $D$ terms of degree at least $d$, the number of $\rho$ under which any such term survives is at most $D\,2^{mw-d}<\tfrac12\,2^{mw}$.

For a fixed group $a$, the number of $\rho$ with $|F_{a}|<m^{2}$ is $2^{(m-1)w}\sum_{t<m^{2}}\binom{w}{t}$. Since $w=4m^{2}$, the tail estimate~\eqref{eq:binomial-tail} with $N=w$, so that $N/4=m^{2}$, gives $\sum_{t<m^{2}}\binom{w}{t}\leq 2^{w}(16/27)^{m^{2}}$, and the count is therefore at most $2^{mw}(16/27)^{m^{2}}$. Over the $m$ groups, at most $m\,2^{mw}(16/27)^{m^{2}}$ fixings leave some group with fewer than $m^{2}$ free variables.
We have that $m\,(16/27)^{m^{2}}<\tfrac14$ for every $m\geq 2$: the value at $m=2$ is $2\,(16/27)^{4}<\tfrac14$ and the ratio of consecutive values, $\tfrac{m+1}{m}(16/27)^{2m+1}\leq \tfrac{3}{2}(16/27)^5$, is below one. Therefore, the two counts sum to fewer
than $(\tfrac12+\tfrac14)\,2^{mw}<2^{mw}$. Hence some $\rho\in\mathcal R$ retains no term of degree at least $d$ and gives every group at least $m^{2}$ free variables. Fix such a $\rho$.

We claim that $\mathrm{MP}_{m/2}$ is a fixing of $\mathrm{MP}|_{\rho}$. Define a fixing $\sigma$ of the free variables as follows: in each group $a>m/2$, set one free variable to $1$ and the remaining free variables of that group to $0$; in each group $a\leq m/2$, fix all but $4(m/2)^{2}=m^{2}$ of the free variables to $0$, which is possible since $|F_{a}|\geq m^{2}$. Under $\sigma$, every group $a>m/2$ is covered outright, while a group $a\leq m/2$ is covered exactly when one of its $m^{2}$ surviving variables equals $1$. Hence $(\mathrm{MP}|_{\rho})|_{\sigma}=\mathrm{MP}_{m/2}$, the group-covering function on $m/2$ groups of $4(m/2)^{2}$ variables. By Theorem~\ref{thm:mp-degree}, $\deg_{\pm}(\mathrm{MP}_{m/2})\geq m/2$, and since $\deg_{\pm}$ is monotone under fixing,
$
\deg_{\pm}(\mathrm{MP}|_{\rho})
\;\geq\;
\deg_{\pm}\!\big((\mathrm{MP}|_{\rho})|_{\sigma}\big)
\;=\;
\deg_{\pm}(\mathrm{MP}_{m/2})
\;\geq\; m/2.
$

Every completion of $\rho$ on the free variables is a point of $\{0,1\}^{n}$, so the hypotheses give $r|_{\rho}(\mathbf z)\leq -1$ where $\mathrm{MP}|_{\rho}(\mathbf z)=1$ and $r|_{\rho}(\mathbf z)\geq 1$ where $\mathrm{MP}|_{\rho}(\mathbf z)=0$. Thus $-r|_{\rho}$ is positive where $\mathrm{MP}|_{\rho}=1$ and negative where $\mathrm{MP}|_{\rho}=0$, so $\deg(r|_{\rho})\geq\deg_{\pm}(\mathrm{MP}|_{\rho})\geq m/2$. But $r|_{\rho}$ retains no term of degree at least $d$, and $l$ has degree at most one, so $\deg(r|_{\rho})\leq d-1<m/2$, a contradiction. Therefore $D\geq\tfrac12\,2^{d}=\tfrac12\,2^{m/2}=2^{\Omega(n^{1/3})}$.
\end{proof}

\begin{proof}[Proof of Theorem~\ref{thm:mono-vs-refl}]
\emph{Tractability.} Consider a linear objective on $X^{i}$. On the $\tau=1$ part the constraints are the covering inequalities $\sum_{b=1}^{w} z_{ab}\geq 1$, one per group; since the groups occupy disjoint coordinates, the objective separates across groups and each group is optimized in $O(w)$ time by selecting its negative-cost variables and, if none is selected, the single variable of least cost. Comparing with the value of the origin gives the optimum in $O(n)$ time. On $X^{j}$ the $\tau=1$ part is $\bigcup_{a=1}^{m}\{\mathbf z:z_{G_{a}}=\mathbf 0\}$, a union of $m$ subcubes; optimizing over each and comparing with the origin gives the optimum in $O(mn)$ time. Both oracles are polynomial in $n$. Finally, by Lemma~\ref{lem:gadget}, the coupled set is the singleton $\{(0,\mathbf 0)\}$, so $\mathrm{OPT}=0$ and the coupled set is trivial to optimize over.

\emph{Reflected upper bound.} For each group $a\in[m]$ take the Reflected encoding function $\phi^{R}_{G_{a}}(\mathbf z)=\prod_{b=1}^{w}(1-z_{ab})$, the indicator that group $a$ is uncovered (all of its variables are $0$). Then $\sum_{a}\phi^{R}_{G_{a}}(\mathbf z)$ counts the uncovered groups, so it is $0$ when $\mathbf z$ is feasible ($\mathrm{MP}(\mathbf z)=1$) and at least $1$ when $\mathbf z$ is infeasible ($\mathrm{MP}(\mathbf z)=0$). Hence $r(\mathbf z):=-1+2\sum_{a=1}^{m}\phi^{R}_{G_{a}}(\mathbf z)$ is a margin separator for $F$ built from $m$ Reflected functions, and by Lemma~\ref{lem:gadget} these $m=(n/4)^{1/3}$ rows close the gap.

\emph{Monomial lower bound.} By Lemma~\ref{lem:gadget}, a finite set $\mathcal C$ of Monomial rows certifies $\mathrm{OPT}=0$ if and only if there is a margin separator for $F$, namely an affine function of $\mathbf z$ plus a $\mu$-combination of the functions $\phi_{h}(1,\cdot)$, $h\in\mathcal C$, satisfying \eqref{eq:margin}, that is $r(\mathbf z)\leq -1$ where $\mathrm{MP}(\mathbf z)=1$ and $r(\mathbf z)\geq 1$ where $\mathrm{MP}(\mathbf z)=0$. Fixing $\tau=1$ in a Monomial encoding function yields a Monomial function of $\mathbf z$ or a constant absorbed into the affine part, so $r$ has the form of Lemma~\ref{lem:mono-density} with at most $|\mathcal C|$ Monomial terms, and the displayed conditions are its hypotheses. Lemma~\ref{lem:mono-density} gives $|\mathcal C|\geq\tfrac12\,2^{m/2}=2^{\Omega(n^{1/3})}$.
\end{proof}

\begin{proof}[Proof of Corollary~\ref{cor:refl-vs-mono}]
Let $F$ be the feasible set of the covering instance of Theorem~\ref{thm:mono-vs-refl} and consider the $\kappa(F)$-gadget, where $\kappa(F)$ is the integer point set of the complemented system $\sum_{b=1}^{w}(1-z_{ab})\geq 1$, equivalently $\sum_{b=1}^{w}z_{ab}\leq w-1$, $a\in[m]$ (no group is fully selected). By the complementation symmetry of Section~\ref{sec:gadget}, this gadget is tractable and its coupled set is the singleton $\{(0,\mathbf 0)\}$, and $k$ rows from a family close its gap if and only if $k$ rows from the complemented family close the gap of the covering instance. Since $\kappa$ exchanges the Monomial and Reflected functions, the Reflected upper bound of Theorem~\ref{thm:mono-vs-refl} yields a Monomial upper bound of $(n/4)^{1/3}$ rows for the $\kappa(F)$-gadget, and its Monomial lower bound yields a Reflected lower bound of $\tfrac12\,2^{m/2}$.
\end{proof}

\begin{proof}[Proof of Theorem~\ref{thm:gen-strict-mono-refl}]
On the covering instance the $m$ Reflected functions of the upper bound are Generalized functions, so by Proposition~\ref{prop:gen-dominates} the Generalized family closes the gap with $m=(n/4)^{1/3}$ rows, whereas Theorem~\ref{thm:mono-vs-refl} forces $2^{\Omega(n^{1/3})}$ Monomial rows. On the instance of Corollary~\ref{cor:refl-vs-mono} the $m$ Monomial functions are likewise Generalized functions, so the Generalized family closes the gap with $(n/4)^{1/3}$ rows, whereas the Reflected family requires $2^{\Omega(n^{1/3})}$.
\end{proof}

\subsection{Vertex versus Generalized}
\label{sec:vertex-gen}
Fix $n\geq 6$ divisible by 3, let $R=\{1,\dots,n\}$, and partition $R$ into three equal parts $S^{+},S^{-},W$ of size $n/3$. Define the following special faces of the cube: $C_{1}:=\{\mathbf z:z_{S^{+}}=\mathbf 1,\ z_{S^{-}}=\mathbf 0\}$, $C_{2}:=\{\mathbf z:z_{S^{+}}=\mathbf 0,\ z_{S^{-}}=\mathbf 1\}$,
each with the coordinates in $W$ free, so $|C_{1}|=|C_{2}|=2^{n/3}$. This \emph{opposing-face instance} is the $F$-gadget with $F=C_{1}\cup C_{2}$.
\begin{proof}[Proof of Theorem~\ref{thm:vertex-vs-gen}]
\emph{Tractability.} Over the $\tau=1$ part of $X^{i}$ optimize a linear objective over $C_{1}$ and over $C_{2}$ separately, each a subcube, and take the better; over $X^{j}$ optimize over the full cube and, if the optimum lands in $C_{1}\cup C_{2}$, flip the cheapest coordinate that leaves $C_{1}\cup C_{2}$. Comparing with the origin gives both oracles in time $O(n)$.

\emph{Generalized upper bound.} The functions $\phi^{G}_{S^{+},S^{-}}$ and $\phi^{G}_{S^{-},S^{+}}$ are the indicators of $C_{1}$ and $C_{2}$, each a Generalized encoding function carrying $n/3$ uncomplemented and $n/3$ complemented coordinates. Since $C_{1}$ and $C_{2}$ are disjoint, $r(\mathbf z):=1-2\phi^{G}_{S^{+},S^{-}}(\mathbf z)-2\phi^{G}_{S^{-},S^{+}}(\mathbf z)$ equals $-1$ on $C_{1}\cup C_{2}=F$ and $+1$ on $\bar F$, hence is a margin separator with two Generalized functions, and Lemma~\ref{lem:gadget} closes the gap with these two rows. Neither function is Monomial, Reflected, or Vertex: each mixes uncomplemented and complemented coordinates and has support $2n/3<n$.

\emph{Vertex lower bound.} We show that closing the gap with Vertex rows needs at least $2^{n/3}$ of them.

A Vertex row uses a point indicator $\phi_{\mathbf s}(\mathbf z)=\mathbf 1\{\mathbf z=\mathbf s\}$, which is $1$ at the single signature $\mathbf s$ and $0$ everywhere else.

By Lemma~\ref{lem:gadget}, if a set $\mathcal C$ of Vertex rows closes the gap, there is a margin separator $r(\mathbf z)=\ell(\mathbf z)+\sum_{h}\mu_{h}\,\phi_{h}(1,\mathbf z)$ with $\ell$ affine. The restriction at $\tau=1$ of a Vertex indicator $\phi_{\mathbf s}$, $\mathbf s\in\{0,1\}^{Q}$, is $\mathbf 1\{\mathbf z=\mathbf s_{R}\}$ if $s_{0}=1$ and the constant $0$ if $s_{0}=0$. Hence, $r$ can be written as $r(\mathbf z)=\ell(\mathbf z)+\sum_{h}\mu_{h}\,\mathbf 1\{\mathbf z=\mathbf s_{h}\}$, and satisfies $r(\mathbf z)\leq -1$ for $\mathbf z\in F$ and $r(\mathbf z)\geq 1$ for $\mathbf z\in\bar F$.

Let $E=\{\mathbf s_{h}\}$ be the set of signatures used by these indicators, so $|E|\leq|\mathcal C|$. At any point $\mathbf z\notin E$ all indicators vanish, so $r(\mathbf z)=\ell(\mathbf z)$ there. The requirements on $r$ therefore become requirements on the affine function alone: $\ell(\mathbf z)\leq -1$ for $\mathbf z\in F\setminus E$ and $\ell(\mathbf z)\geq 1$ for $\mathbf z\in\bar F\setminus E$. That is, $\ell$ must be negative on the two tails $C_{1},C_{2}$ and positive in the middle $\bar F$, except at the points of $E$, which the indicators are free to correct. It thus suffices to show that any affine $\ell$ violates these sign requirements at no fewer than $2^{n/3}$ points; each such point must lie in $E$, giving $|\mathcal C|\geq|E|\geq 2^{n/3}$.

Fix the coordinates in $W$ to some value $\mathbf y\in\{0,1\}^{W}$, and consider the four signatures that share this $\mathbf y$: $ \mathbf u_{y}:=(\mathbf 1_{S^{+}},\mathbf 0_{S^{-}},\mathbf y),\ \mathbf v_{y}:=(\mathbf 0_{S^{+}},\mathbf 1_{S^{-}},\mathbf y),\ \mathbf m_{y}:=(\mathbf 1_{S^{+}},\mathbf 1_{S^{-}},\mathbf y),\ \mathbf m'_{y}:=(\mathbf 0_{S^{+}},\mathbf 0_{S^{-}},\mathbf y). $ Here $\mathbf u_{y}\in C_{1}$ and $\mathbf v_{y}\in C_{2}$, so both lie in $F$ and $\ell$ should be $\leq -1$ on them; while $\mathbf m_{y},\mathbf m'_{y}\in\bar F$, so $\ell$ should be $\geq 1$ on them.

Coordinate by coordinate, $\mathbf u_{y}+\mathbf v_{y}=\mathbf m_{y}+\mathbf m'_{y}$. Since $\ell$ is affine, we have $\ell(\mathbf u_{y})+\ell(\mathbf v_{y})=\ell(\mathbf m_{y})+\ell(\mathbf m'_{y})$. If $\ell$ had the correct sign at all four points, the left side would be at most $-2$ and the right side at least $+2$, which is impossible. Hence $\ell$ has the wrong sign at one of the four points, and that point lies in $E$.

The four points all carry the same $W$-block $\mathbf y$, so the quadruples for different $\mathbf y$ are disjoint. There are $2^{n/3}$ choices of $\mathbf y$, each forcing a distinct point into $E$. Therefore $|E|\geq 2^{n/3}$, and so $|\mathcal C|\geq 2^{n/3}$.
\end{proof}

\subsection{The Generalized family is not universal}
\label{sec:gen-not-universal}

We begin by introducing some definitions and the tool used in the proof. A real polynomial $p$ \emph{sign-represents} a function $g\colon\B^{n}\to\{-1,+1\}$ if $\mathrm{sign}\,p(\mathbf x)=g(\mathbf x)$ for every $\mathbf x\in\B^{n}$. The \emph{sign degree} of $g$ is the minimum degree of any polynomial that sign-represents $g$.

\begin{theorem}[Minsky--Papert, Section 3.1~\cite{minsky-papert}]\label{thm:minsky-papert-parity}
The sign degree of the parity function $\mathbf x\mapsto-(-1)^{|\mathbf x|}$ on $n\geq 1$ variables equals $n$, where $|\mathbf x|:=\sum_{q=1}^{n}x_{q}$ denotes the Hamming weight of $\mathbf x\in\B^{n}$.
\end{theorem}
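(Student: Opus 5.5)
The plan is to prove the two inequalities separately, using the standard Fourier (character) argument over $\B^{n}$. Throughout, write $g(\mathbf x):=-(-1)^{|\mathbf x|}$ and $\chi(\mathbf x):=(-1)^{|\mathbf x|}=\prod_{q=1}^{n}(1-2x_{q})$.

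\emph{Upper bound.} The polynomial $-\chi(\mathbf x)=-\prod_{q=1}^{n}(1-2x_{q})$ has degree $n$. It equals $g$ pointwise on $\B^{n}$, so it sign-represents $g$ trivially. Hence the sign degree is at most $n$.

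\emph{Lower bound.} Suppose a polynomial $p$ of degree $d<n$ sign-represents $g$. On $\B^{n}$ we have $x_q^{2}=x_q$, so we may replace $p$ by its multilinear reduction. This changes no values on the cube and does not increase the degree. Then $p$ is a linear combination of monomials $\prod_{q\in S}x_{q}$ with $|S|\leq d<n$. Each such monomial omits some coordinate $q_{0}\notin S$. Pair every $\mathbf x$ with the point obtained by flipping $x_{q_{0}}$. The monomial takes the same value at both points, while $\chi$ changes sign. Summing over the cube therefore gives $\sum_{\mathbf x\in\B^{n}}\chi(\mathbf x)\prod_{q\in S}x_{q}=0$. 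By linearity,
\[
  \sum_{\mathbf x\in\B^{n}} p(\mathbf x)\,\chi(\mathbf x)=0 .
\]
On the other hand, sign-representation means $p(\mathbf x)g(\mathbf x)>0$ for every $\mathbf x$, that is, $p(\mathbf x)\chi(\mathbf x)<0$ at all $2^{n}$ points. Then the sum is strictly negative, which is a contradiction. Hence $d\geq n$, and together with the upper bound the sign degree is exactly $n$.

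The only step needing care is the orthogonality identity, and in particular the reduction to multilinear form, which is what guarantees that each monomial misses a variable. As a fallback, I would use Minsky and Papert's original symmetrization. First average $p$ over all coordinate permutations; the average still sign-represents $g$, because $g$ is symmetric and the averaged values all have the same sign. The averaged polynomial is a univariate polynomial $P(t)$ in $t=|\mathbf x|$ with $\deg P\leq d$. Its values at $t=0,1,\dots,n$ strictly alternate in sign, so $P$ has at least $n$ real roots, which forces $d\geq n$.
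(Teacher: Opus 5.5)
Your proof is correct. The paper gives no argument of its own for this statement; it only cites Minsky and Papert, Section 3.1. Their proof is essentially your fallback. By group invariance, the coefficients of a sign-representing multilinear polynomial may be taken constant on permutation orbits. On the cube the polynomial then becomes $\sum_{k\le d}c_k\binom{|\mathbf x|}{k}$, a univariate polynomial of degree at most $d$ in $|\mathbf x|$ that must change sign $n$ times on $0,1,\dots,n$.

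Your main argument is genuinely different. It uses $\chi(\mathbf x)=(-1)^{|\mathbf x|}$ itself as a dual witness: $\chi$ is orthogonal on $\{0,1\}^{n}$ to every multilinear monomial of degree less than $n$, while $p\chi<0$ at every point for any sign-representation $p$. This route buys the following.
\begin{itemize}
\item It is more elementary and fully self-contained. It needs neither group invariance nor the identity $\sum_{|S|=k}\prod_{q\in S}x_q=\binom{|\mathbf x|}{k}$ on the cube, which your fallback uses tacitly.
\item It gives slightly more. Only $\mathbf x=\mathbf 1$ contributes to $\sum_{\mathbf x}\chi(\mathbf x)\prod_{q=1}^{n}x_q$. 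Hence the coefficient of $\prod_{q=1}^{n}x_q$ in the multilinear form of any sign-representation equals $(-1)^{n+1}\sum_{\mathbf x}|p(\mathbf x)|$, so it is nonzero with a forced sign.
\end{itemize}

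Symmetrization buys generality instead. It lower-bounds the sign degree of every symmetric function by the number of sign changes of its weight profile. Its blockwise version is what Minsky and Papert use in Section 3.2 for Theorem~\ref{thm:mp-degree}. There the analogous one-line witness, the $\pm1$ form of the group-covering function, is not even orthogonal to the constants, so your orthogonality trick does not transfer to that companion result.
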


\begin{lemma}\label{lem:parity-rows}
Let $n\geq 12$ and let $ r(\mathbf{z})=l(\mathbf{z})+\sum_{h=1}^{m}\mu_h\phi_h(\mathbf{z}), $ where each $\phi_h$ is a Generalized encoding function and $l$ has degree at most one. Suppose $r(\mathbf{z})\leq -1$ for $|\mathbf{z}|$ even and $r(\mathbf{z})\geq 1$ for $|\mathbf{z}|$ odd. Then $m\geq\tfrac12(4/3)^{\lceil n/4\rceil}=2^{\Omega(n)}$.
\end{lemma}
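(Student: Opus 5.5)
The plan is a random-restriction argument in the style of Lemma~\ref{lem:mono-density}, with the Minsky--Papert parity bound (Theorem~\ref{thm:minsky-papert-parity}) playing the role of Theorem~\ref{thm:mp-degree}. The difference is that Generalized terms contain both positive and complemented literals. A restriction that only fixes variables to $0$ no longer kills wide terms, because a complemented literal survives such a fixing. So I will use restrictions that fix variables to $0$ or to $1$.

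Set $d:=\lceil n/4\rceil$ and suppose, for contradiction, that $m<\tfrac12(4/3)^{d}$. Consider the uniform random restriction $\rho$ in which each coordinate independently is \emph{free} with probability $1/2$, \emph{fixed to $0$} with probability $1/4$, and \emph{fixed to $1$} with probability $1/4$.

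First, take a Generalized term $\phi_{S^{+},S^{-}}$ of width $t:=|S^{+}|+|S^{-}|\geq d$. Each of its $t$ literals is independently made false with probability $1/4$. If any literal is made false, the term becomes the zero function. Hence the probability that it survives is $(3/4)^{t}\leq(3/4)^{d}$. A union bound over the at most $m$ wide terms shows that, with probability less than $m(3/4)^{d}<\tfrac12$, some term of width at least $d$ survives. A term of width below $d$ restricts to a product of literals of degree at most $d-1$, and $l|_{\rho}$ has degree at most $1\leq d-1$. So on the complementary event $\deg(r|_{\rho})\leq d-1$.

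Second, the number $N$ of free coordinates is $\mathrm{Bin}(n,1/2)$. By the tail estimate~\eqref{eq:binomial-tail},
\[
\Pr[N<d]\;\leq\;2^{-n}\sum_{t\leq n/4}\binom{n}{t}\;\leq\;(16/27)^{n/4}.
\]
For $n\geq 12$ this is at most $(16/27)^{3}<\tfrac12$. Therefore some $\rho$ has at least $d$ free coordinates and leaves no surviving term of width at least $d$. Fix such a $\rho$, with free set $F_\rho$ of size $N\geq d$.

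Third, under $\rho$ the parity of $|\mathbf z|$ becomes $\pm$ the parity of the free variables, with the sign determined by the number of coordinates fixed to $1$. The hypotheses on $r$ hold at every completion of $\rho$. Hence $r|_{\rho}$ or $-r|_{\rho}$ sign-represents the function $\mathbf x\mapsto-(-1)^{|\mathbf x|}$ on the $N$ free variables, with margin at least $1$, so the sign is strict. By Theorem~\ref{thm:minsky-papert-parity}, $\deg(r|_{\rho})\geq N\geq d$. This contradicts $\deg(r|_{\rho})\leq d-1$, so $m\geq\tfrac12(4/3)^{\lceil n/4\rceil}$.

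The main obstacle is choosing restriction probabilities that balance the two requirements: wide terms must die with probability exponential in their width, while enough variables stay free for the parity bound to bite. The choice $(1/2,1/4,1/4)$ works because parity needs degree equal to the full number of free variables, not just a constant fraction of it. The only remaining check is that $n\geq12$ makes the second failure probability below $\tfrac12$, which the computation above gives.
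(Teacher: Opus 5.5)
Your proof is correct and is essentially the paper's argument. The paper's uniform labeling over $\mathrm{free}_0,\mathrm{free}_1,\mathrm{fix}_0,\mathrm{fix}_1$ is exactly your $(1/2,1/4,1/4)$ restriction, and it then uses the same union bound $m(3/4)^{d}<\tfrac12$, the same binomial tail estimate for the number of free coordinates, and the same contradiction with the Minsky--Papert parity bound. The differences are only cosmetic: you phrase it probabilistically rather than by counting, and you require $N\geq d$, which for integer $N$ is equivalent to the paper's $n'\geq n/4$.
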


\begin{proof}[Proof of Lemma~\ref{lem:parity-rows}]
The hypothesis says exactly that $\textup{sign}\, r(\mathbf{z})=-(-1)^{|\mathbf{z}|}$ with $|r(\mathbf{z})|\geq 1$ for all $\mathbf{z}\in\{0,1\}^n$; in particular, $r$ sign-represents parity. Let $S_h\subseteq\{1,\dots,n\}$ be the support of $\phi_h$ (i.e., the set of coordinates $S^+ \cup S^-$ in its definition), so that $\deg\phi_h=|S_h|$.

Set $d=\lceil n/4\rceil\geq 2$. Let $m_{\geq d}$ denote the number of rows $\phi_h$ with $|S_h|\geq d$. We will prove that $m_{\geq d} \geq \tfrac12(4/3)^d$. Since $m \geq m_{\geq d}$, this proves the result.

Recall that each row $\phi_h$ may be written as $\phi_h(\mathbf z)=\prod_{p\in S_h}\mathbf{1}\{z_p=a_p\}$ for $a_p\in\{0,1\}$. We say a fixed coordinate $p$ \emph{annihilates} $\phi_h$ if its assigned value forces this product to be identically $0$, that is, if the factor $\mathbf{1}\{z_p=a_p\}$ becomes the constant $0$.

Consider the set $\mathcal R$ of coordinate labelings that assign to each coordinate $p\in\{1,\dots,n\}$ one of four labels, $\mathrm{free}_{0}$, $\mathrm{free}_{1}$, $\mathrm{fix}_0$, or $\mathrm{fix}_1$, so that $|\mathcal R|=4^{n}$. A labeling $\rho$ acts as a restriction: coordinates labeled $\mathrm{free}_{0}$ or $\mathrm{free}_{1}$ remain free, and the others are fixed to the indicated bit; the two free labels are duplicates whose only purpose is to make the four labels equally numerous. The fixed
coordinates become constants, $r|_{\rho}$ is a polynomial in the $n'=n'(\rho)$ free coordinates, and we call the corresponding subcube the \emph{face}.

Fix a row $\phi_{h}$ with support of size $k\geq d$ and a support coordinate $p$, whose factor is $\mathbf 1\{z_{p}=a_{p}\}$. Among the four labels of $p$, exactly one, namely $\mathrm{fix}_{1-a_{p}}$, annihilates $\phi_{h}$. Since a labeling chooses one label per coordinate, the number of $\rho\in\mathcal R$ under which $\phi_{h}$ is annihilated by none of its $k$ support coordinates is the product of the per-coordinate counts, three at each support coordinate and four elsewhere: $3^{k}4^{n-k}\leq(3/4)^{d}4^{n}$.

Suppose, for contradiction, that $m_{\geq d}<\tfrac12(4/3)^{d}$. Since $(4/3)^{d}(3/4)^{d}=1$, summing the survival counts over the rows with $|S_{h}|\geq d$ shows that fewer than $\tfrac12\,4^{n}$ labelings leave some such row not annihilated.

Next, a labeling with free set of size $t$ is determined by the free set, one of two free labels on each free coordinate, and one of two bits on each fixed coordinate, so the number of $\rho$ with $n'<n/4$ is $2^{n}\sum_{t<n/4}\binom{n}{t}$. The tail  estimate~\eqref{eq:binomial-tail} with $N=n$ gives $\sum_{t<n/4}\binom{n}{t}\leq 2^{n}(16/27)^{n/4}$, so at most $4^{n}(16/27)^{n/4}$ labelings satisfy $n'<n/4$, and $(16/27)^{n/4}\leq(16/27)^{3}<\tfrac14$ for $n\geq 12$. The two counts sum to fewer than $(\tfrac12+\tfrac14)\,4^{n}<4^{n}$ labelings, so some $\rho\in\mathcal R$ annihilates every row with $|S_{h}|\geq d$ and satisfies $n'\geq n/4$. Fix such a $\rho$.

On its face, $r|_\rho$ is the affine part plus rows with $|S_h|<d$, so $\deg(r|_\rho)\leq d-1< n/4\leq n'$. Every assignment to the free coordinates, together with the bits fixed by $\rho$, is a point of $\{0,1\}^{n}$, so each value of $r|_{\rho}$ on the face is a value of $r$; in particular $r|_{\rho}(\mathbf z)\leq -1$ or $r|_{\rho}(\mathbf z)\geq 1$ at every point of the face. Moreover, fixing the assigned coordinates changes the parity of a point only by the constant amount contributed by those bits, so on the face $r|_\rho$ sign-represents the parity of the $n'$ free coordinates, up to a single global sign determined by the assigned bits. By Theorem~\ref{thm:minsky-papert-parity}, sign-representing parity on $n'$ variables requires degree $n'$, contradicting $\deg(r|_\rho)<n'$. Therefore $m\geq m_{\geq d}\geq\tfrac12(4/3)^{\lceil n/4\rceil}=2^{\Omega(n)}$.
\end{proof}

\begin{proof}[Proof of Theorem~\ref{thm:generalized-encoding-lower-bound}]
Let $E_n:=\{\mathbf{z}\in\B^n:\sum_{p=1}^n z_p\text{ is even}\}$ and $O_n:=\{\mathbf{z}\in\B^n:\sum_{p=1}^n z_p\text{ is odd}\}$. The \emph{parity instance} is the $F$-gadget of Section~\ref{sec:gadget} with $F=E_n$ (so $\bar F=O_n$); that is, the edge $e=ij$ carries the two endpoint blocks $X^i := \{(0,\mathbf{0})\} \cup \{(1,\mathbf{z}):\mathbf{z}\in E_n\}$ and $X^j := \{(0,\mathbf{0})\} \cup \{(1,\mathbf{z}):\mathbf{z}\in O_n\}$, with local costs $f_i(\mathbf{x}^{i})=-\tau$ and $f_j(\mathbf{x}^{j})=-\tau$.

\emph{Tractability.} Given a linear objective, optimizing over the $\tau=1$ part of $X^i$ reduces to optimizing over the even-parity vectors; this is done by taking the coordinatewise best vector and, if its parity is wrong, flipping a coordinate of minimum loss, and then comparing with the value of the origin. The same argument applies to $X^j$ with odd parity. Thus, each endpoint block admits an $\mathcal O(n)$ linear-optimization oracle. By Lemma~\ref{lem:gadget}, the coupled set is the singleton $X^i\cap X^j=\{(0,\mathbf{0})\}$, so $\mathrm{OPT}=0$ and the coupled set is trivial to optimize over.

\emph{Generalized lower bound.} By Lemma~\ref{lem:gadget}, a finite set $\mathcal C$ of Generalized encoding rows certifies $\mathrm{OPT}=0$ if and only if there is a margin separator for $F=E_n$, namely an affine function of $\mathbf{z}$ plus a $\mu$-combination of the functions $\phi_h(1,\cdot)$, $h\in\mathcal C$, satisfying \eqref{eq:margin}, that is
$r(\mathbf{z})\leq -1$ for $\mathbf{z}\in E_n$ and $r(\mathbf{z})\geq 1$ for $\mathbf{z}\in O_n$. Fixing $\tau=1$ in a Generalized encoding function yields either a constant, which is absorbed into the affine part, or again a Generalized encoding function of $\mathbf{z}$. Hence $r$ has exactly the form treated in Lemma~\ref{lem:parity-rows}, with at most $|\mathcal C|$ Generalized terms, and the displayed conditions are exactly its hypothesis. Lemma~\ref{lem:parity-rows} then forces $|\mathcal C| \geq \tfrac12(4/3)^{\lceil n/4\rceil} =2^{\Omega(n)}$ and completes the proof.
\end{proof}

For the same instance, define the parity encoding $\phi^{\mathrm{par}}(\tau,\mathbf z) := \tau (-1)^{\sum_{p=1}^n z_p}$. Thus $\phi^{\mathrm{par}}(0,\mathbf 0)=0$, while $\phi^{\mathrm{par}}(1,\mathbf z)=1$ for $\mathbf z\in E_n$ and $\phi^{\mathrm{par}}(1,\mathbf z)=-1$ for $\mathbf z\in O_n$.


\begin{proof}[Proof of Proposition~\ref{prop:parity-upper-bound}]
Set $r(\mathbf z):=-\phi^{\mathrm{par}}(1,\mathbf z)$, so that $r(\mathbf z)=-1$ for $\mathbf z\in E_n$ and $r(\mathbf z)=1$ for $\mathbf z\in O_n$. This is a margin separator for $F=E_n$ in the sense of \eqref{eq:margin}, built from the single encoding function $\phi^{\mathrm{par}}$ (with no affine part). By Lemma~\ref{lem:gadget}, applied to the single-function family $\{\phi^{\mathrm{par}}\}$, the one row $\phi^{\mathrm{par}}$ certifies $\mathrm{OPT}=0$, that is, it closes the gap. By Theorem~\ref{thm:generalized-encoding-lower-bound}, every set of Generalized rows closing the gap of this instance has cardinality at least $\tfrac12(4/3)^{\lceil n/4\rceil}>1$ for $n\geq 12$, whereas the parity row closes it alone. Hence the parity row is not affinely equivalent to any single Generalized row, nor to any subexponential collection of them.
\end{proof}

\section{Test instances and computational setup}
\label{sec:test-instances}

\subsection{Problem type}
\paragraph{Stable-set instances.}
\label{subsec:test-stable-set}

For the stable-set family, each block $i\in V$ contains a local conflict graph $G_i=(U_i,F_i)$, where $U_i$ is the set of local binary variables and $F_i$ is the set of local conflict edges. A local feasible solution is a stable set of $G_i$: $ X^i_{\mathrm{SS}}:=\left\{\mathbf{x}^i\in\B^{U_i}:\ x_u^i+x_v^i\le 1 \quad \forall\,\{u,v\}\in F_i\right\}$.

Arranging the blocks above on a tree $T=(V,E)$, the full coupled stable-set instance is then:
\begin{equation*}
\begin{aligned}
    \max \quad
        & \sum_{i\in V} \sum_{v\in U_i} x^i_v\\
    \text{s.t.}\quad
        & \mathbf{x}^i\in X^i_{\mathrm{SS}},
            && i\in V,\\
        & x_p^{e,i}=x_p^{e,j},
            && e=ij\in E,\ p\in Q^e.
\end{aligned}
\end{equation*}
The local conflict constraints remain inside the blocks, while the equalities identify selected variables across adjacent blocks.

In~\cite{cdx-ipco} it was shown that, for packing IPs, requiring the projection of the feasible region onto the boundary variables to agree across every edge yields convergence of the monomial method, with explicit approximation guarantees for subfamilies of monomial encodings of bounded degree. Moreover, following Remark~\ref{rem:modeling}, we introduce a simple conflict-propagation step along the coupling edges before constructing the local optimization models. Consider an edge $e=ij\in E$ and two boundary variables $p,q\in Q^e$. If the corresponding variables are adjacent in the local conflict graph of block $i$, then the same conflict is inserted between the paired boundary variables of block $j$, and vice verse. This propagation is applied in both directions over all block-tree edges until no new local conflict edge is added. This conflict resolution achieves that the projections of the two blocks onto the shared boundary $Q^e$ agree. The preprocessing does not change the feasible solutions of the complete coupled instance, but it strengthens the individual block models.

\paragraph{Dominating-set instances.}\label{subsec:test-dominating-set} For the dominating-set family, each block $i\in V$ again contains a local graph $G_i=(U_i,F_i)$. For $v\in U_i$, let $N_i(v):=\{u\in U_i:\{u,v\}\in F_i\}$ be the open neighborhood of $v$ inside block $i$. A local feasible solution is a dominating set of $G_i$: $ X^i_{\mathrm{DS}}:= \left\{         \mathbf{x}^i\in\B^{U_i}: x^i_v+ \sum_{u\in N_i(v)} x_u^i\ge 1 \quad \forall v\in U_i \right\}$.

Arranging the blocks above on a tree $T = (V, E)$, the full coupled dominating-set instance is then:
\begin{equation*}
\begin{aligned}
    \min \quad
        & \sum_{i\in V} \sum_{v\in U_i} x^i_v \\
    \text{s.t.}\quad
        & \mathbf{x}^i\in X^i_{\mathrm{DS}},
            && i\in V,\\
        & x_p^{e,i}=x_p^{e,j},
            && e=ij\in E,\ p\in Q^e.
\end{aligned}
\end{equation*}
As in the stable-set case, the only constraints coupling different blocks are the equality constraints on the selected boundary variables.

For the dominating-set instances, we use the same principle as in Remark~\ref{rem:modeling}: implications induced by the coupling equalities are made explicit whenever they can be expressed as local constraints. Here, this is achieved by a simple neighborhood-dominance preprocessing step along each coupling edge. For an edge $e=ij\in E$ and a boundary variable $p\in Q^e$, let  $N_i(p)$ and $N_j(p)$ denote the open neighborhoods of the paired local variables in blocks $i$ and $j$. If $N_j(p)$ lies entirely in the boundary of $e$, it can be translated through the coupling map to block $i$. Whenever the translated neighborhood is a strict subset of $N_i(p)$, we replace the local neighborhood of $p$ in block $i$ by this smaller neighborhood. In terms of domination constraints, this replaces $x_p^{e,i}+\sum_{u\in N_i(p)} x_u^i \geq 1$ by the stronger local inequality $x_p^{e,i} +\sum_{u\in  N'_j(p)} x_u^i \geq 1$, where $N'_j(p)\subsetneq N_i(p)$ is the neighborhood of the paired variable in block $j$, translated to the local indexing of block $i$. The procedure is applied in both directions over all block-tree edges until no neighborhood can be reduced further. As in the stable-set case, the preprocessing preserves the feasible set of the full coupled formulation, while strengthening the local block descriptions used by the decomposition algorithm.

\subsection{Coupling equalities and topologies of $T=(V,E)$}
\label{subsec:test-couplings}

A coupling-density parameter specifies how many equality constraints are placed on each edge of the block tree. We denote this number by $q$. For each edge $e=ij\in E$, we select two ordered lists of local indices, one in block $i$, $I^{e,i} =(\iota^{e,i}_1,\ldots,\iota^{e,i}_q)$, and one in block $j$, $ I^{e,j}=(\iota^{e,j}_1,\ldots,\iota^{e,j}_q)$, and impose the pairwise equalities $x^i_{\iota^{e,i}_p} = x^j_{\iota^{e,j}_p}$, $p=1,\ldots,q$. Equivalently, $Q^e=\{1,\ldots,q\}$ indexes the coupling positions on edge $e$, and $x_p^{e,i}$ denotes the local variable $x^i_{\iota^{e,i}_p}$. The choice of the ordered index lists depends on the block-tree topology. Fix $n:=|U_i|$ as the number of local variables in all blocks $i$.

\paragraph{Star.}
The star topology has center block $1$ and edges $e=1j$, $j=2,\ldots,|V|$. For every such edge, the same first $q$ variables of the center block are coupled with the first $q$ variables of the leaf block: $I^{1j,1}=(1,\ldots,q)$, $I^{1j,j}=(1,\ldots,q)$, for  $j=2,\ldots,|V|$. Thus the same center variables are reused across all incident edges. This creates the usual two-stage structure in which all leaf blocks share the same first-stage variables.

\paragraph{Path.}
For the path topology, the edges are $e=i(i+1)$, $i=1,\ldots,|V|-1$. The last $q$ variables of block $i$ are coupled with the first $q$ variables of block $i+1$: $I^{i(i+1),i}=(n-q+1,\ldots,n)$, $I^{i(i+1),i+1} = (1,\ldots,q)$. Hence, for each $p=1,\ldots,q$, the equality is $x^i_{n-q+p}=x^{i+1}_{p}$. This convention makes the right boundary of one block coincide with the left boundary of the next block.

\paragraph{Binary tree.}
For the binary-tree topology, with the natural ordering of nodes, each parent block $i$ is connected to its children $2i$ and $2i+1$, whenever these indices are present. For each parent--child edge $e=ij$, the last $q$ variables of the parent are coupled with the first $q$ variables of the child: $ I^{ij,i} = (n-q+1,\ldots,n)$, $I^{ij,j} = (1,\ldots,q)$. Thus, if a block has two children, the same last $q$ parent variables are coupled to both children. This produces a branching analogue of the path construction found in multi-stage stochastic programming problems.

\paragraph{Random tree.}
For the random-tree topology, the block tree is sampled randomly on a prescribed number of blocks. For each sampled edge $e=ij$, we choose $q$ distinct local indices from block $i$ and $q$ distinct local indices from block $j$, uniformly at random and independently across the two endpoints, i.e., $I^{e,i}\subseteq U_i$, $I^{e,j}\subseteq U_j$, $|I^{e,i}|=|I^{e,j}|=q$. The order of the sampled lists determines the pairing of the equality constraints. The sampling is performed separately for each edge, so a local variable of a high-degree block may be selected in more than one incident coupling edge.

\subsection{Instance generation}
\label{subsec:test-generation}

All computational experiments use instances with $|V|=15$ blocks. Each block contains $100$ local binary variables. For both the stable-set and dominating-set families, the local graph in each block is generated by sampling $500$ edges uniformly without replacement from the complete graph on $100$ nodes. Thus, each local graph is a sparse random graph with the same number of vertices and edges. The random seed of each block is derived from the global instance seed and the block index, so that different blocks are generated independently but reproducibly.

For each problem family, we consider three coupling sizes $q\in\{20,30,40\}$. For each value of $q$, we generate instances on the four deterministic or random block-tree topologies described above: star, path, binary tree, and random tree. In addition, we generate stochastic variants for the star and binary-tree topologies.

The non-stochastic instances use the same objective scaling in all blocks. In the stochastic variants, the objective coefficients are scaled to mimic repeated or inherited decisions across stages. For the star topology, the central block is assigned weight $|V|-1$, while every leaf block has weight $1$. For the binary-tree topology, the weight of a block depends on its depth. With $|V|=15$, the tree has four levels; a block at level $t$ receives weight $2^{4-t}$, so the root has weight $8$, its children have weight $4$, the next level has weight $2$, and the leaves have weight $1$.

We use five independent seeds for each combination of problem family, topology, coupling size, and stochastic flag. The stochastic flag is only used for the star and binary-tree topologies. Thus, for each problem family and each coupling size $q$, the test set contains four non-stochastic topology classes and two stochastic topology classes.

\subsection{Computational setup}
\label{subsec:computational-setup}

All algorithms were implemented in Python 3.13 using Gurobi 13.0.1 as the MIP solver. The experiments were run on Amazon EC2 \texttt{c8a.4xlarge} instances with 16 vCPUs and 32 GB of RAM. Each run was assigned a time limit of $1800$ seconds. The monolithic model had an additional 26 GB soft memory limit to avoid crashing. Instances that reached either of these limits were tagged with status \texttt{TimeLimit} and \texttt{MemoryLimit}, respectively.

The experiments solve one benchmark instance at a time. For the CRG algorithms, the pricing phase exploits the block structure by solving the 15 block pricing subproblems in parallel, using one Gurobi thread per pricing model. The monolithic formulation is solved with 16 Gurobi threads. The auxiliary separation MIPs used by the Monomial, Reflected, and Generalized strategies are also solved with 16 Gurobi threads.

Each CRG run is initialized from a short monolithic solve. Specifically, the monolithic formulation is solved with a deterministic Gurobi work limit of $10$, which suffices to find a feasible solution. Its restriction to each block is used to create one initial column per block, and its objective value initializes the primal bound. We also solve the LP relaxation of the monolithic model and use its objective value as the initial dual bound.

The restricted master problem is solved as a linear program during column generation. A new column is added when its reduced cost is smaller than $-\varepsilon_{\mathrm{price}}=-10^{-4}$. After column generation terminates for the current set of generated encoding rows, the current master solution is aggregated by boundary signatures and the selected separation strategy is applied on each block-tree edge. The default row-separation tolerance is $\varepsilon_{\mathrm{cut}}=10^{-6}$ for the Vertex, Monomial, Reflected, and Generalized strategies.

The number of generated rows per separation call is controlled by a factor parameter. For the Vertex strategy, we use factor $1.0$, so at most $q$ violated signature equalities are added per edge and separation call, where $q$ is the number of coupling equalities per edge. For the Monomial, Reflected, and Generalized strategies, we use factor $0.5$; the corresponding auxiliary separation MIP stores up to $0.5q$ improving solutions from the Gurobi solution pool in each orientation of the edge.

We also consider a hybrid CRG strategy that combines the Monomial and Vertex separation mechanisms. The algorithm starts with the Monomial strategy, which searches for violated monomial equalities through the auxiliary separation problem. This phase is used for the first five row-generation rounds, adding up to $0.5q$ M-type rows per edge and orientation in each separation call. After this initial phase, the algorithm switches permanently to the Vertex strategy, which separates violated signature equalities and adds up to $q$ V-type rows per edge and separation call. Rows generated during the Monomial phase remain active after the switch and continue to contribute dual terms to the pricing problems.

The CRG algorithm stops with status \texttt{Optimal} when no columns and no rows are added. It also stops early with status \texttt{Gap\_Closed} when the relative gap between the current primal and dual bounds falls below $10^{-6}$. Because the tested objectives are integer-valued, we also declare convergence with status \texttt{Integer\_Gap\_Closed} when the primal and dual bounds have the same integer ceiling up to a tolerance of $10^{-6}$. After each row-generation round, a five-second MIP heuristic is run on the current master with binary column variables. At the end of the CRG run, the final generated master is also solved with binary column variables to update the best primal bound.

For the star topology, we also implemented two benchmark decomposition methods: the integer L-shaped method \cite{laporte-louveaux, angulo-ahmed-dey} and scenario decomposition~\cite{ahmed-scenario}. The integer L-shaped master problem is solved with 16 Gurobi threads, while its 14 scenario subproblems are solved in parallel using one Gurobi thread each. Scenario decomposition solves 15 block subproblems in parallel, again using one Gurobi thread per subproblem. Thus, both implementations use the available threads in a manner comparable to the CRG algorithms, while respecting the different subproblem structure of each method.

\section{Computational results}
\label{sec:computational-results}

This section evaluates the computational behavior of the proposed CRG variants on the stable-set and dominating-set test beds. The benchmark contains 180 instances, 90 for each problem. Each instance is specified by a decomposition topology, a coupling level, a stochastic flag when applicable, and a random seed. The common benchmark compares the five CRG variants against the monolithic formulation on all instances. The integer L-shaped and scenario-decomposition baselines are reported separately because they are available only for the star topology.

We use the following status convention throughout the section. Runs with status \texttt{Optimal}, \texttt{Integer\_Gap\_Closed}, or \texttt{Gap\_Closed} are treated as solved, and their final gap is set to zero. Runs with status \texttt{TimeLimit} or \texttt{MemoryLimit} are treated as unsolved and assigned a censored time limit of 1800 seconds. The reported average gap, denoted by \emph{Gap}, therefore averages the final gaps after setting solved runs to zero. The column \emph{Unsolved gap} reports the average gap restricted to runs that did not close. All gaps are reported in percentage points.

The primary ordering criterion is lexicographic: number of solved runs, average censored time, and final gap. This criterion is used to order the solvers within each problem in Table~\ref{tab:common-benchmark}. Under this ordering, a method that closes more runs is preferred even if another method produces a smaller average residual gap on the runs that remain unsolved.

\begin{table}[ht]
\small
\centering
\caption{Common benchmark over all topologies.}
\label{tab:common-benchmark}
\begin{tabular}{llrrrrr}
\toprule
Problem & Solver & Solved & Time & Time solved & Gap & Unsolved gap \\
 & &  & (s) & (s) & (\%) & (\%) \\
\midrule
Dominating set & Vertex & 19/90 & 1607 & 888 & 6.22 & 7.88 \\
Dominating set & Hybrid & 14/90 & 1682 & 1044 & 5.96 & 7.06 \\
Dominating set & Monomial & 13/90 & 1683 & 990 & 6.04 & 7.06 \\
Dominating set & Generalized & 4/90 & 1758 & 849 & 7.84 & 8.21 \\
Dominating set & Reflected & 4/90 & 1759 & 886 & 7.95 & 8.32 \\
Dominating set & Monolithic & 0/90 & 1800 & -- & 13.52 & 13.52 \\
\addlinespace
Stable set & Hybrid & 50/90 & 963 & 293 & 2.31 & 5.20 \\
Stable set & Monomial & 49/90 & 975 & 285 & 2.30 & 5.06 \\
Stable set & Vertex & 47/90 & 993 & 255 & 2.59 & 5.41 \\
Stable set & Generalized & 32/90 & 1201 & 115 & 3.51 & 5.44 \\
Stable set & Reflected & 31/90 & 1205 & 72 & 3.71 & 5.66 \\
Stable set & Monolithic & 14/90 & 1535 & 100 & 8.11 & 9.60 \\
\bottomrule
\end{tabular}
\end{table}

Table~\ref{tab:common-benchmark} shows a clear separation between the two problem classes. The stable-set instances are substantially easier: the best methods close slightly more than half of the runs, whereas the best dominating-set method closes only 19 out of 90 runs. For the stable-set benchmark, Hybrid has the best lexicographic performance, followed closely by Monomial and Vertex. Monomial obtains the smallest average gap, but Hybrid closes one additional run and has the smallest average censored time. For the dominating-set benchmark, Vertex is the best method under the lexicographic criterion because it closes the largest number of runs and has the smallest average censored time. However, Hybrid and Monomial provide stronger residual bounds on the unsolved runs. Thus, Vertex is the most effective method when the objective is to close runs, while Hybrid and Monomial are preferable when the quality of the final bound is the main criterion.

The monolithic formulation is not competitive on this benchmark. It closes only 14 stable-set runs and no dominating-set runs, and it has the largest average gaps in both problem classes. Its poor behavior is especially pronounced for the dominating-set instances, where all monolithic runs reach the time or memory limit.

\begin{figure}[tbp]
\centering
\begin{subfigure}{0.7\textwidth}
  \centering
  \includegraphics[width=\linewidth]{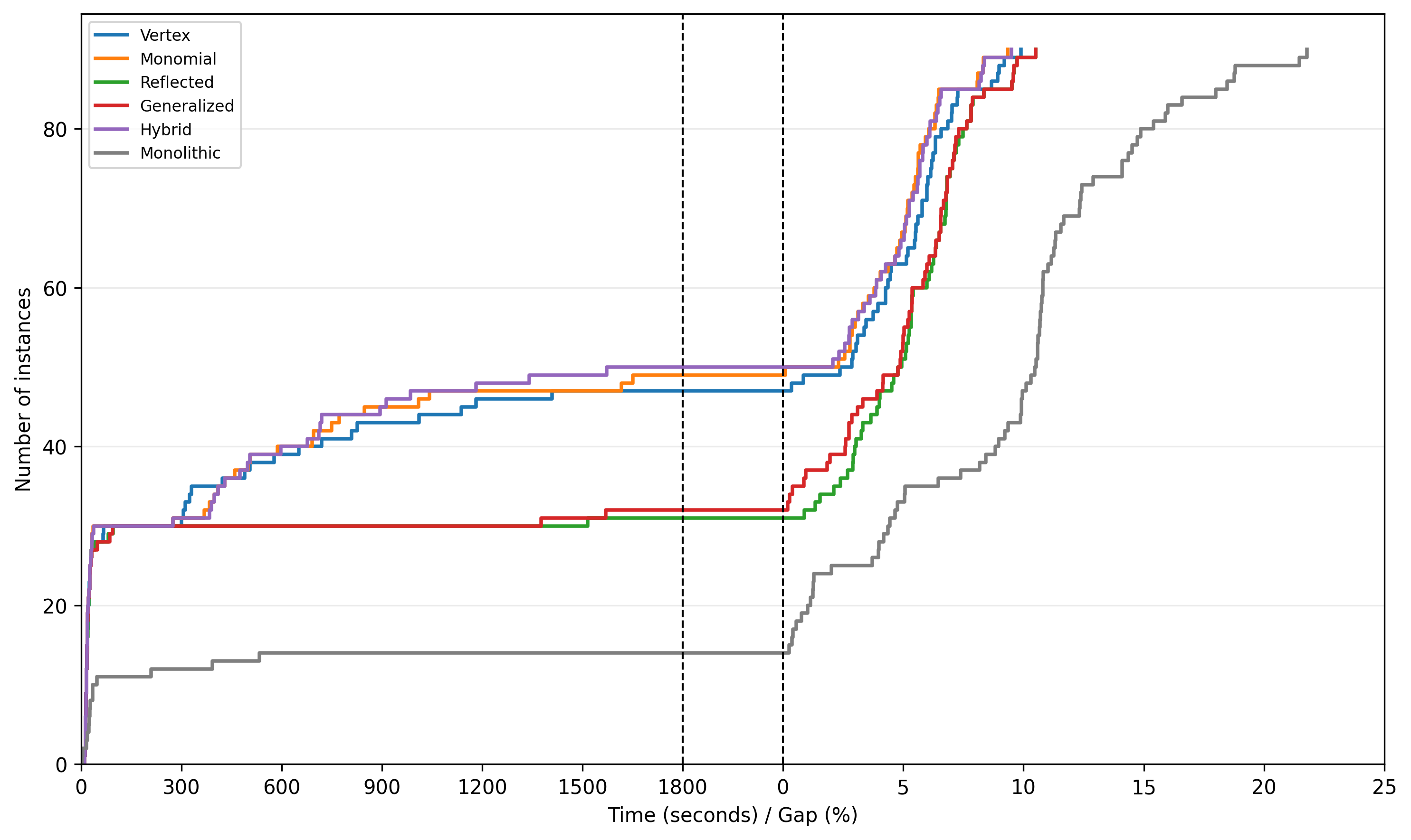}
  \caption{Stable set}
  \label{fig:profiles-common-stable}
\end{subfigure}

\vspace{0.5em}

\begin{subfigure}{0.7\textwidth}
  \centering
  \includegraphics[width=\linewidth]{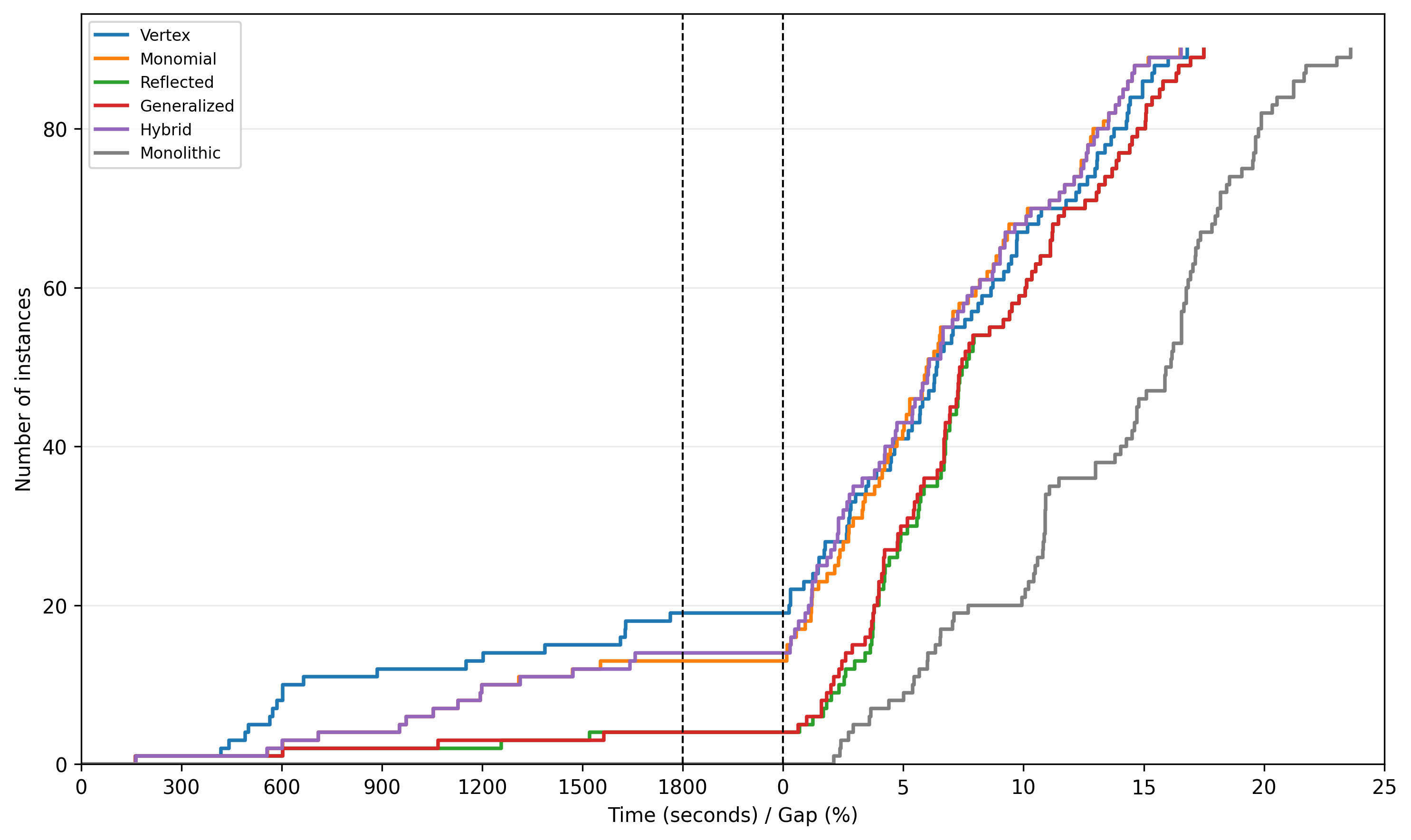}
  \caption{Dominating set}
  \label{fig:profiles-common-dominating}
\end{subfigure}
\caption{Common performance profiles. The left side of each panel orders
solved instances by running time; the right side orders unsolved instances by
final gap.}
\label{fig:profiles-common}
\end{figure}

Figure~\ref{fig:profiles-common} gives the distributional view behind the aggregate statistics. The left side of each profile orders solved instances by running time, whereas the right side orders unsolved instances by final gap. In the stable-set profile (Figure~\ref{fig:profiles-common-stable}), the CRG variants dominate the monolithic baseline by closing many more instances and by producing smaller residual gaps when they do not close. The relative ordering among Hybrid, Monomial, and Vertex is close, which is consistent with Table~\ref{tab:common-benchmark}. In the dominating-set profile (Figure~\ref{fig:profiles-common-dominating}), the separation between Vertex and the other CRG variants is mainly on the solved side of the plot, while Hybrid and Monomial remain competitive on the gap side. This confirms that no single CRG variant dominates the others uniformly: the best method depends on whether the priority is closing instances or obtaining the strongest residual bound.

\subsection{Iteration counts and convergence}
\label{subsec:iterations-convergence}

Table~\ref{tab:crg-iterations} reports average iteration counts for the five CRG variants on the common benchmark. Outer iterations correspond to rounds in which the master is strengthened by newly generated cuts. Inner iterations correspond to pricing iterations and therefore capture most of the repeated subproblem work. The ratio \emph{Inner/outer} gives a coarse measure of how much pricing work is performed per outer round.

\begin{table}[ht]
\small
\centering
\caption{Average CRG iteration counts.}
\label{tab:crg-iterations}
\begin{tabular}{llrrrrr}
\toprule
Problem & Solver & Solved & Outer it. & Inner it. & Inner/outer & Cuts \\
\midrule
Dominating set & Vertex & 19/90 & 3.0 & 365.9 & 129.0 & 788 \\
Dominating set & Hybrid & 14/90 & 6.3 & 490.8 & 79.6 & 516 \\
Dominating set & Monomial & 13/90 & 6.9 & 506.2 & 76.3 & 325 \\
Dominating set & Generalized & 4/90 & 2.1 & 405.8 & 198.2 & 400 \\
Dominating set & Reflected & 4/90 & 2.0 & 396.3 & 196.4 & 390 \\
\addlinespace
Stable set & Hybrid & 50/90 & 6.2 & 408.3 & 60.7 & 976 \\
Stable set & Monomial & 49/90 & 7.7 & 444.8 & 55.1 & 431 \\
Stable set & Vertex & 47/90 & 4.4 & 345.0 & 73.5 & 1338 \\
Stable set & Generalized & 32/90 & 2.7 & 337.5 & 118.2 & 566 \\
Stable set & Reflected & 31/90 & 2.5 & 333.4 & 123.9 & 509 \\
\bottomrule
\end{tabular}
\end{table}

The iteration statistics reveal two complementary patterns. First, Vertex usually uses fewer outer iterations than Monomial and Hybrid. On the dominating-set benchmark, Vertex performs only 3.0 outer iterations on average, compared with 6.3 for Hybrid and 6.9 for Monomial. Second, Monomial and Hybrid spread the work over more outer rounds and have lower inner-per-outer ratios. This behavior is consistent with their stronger residual gaps on unsolved dominating-set instances. Generalized and Reflected complete relatively few outer rounds that require many inner iterations per outer round and do not obtain comparable closure rates or gaps.

Figure~\ref{fig:convergence-pair} illustrates these iteration patterns on two binary-tree instances. The top panel shows a stable-set instance with stochastic coupling, and the bottom panel shows a deterministic dominating-set instance with the same topology and coupling level. In each panel, solid curves report the dual bound, dashed step curves report the incumbent primal bound, markers indicate the end of outer iterations, and the lower subpanel records the cuts added at those rounds. The stable-set instance (Figure~\ref{fig:convergence-pair-stable}; maximization form) is a nontrivial case in which Vertex closes first, Monomial and Hybrid close later, and Generalized and Reflected reach the time limit. The dominating-set instance (Figure~\ref{fig:convergence-pair-dominating}; minimization form) is harder: no CRG method closes, so the comparison is driven by the final residual gap. The trajectories show how the dual bound evolves between cut-injection rounds and how primal improvements occur only intermittently. In both cases, the methods that generate the most cuts are not necessarily the methods with the best final outcome, which motivates a separate look at where and how many cuts are generated.

\begin{figure}[tbp]
\centering
\begin{subfigure}{0.7\textwidth}
  \centering
  \includegraphics[width=\linewidth]{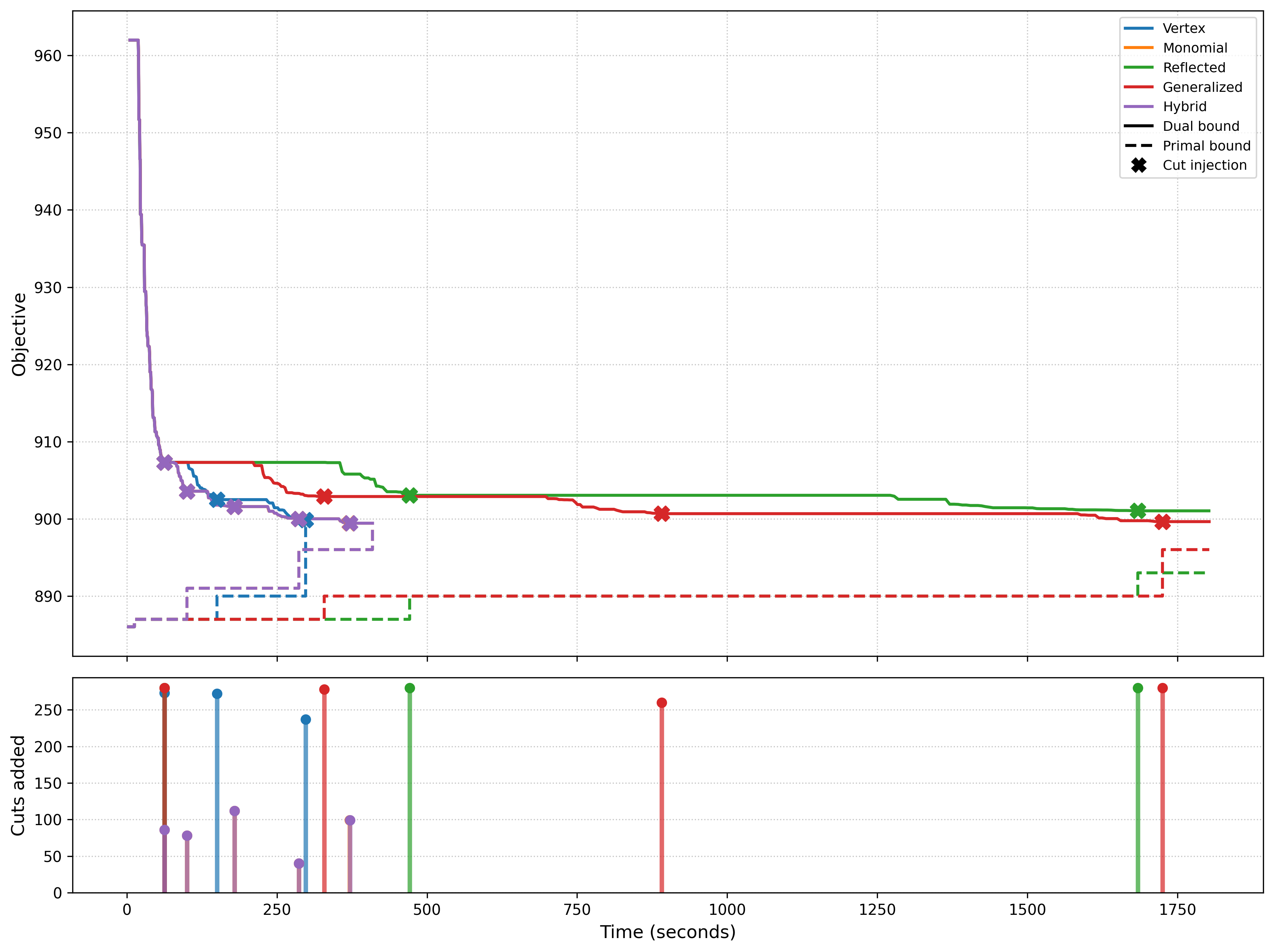}
  \caption{Stable set}
  \label{fig:convergence-pair-stable}
\end{subfigure}

\vspace{0.5em}

\begin{subfigure}{0.7\textwidth}
  \centering
  \includegraphics[width=\linewidth]{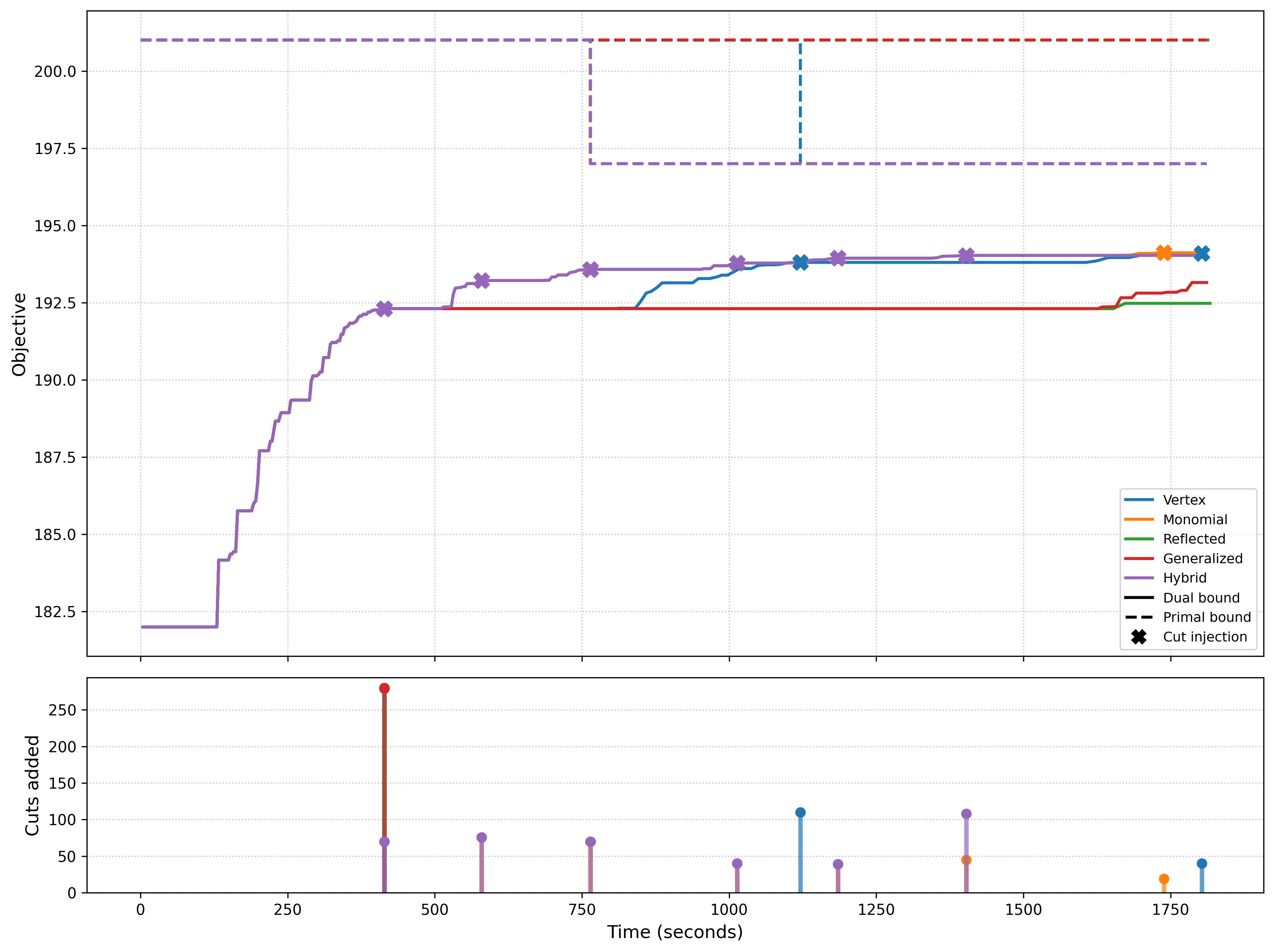}
  \caption{Dominating set}
  \label{fig:convergence-pair-dominating}
\end{subfigure}
\caption{Representative convergence trajectories.}
\label{fig:convergence-pair}
\end{figure}

\subsection{Effect of topology and coupling}
\label{subsec:topology-coupling}

The aggregate comparison hides a strong dependence on the structure of the instance. Table~\ref{tab:instance-effects} reports the effect of topology and coupling on the common benchmark. The star topology is the easiest topology for both problems, but the effect is much stronger for the stable-set instances. On the stable-set benchmark, the common solvers (the five CRG variants and the monolithic formulation) close 164 out of 180 star runs, whereas the non-star topologies are much harder. On the dominating-set benchmark, even the star topology remains difficult: only 35 out of 180 common solver runs close.

Coupling is the most visible difficulty driver. For the dominating-set problem, increasing the coupling level from 20 to 30 reduces the closure rate from 47/180 to 6/180, and coupling level 40 leaves only one closed run among 180. The stable-set instances are less sensitive in terms of closure counts because the star instances remain easy, but the average gap increases monotonically with the coupling level.

The stochastic flag has a milder effect than topology or coupling. Restricting the comparison to the topologies where both deterministic and stochastic variants are present, stochasticity is nearly neutral for the stable-set problem. For the dominating-set problem, stochastic instances are easier on average, mostly because the deterministic binary-tree cases are particularly hard.

\begin{table}[ht]
\small
\centering
\caption{Effect of topology, coupling, and the stochastic flag on the common benchmark.}
\label{tab:instance-effects}
\begin{tabular}{lllrrrr}
\toprule
Factor & Problem & Value & Solved & Time & Gap & Unsolved gap \\
 & & & & (s) & (\%) & (\%) \\
\midrule
Topology & Dominating set & star & 35/180 & 1598 & 3.87 & 4.80 \\
Topology & Dominating set & path & 10/90 & 1752 & 10.59 & 11.92 \\
Topology & Dominating set & binary tree & 6/180 & 1783 & 9.36 & 9.69 \\
Topology & Dominating set & random tree & 3/90 & 1776 & 10.47 & 10.83 \\
\addlinespace
Topology & Stable set & star & 164/180 & 186 & 0.17 & 1.91 \\
Topology & Stable set & binary tree & 37/180 & 1578 & 5.33 & 6.71 \\
Topology & Stable set & path & 15/90 & 1626 & 5.69 & 6.82 \\
Topology & Stable set & random tree & 7/90 & 1719 & 5.84 & 6.33 \\
\midrule
Coupling & Dominating set & 20 & 47/180 & 1569 & 4.42 & 5.98 \\
Coupling & Dominating set & 30 & 6/180 & 1777 & 7.72 & 7.98 \\
Coupling & Dominating set & 40 & 1/180 & 1799 & 11.63 & 11.69 \\
\addlinespace
Coupling & Stable set & 20 & 105/180 & 961 & 1.88 & 4.50 \\
Coupling & Stable set & 30 & 58/180 & 1265 & 3.93 & 5.80 \\
Coupling & Stable set & 40 & 60/180 & 1210 & 5.46 & 8.19 \\
\midrule
Stochastic & Dominating set & true & 33/180 & 1624 & 5.87 & 7.18 \\
Stochastic & Dominating set & false & 8/180 & 1757 & 7.36 & 7.71 \\
\addlinespace
Stochastic & Stable set & true & 103/180 & 854 & 2.78 & 6.49 \\
Stochastic & Stable set & false & 98/180 & 910 & 2.72 & 5.98 \\
\bottomrule
\end{tabular}
\end{table}

\subsection{Star topology and specialized baselines}
\label{subsec:star-specialized}

The integer L-shaped and scenario-decomposition baselines are meaningful only on the star topology. Therefore, they are excluded from the common performance profiles and reported separately in Table~\ref{tab:star-baselines}. On stable-set star instances, all CRG variants close all runs, but the two specialized baselines are substantially faster. Integer L-shaped closes all 30 runs in 4.6 seconds on average, and scenario decomposition closes all 30 runs in 7.0 seconds on average. The fastest CRG methods require about 20 seconds on average.

The same qualitative conclusion does not fully carry over to the dominating-set star instances. Integer L-shaped remains the strongest method, closing 20 out of 30 runs. Scenario decomposition closes fewer runs than Integer L-shaped, although its average gap is slightly smaller. Among the CRG methods, Vertex is the strongest by the lexicographic criterion, while Hybrid and Monomial are competitive in terms of final gap.

Overall, these comparisons are encouraging for CRG: although it does not exploit the specialized master--recourse structure available on star instances, its performance remains competitive with methods designed specifically for that setting, particularly on the harder dominating-set instances.

\begin{table}[ht]
\small
\centering
\caption{Star-topology comparison with specialized baselines.}
\label{tab:star-baselines}
\begin{tabular}{llrrrr}
\toprule
Problem & Solver & Solved & Time & Gap & Unsolved gap \\
 & & & (s) & (\%) & (\%) \\
\midrule
Dominating set & Integer L-shaped & 20/30 & 740 & 2.08 & 6.24 \\
Dominating set & Scenario decomp. & 12/30 & 1298 & 1.97 & 3.28 \\
Dominating set & Vertex & 11/30 & 1327 & 2.50 & 3.95 \\
Dominating set & Hybrid & 9/30 & 1550 & 2.46 & 3.52 \\
Dominating set & Monomial & 7/30 & 1560 & 2.63 & 3.44 \\
Dominating set & Generalized & 4/30 & 1673 & 4.01 & 4.62 \\
Dominating set & Reflected & 4/30 & 1678 & 4.02 & 4.64 \\
Dominating set & Monolithic & 0/30 & 1800 & 7.58 & 7.58 \\
\addlinespace
Stable set & Integer L-shaped & 30/30 & 5 & 0.00 & -- \\
Stable set & Scenario decomp. & 30/30 & 7 & 0.00 & -- \\
Stable set & Monomial & 30/30 & 20 & 0.00 & -- \\
Stable set & Hybrid & 30/30 & 20 & 0.00 & -- \\
Stable set & Vertex & 30/30 & 22 & 0.00 & -- \\
Stable set & Reflected & 30/30 & 24 & 0.00 & -- \\
Stable set & Generalized & 30/30 & 24 & 0.00 & -- \\
Stable set & Monolithic & 14/30 & 1006 & 1.02 & 1.91 \\
\bottomrule
\end{tabular}
\end{table}

\subsection{Cut-generation behavior}
\label{subsec:cut-behavior}

The preceding tables and performance profiles compare the final outcomes of the algorithms. Figure~\ref{fig:heatmaps} instead illustrates how the CRG variants generate the rows used to enforce consistency between adjacent blocks. Each row of a heatmap corresponds to an outer iteration and each column to a coupling edge. The top panels report the average support size of the signatures generated on each edge, whereas the bottom panels report the corresponding number of cuts.

Figure~\ref{fig:heatmap-dominating} shows a representative hard dominating-set instance on a binary tree with coupling level 20. None of the CRG variants closes this instance, so their final residual gaps provide the relevant secondary comparison. Generalized and Reflected generate many relatively dense cuts in their first outer iterations, but this activity does not translate into a comparably strong final bound. Monomial and Hybrid distribute their cut generation over more outer iterations and finish with smaller residual gaps. Vertex is more aggressive and closes more dominating-set instances overall, although this example also illustrates that the number of closed instances and the quality of the residual bound need not induce the same ranking.

Figure~\ref{fig:heatmap-stable} presents a stable-set instance for which the algorithms exhibit different closing behavior. Vertex closes first, followed by Monomial and Hybrid, whereas Generalized and Reflected reach the time limit. The latter two methods again generate a large number of dense signatures early in the solution process. Thus, the heatmaps suggest that generating more cuts, or cuts with larger supports, does not by itself produce faster convergence. The structural diversity of the generated signatures also appears to be relevant.

\begin{figure}[tbp]
\centering
\begin{subfigure}{\textwidth}
  \centering
  \includegraphics[width=\linewidth]{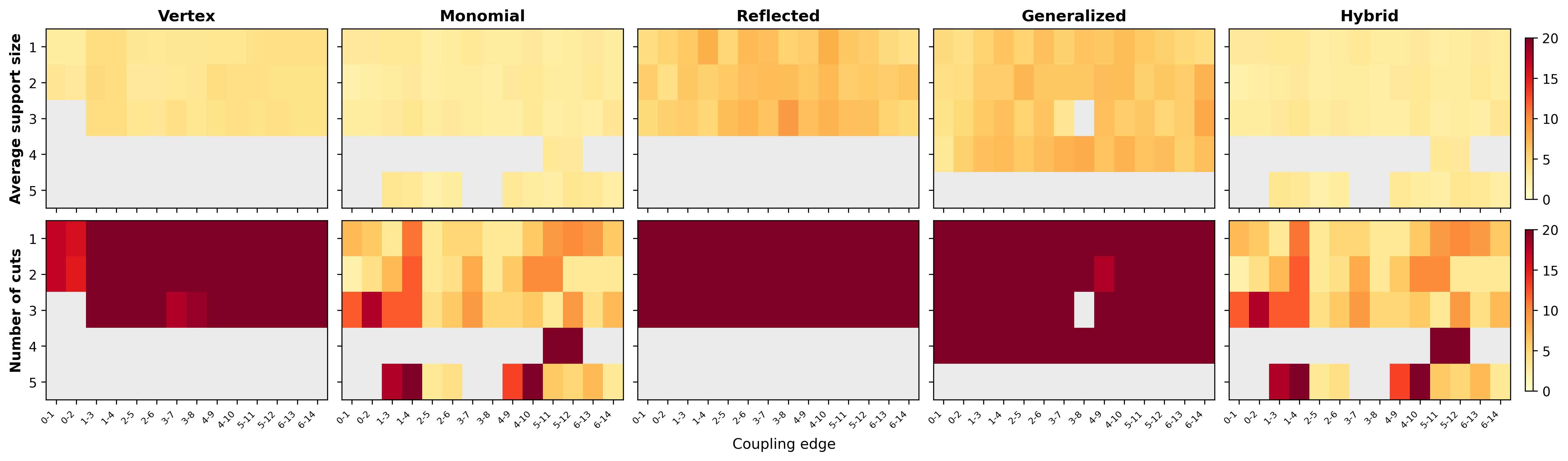}
  \caption{Stable set, binary tree, $q=20$, stochastic}
  \label{fig:heatmap-stable}
\end{subfigure}

\vspace{0.5em}

\begin{subfigure}{\textwidth}
  \centering
  \includegraphics[width=\linewidth]{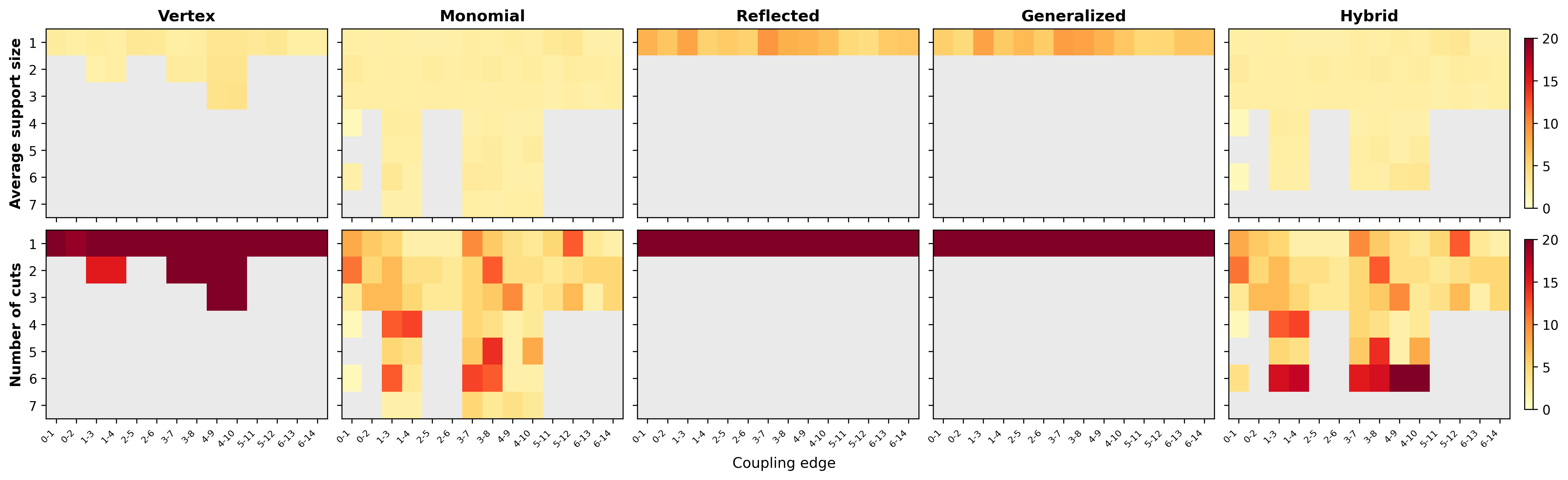}
  \caption{Dominating set, binary tree, $q=20$, deterministic}
  \label{fig:heatmap-dominating}
\end{subfigure}
\caption{Cuts and average signature support size per outer iteration and coupling edge.}
\label{fig:heatmaps}
\end{figure}

To examine this issue systematically, we use the signatures recorded in the cut-generation logs. Recall from Section~\ref{subsec:test-couplings} that every coupling edge $e$ carries $q$ pairs of boundary variables, indexed by $Q^{e}=\{1,\ldots,q\}$, and from Section~\ref{sec:strategies} that a generated Monomial or Reflected row is specified by a nonempty support $S\subseteq Q^{e}$, a Generalized row by a pair of disjoint sets $S^{+},S^{-}\subseteq Q^{e}$, and a Vertex row by a full assignment $\mathbf{s}\in\B^{Q^{e}}$. We call this defining data, together with the edge, the \emph{signature} of the row, and we refer to the elements of $S^{+}$ and $S^{-}$ as the positive and negative \emph{literals} of a Generalized row. The diversity analysis below concerns the three set-based encodings.

The \emph{literal set} of a generated row $h$ is $\mathcal L(h):=S$ for a Monomial or Reflected row with support $S$, and $\mathcal L(h):=\{(p,+):p\in S^{+}\}\cup\{(p,-):p\in S^{-}\}$ for a Generalized row, so that Monomial and Reflected literal sets are unsigned while Generalized literal sets retain signs. The \emph{degree} of a row is $|\mathcal L(h)|$ and its \emph{density} is $|\mathcal L(h)|/q$. For two rows $h$ and $h'$ generated on the same edge, we measure structural dissimilarity through the Jaccard index~\cite{tan-dm}
$$
J(h,h'):=1-\frac{|\mathcal L(h)\cap\mathcal L(h')|}{|\mathcal L(h)\cup\mathcal L(h')|}\in[0,1].
$$
A pair is classified as \emph{nearly duplicate} when $J(h,h')\leq 0.2$, or equivalently when at least $80\%$ of their combined literals coincide. A pair is \emph{nested} when one of its literal sets is contained in the other.

Table~\ref{tab:signature-diversity-first} summarizes these statistics for the first outer iteration in which each encoding generates cuts. Recall that the separation problem is solved in both orientations of each coupling edge and may produce up to $0.5q$ rows per orientation, giving a total per-edge row budget of $q$. For each active instance--edge pair, ``Budget share'' is the number of generated signatures divided by $q$, while ``Density'' is the average signature support size divided by $q$. The remaining statistics are averages over signatures generated on the edge. These edge-level quantities are then averaged over active edges within each instance and finally across instances. This hierarchical aggregation gives equal weight to each instance, while normalizing by $q$ makes the budget and density columns comparable across coupling levels. An edge contributes only if it generates at least one signature.

\begin{table}[t]
\centering
\small
\setlength{\tabcolsep}{4pt}
\caption{Signature diversity and effective row-budget utilization in the first cut-producing outer iteration.}
\label{tab:signature-diversity-first}
\begin{tabular}{llrrrrr}
\toprule
Problem & Encoding & Budget share & Density & Jaccard & Near dup. & Nested \\
\midrule
Stable set & Monomial    & 0.196 & 0.094 & 0.705 & 0.015 & 0.193 \\
Stable set & Reflected   & 0.873 & 0.257 & 0.544 & 0.178 & 0.230 \\
Stable set & Generalized & 0.887 & 0.249 & 0.571 & 0.163 & 0.210 \\
Dominating set & Monomial    & 0.140 & 0.077 & 0.738 & 0.002 & 0.127 \\
Dominating set & Reflected   & 0.907 & 0.316 & 0.531 & 0.229 & 0.211 \\
Dominating set & Generalized & 0.911 & 0.311 & 0.537 & 0.219 & 0.203 \\
\bottomrule
\end{tabular}
\end{table}

The differences are substantial. On an active edge in the first iteration, Monomial uses on average only 19.6\% of the available row budget for stable set and 14.0\% for dominating set. Reflected and Generalized use between 87.3\% and 91.1\%, indicating that their separation procedures typically return close to the maximum permitted number of rows. Their signatures are also much denser: the average Monomial density is below 0.10, whereas the average density of Reflected and Generalized ranges from 0.25 to 0.32.

Despite using a much smaller fraction of the row budget and producing sparser signatures, Monomial exhibits a substantially larger average pairwise Jaccard distance, 0.71--0.74 compared with 0.53--0.57. Moreover, its average fraction of nearly duplicate pairs is below 2\% for both problem classes, whereas between 16\% and 23\% of the Reflected and Generalized pairs are nearly duplicate. Thus, the additional rows produced by Reflected and Generalized are not proportionally more diverse; their high budget utilization is associated with denser and more mutually similar signatures.

The nesting statistic provides more qualified evidence. For dominating set, nested pairs are less frequent under Monomial, with a fraction of 0.13 compared with 0.20--0.21. For stable set, the differences are more modest, with fractions between 0.19 and 0.23. The most consistent distinction is therefore the combination of higher budget utilization, higher density, smaller pairwise Jaccard distance, and a larger fraction of nearly duplicate pairs under Reflected and Generalized.

These observations are consistent with the behavior of the encoding functions on sparse columns. For Monomial, the set of columns on which $\phi_S^{M,e}$ evaluates to one can only shrink when a coordinate is added to $S$. If the columns are sparse, high-degree monomials tend to evaluate to zero on almost every column in both adjacent blocks. Such rows are unlikely to exhibit a large disagreement, so useful Monomial supports tend to be small.

Reflected applies the same construction to the complements of the shared variables. When the original columns are sparse, their complements are dense. Thus, a coordinate may be added to a Reflected support without changing the evaluation of the row on any active column: if $x_p^{e,i,k}=0$ for every active column $k\in\mathcal K^{i}$ with $\phi_{S}^{\mathrm{R},e}(\mathbf{x}^{e,i,k})=1$, then $\phi_{S\cup\{p\}}^{\mathrm{R},e}$ and $\phi_{S}^{\mathrm{R},e}$ coincide on every active column of the block. The separation problem can therefore admit several high-degree supports that are distinct as sets but induce identical or nearly identical rows on the current restricted master. The Generalized family contains the Reflected family as the special case $S^{+}=\emptyset$ and may inherit this behavior.

The comparison between Generalized and Reflected signatures supports this interpretation. Table~\ref{tab:generalized-reflected-overlap} compares the rows generated by the two methods on the same instance, coupling edge, and first outer iteration. A Generalized signature is called \emph{negative-only} when $S^{+}=\emptyset$ and $S^{-}\neq\emptyset$; its unsigned support is then $S^{-}$. The column ``Negative-only'' reports the fraction of Generalized signatures of this form, while ``Exact match'' reports the fraction of negative-only signatures whose unsigned support coincides with the support of a Reflected row generated on the same edge. The last two columns report the mean Jaccard distance from each unsigned Generalized support to its closest Reflected support, first over all Generalized signatures and then over the negative-only signatures. Entries are averaged over signatures within each edge, over edges within each instance, and finally across instances. An edge contributes to a column only when the corresponding quantity is defined.

\begin{table}[t]
\centering
\small
\caption{Overlap between Generalized and Reflected signatures.}
\label{tab:generalized-reflected-overlap}
\begin{tabular}{lrrrrr}
\toprule
Problem & Instances & Negative only & Exact match & Nearest dist. & Nearest neg. \\
\midrule
Stable set & 74 & 0.736 & 0.682 & 0.145 & 0.055 \\
Dominating set & 77 & 0.839 & 0.574 & 0.129 & 0.060 \\
\bottomrule
\end{tabular}
\end{table}

The fraction of first-iteration Generalized signatures that are negative-only is $0.736$ for stable set and $0.839$ for dominating set. Among the negative-only signatures, the mean fraction whose unsigned support exactly matches that of a Reflected row generated on the same edge is $0.682$ and $0.574$, respectively. More generally, negative-only signatures remain close to the Reflected supports: their mean nearest-neighbor Jaccard distance is $0.055$ for stable set and $0.060$ for dominating set, compared with $0.145$ and $0.129$ when all Generalized signatures are included. Thus, in the first iteration, Generalized behaves predominantly, though not exclusively, like Reflected at the level of unsigned supports; signatures containing positive literals tend to use supports farther from those generated by Reflected.

Overall, the stored signatures provide empirical support for the mechanism suggested by the heatmaps. Monomial generates fewer and sparser rows, but their signatures are more dispersed in literal space. Reflected and Generalized generate many dense rows concentrated around common patterns, and Generalized largely reproduces the negative-literal structure of Reflected. This helps explain why their high cut counts do not yield a corresponding improvement in convergence.

\section{Discussion}
\label{sec:discussion}

This paper develops a computational realization of strengthened Lagrangian dual for tree-structured integer programs. The main idea is to work with the primal convex characterization of the strengthened dual and to generate both of its potentially exponential objects dynamically: feasible local solutions are generated as columns through blockwise pricing, while consistency constraints using nonlinear encoding functions are generated as rows through separation. With a complete exact encoding family, this gives an exact algorithm; moreover before convergence, completed outer iterations provide valid dual bounds. Thus, exactness does not require explicitly constructing the exponentially large formulation apriori, and local optimization remains decomposed across the blocks throughout the algorithm.

Computationally, CRG consistently outperforms the monolithic formulation on stable-set and dominating-set benchmarks, closing more instances and producing stronger bounds, while applying without modification across star, path, binary-tree, and random-tree topologies. On the star instances, specialized integer L-shaped and scenario-decomposition methods are generally marginally faster, as expected from methods designed specifically for a master--recourse structure. Nevertheless, CRG remains competitive, particularly on the harder dominating-set instances, despite making no use of a distinguished first-stage block or recourse value function. The intended role of the framework is therefore not to replace specialized methods when additional structure is available, but to provide a common decomposition mechanism when it is not.

The experiments suggest that interface size is an important driver of difficulty. Performance deteriorates as the number of shared variables increases, especially for dominating set. This is consistent with the formulation: local complexity is handled independently inside the pricing oracles, whereas the master must progressively recover the information lost by separating the blocks across their shared boundaries.

This leads to perhaps the most important conclusion concerning the choice of encoding. The theoretical results show that Generalized encodings can require exponentially fewer rows than Vertex, Monomial, or Reflected encodings on suitable instances, while even the Generalized family can require exponentially many rows when a single problem-specific encoding suffices. The computations reveal a complementary phenomenon: greater expressive power does not necessarily translate into a better algorithm. Generalized and Reflected separation frequently produces many dense and closely related rows, whereas Monomial generates substantially fewer, sparser, and more diverse rows. Consequently, the theoretically richer Generalized family performs worse than the simpler Monomial and Vertex strategies on the present benchmarks. What matters computationally is therefore not only whether a family can eventually close the gap, but whether separation can identify a small collection of informative rows whose strengthening effect justifies the additional complexity they introduce into subsequent pricing problems.

These observations suggest several directions for future research. First, encoding selection itself should be adaptive. The Hybrid results already show that changing the encoding strategy during the computation can be beneficial. More sophisticated schemes could use the current boundary distributions, violations, dual multipliers, or the observed effectiveness and redundancy of previous rows to choose which encoding family to search and which rows to retain. Second, there is considerable potential in discovering problem-specific encodings. The parity example demonstrates that the difference between a generic family and an appropriate problem-specific encoding can be exponential. 


More broadly, the results highlight the information exchanged across block boundaries as a key determinant of decomposition performance. Nonlinear edge encodings capture this information, but different encodings can require dramatically different numbers of constraints to recover exactness.


\bibliographystyle{plain}
\bibliography{ref}

@article{bergner2015automatic,
  author  = {Bergner, Martin and Caprara, Alberto and Ceselli, Alberto and
             Furini, Fabio and L{\"u}bbecke, Marco E. and Malaguti, Enrico and
             Traversi, Emiliano},
  title   = {Automatic {D}antzig--{W}olfe reformulation of mixed integer programs},
  journal = {Mathematical Programming},
  volume  = {149},
  number  = {1--2},
  pages   = {391--424},
  year    = {2015},
  doi     = {10.1007/s10107-014-0761-5}
}

@article{FaenzaMunozPokutta2022,
  author  = {Yuri Faenza and Gonzalo Mu{\~n}oz and Sebastian Pokutta},
  title   = {New limits of treewidth-based tractability in optimization},
  journal = {Mathematical Programming},
  volume  = {191},
  number  = {2},
  pages   = {559--594},
  year    = {2022}
}

@book{tan-dm,
  author    = {Pang-Ning Tan and Michael Steinbach and Anuj Karpatne and Vipin Kumar},
  title     = {Introduction to Data Mining},
  edition   = {2nd},
  publisher = {Pearson},
  year      = {2019}}

@article{deng-xie,
  author  = {Haoyun Deng and Weijun Xie},
  title   = {On the {R}e{LU} {L}agrangian cuts for stochastic mixed integer programming},
  journal = {arXiv preprint arXiv:2411.01229},
  year    = {2024}}

@article{bdgl-ijoc,
  author  = {Merve Bodur and Sanjeeb Dash and Oktay G{\"u}nl{\"u}k and James Luedtke},
  title   = {Strengthened {B}enders cuts for stochastic integer programs with continuous recourse},
  journal = {INFORMS Journal on Computing}, volume = {29}, number = {1},
  pages   = {77--91}, year = {2017}}

@article{zas-sddip,
  author  = {Jikai Zou and Shabbir Ahmed and Xu Andy Sun},
  title   = {Stochastic dual dynamic integer programming},
  journal = {Mathematical Programming}, volume = {175}, number = {1},
  pages   = {461--502}, year = {2019}}

@article{cl-ijoc,
  author  = {Rui Chen and James Luedtke},
  title   = {On generating {L}agrangian cuts for two-stage stochastic integer programs},
  journal = {INFORMS Journal on Computing}, volume = {34}, number = {4},
  pages   = {2332--2349}, year = {2022}}

@article{acp-lipschitz,
  author  = {Shabbir Ahmed and Filipe Goulart Cabral and Bernardo Freitas Paulo da Costa},
  title   = {Stochastic {L}ipschitz dynamic programming},
  journal = {Mathematical Programming}, volume = {191}, number = {2},
  pages   = {755--793}, year = {2022}}

@article{anstreicher2009two,
  title={Two “well-known” properties of subgradient optimization},
  author={Anstreicher, Kurt M. and Wolsey, Laurence A.},
  journal={Mathematical Programming},
  volume={120},
  number={1},
  pages={213--220},
  year={2009},
  publisher={Springer}
}

@book{minsky-papert,
  author    = {Minsky, Marvin and Papert, Seymour},
  title     = {Perceptrons: An Introduction to Computational Geometry},
  publisher = {MIT Press},
  address   = {Cambridge, MA},
  year      = {1969}
}

@article{caroe-schultz,
  author  = {Car{\o}e, Claus C. and Schultz, R{\"u}diger},
  title   = {Dual decomposition in stochastic integer programming},
  journal = {Operations Research Letters},
  volume  = {24},
  number  = {1--2},
  pages   = {37--45},
  year    = {1999}
}

@article{rockafellar-wets,
  author  = {Rockafellar, R. Tyrrell and Wets, Roger J.-B.},
  title   = {Scenarios and policy aggregation in optimization under uncertainty},
  journal = {Mathematics of Operations Research},
  volume  = {16},
  number  = {1},
  pages   = {119--147},
  year    = {1991}
}

@book{birge-louveaux,
  author    = {Birge, John R. and Louveaux, Fran{\c{c}}ois},
  title     = {Introduction to Stochastic Programming},
  edition   = {2nd},
  publisher = {Springer},
  address   = {New York},
  year      = {2011}
}

@article{dey2018analysis,
  title={Analysis of sparse cutting planes for sparse {MILPs} with applications to stochastic {MILPs}},
  author={Dey, Santanu S. and Molinaro, Marco and Wang, Qianyi},
  journal={Mathematics of Operations Research},
  volume={43},
  number={1},
  pages={304--332},
  year={2018},
  publisher={INFORMS}
}

@inproceedings{cdx-ipco,
  author    = {Cifuentes, Diego and Dey, Santanu S. and Xu, Jingye},
  title     = {Lagrangian Dual for Integer Optimization with Zero Duality Gap that Admits Decomposition},
  booktitle = {Integer Programming and Combinatorial Optimization (IPCO 2025)},
  series    = {Lecture Notes in Computer Science},
  pages     = {184--198},
  year      = {2025},
  publisher = {Springer}
}

@incollection{geoffrion,
  author    = {Geoffrion, Arthur M.},
  title     = {Lagrangean Relaxation for Integer Programming},
  booktitle = {Approaches to Integer Programming},
  pages     = {82--114},
  year      = {2009},
  publisher = {Springer},
  address   = {Amsterdam, Netherlands}
}

@article{guignard-kim,
  author  = {Guignard, Monique and Kim, Siwhan},
  title   = {Lagrangean Decomposition: A Model Yielding Stronger
             {L}agrangean Bounds},
  journal = {Mathematical Programming},
  volume  = {39},
  number  = {2},
  pages   = {215--228},
  year    = {1987}
}

@article{robertson-seymour,
  author  = {Robertson, Neil and Seymour, Paul D.},
  title   = {Graph Minors. {II}. {Algorithmic Aspects of Tree-Width}},
  journal = {Journal of Algorithms},
  volume  = {7},
  number  = {3},
  pages   = {309--322},
  year    = {1986}
}

@article{dantzig-wolfe,
  author  = {Dantzig, George B. and Wolfe, Philip},
  title   = {Decomposition Principle for Linear Programs},
  journal = {Operations Research},
  volume  = {8},
  number  = {1},
  pages   = {101--111},
  year    = {1960}
}

@article{barnhart-bp,
  author  = {Barnhart, Cynthia and Johnson, Ellis L. and Nemhauser,
             George L. and Savelsbergh, Martin W. P. and Vance,
             Pamela H.},
  title   = {Branch-and-Price: Column Generation for Solving Huge
             Integer Programs},
  journal = {Operations Research},
  volume  = {46},
  number  = {3},
  pages   = {316--329},
  year    = {1998}
}

@article{luebbecke-cg,
  author  = {L\"ubbecke, Marco E. and Desrosiers, Jacques},
  title   = {Selected Topics in Column Generation},
  journal = {Operations Research},
  volume  = {53},
  number  = {6},
  pages   = {1007--1023},
  year    = {2005}
}

@article{minoux-ouzia,
  author  = {Minoux, Michel and Ouzia, Hacene},
  title   = {{DRL*}: A Hierarchy of Strong Block-Decomposable Linear Relaxations for 0--1 {MIP}s},
  journal = {Discrete Applied Mathematics},
  volume  = {158},
  number  = {18},
  pages   = {2031--2048},
  year    = {2010}
}

@article{sherali-adams,
  author  = {Sherali, Hanif D. and Adams, Warren P.},
  title   = {A Hierarchy of Relaxations Between the Continuous and Convex Hull Representations for Zero-One Programming Problems},
  journal = {SIAM Journal on Discrete Mathematics},
  volume  = {3},
  number  = {3},
  pages   = {411--430},
  year    = {1990}
}

@article{lasserre,
  author  = {Lasserre, Jean B.},
  title   = {Global Optimization with Polynomials and the Problem of Moments},
  journal = {SIAM Journal on Optimization},
  volume  = {11},
  number  = {3},
  pages   = {796--817},
  year    = {2001}
}

@article{rothvoss,
  author  = {Rothvo{\ss}, Thomas},
  title   = {The {Lasserre} Hierarchy in Approximation Algorithms},
  journal = {Lecture Notes for the MAPSP},
  pages   = {1--25},
  year    = {2013}
}

@article{benders,
  author  = {Benders, Jacobus F.},
  title   = {Partitioning Procedures for Solving Mixed-Variables
             Programming Problems},
  journal = {Numerische Mathematik},
  volume  = {4},
  number  = {1},
  pages   = {238--252},
  year    = {1962}
}

@article{laporte-louveaux,
  author  = {Laporte, Gilbert and Louveaux, Fran\c{c}ois V.},
  title   = {The Integer {L}-Shaped Method for Stochastic Integer
             Programs with Complete Recourse},
  journal = {Operations Research Letters},
  volume  = {13},
  number  = {3},
  pages   = {133--142},
  year    = {1993}
}

@article{angulo-ahmed-dey,
  author  = {Angulo, Gustavo and Ahmed, Shabbir and Dey, Santanu S.},
  title   = {Improving the Integer {L}-Shaped Method},
  journal = {INFORMS Journal on Computing},
  volume  = {28},
  number  = {3},
  pages   = {483--499},
  year    = {2016}
}

@article{ahmed-scenario,
  author  = {Ahmed, Shabbir},
  title   = {A Scenario Decomposition Algorithm for 0--1 Stochastic
             Programs},
  journal = {Operations Research Letters},
  volume  = {41},
  number  = {6},
  pages   = {565--569},
  year    = {2013}
}

@article{boland-ph,
  author  = {Boland, Natashia and Christiansen, Jeffrey and Dandurand,
             Brian and Eberhard, Andrew and Linderoth, Jeff and Luedtke,
             James and Oliveira, Fabricio},
  title   = {Combining Progressive Hedging with a {Frank--Wolfe} Method
             to Compute {Lagrangian} Dual Bounds in Stochastic
             Mixed-Integer Programming},
  journal = {SIAM Journal on Optimization},
  volume  = {28},
  number  = {2},
  pages   = {1312--1336},
  year    = {2018}
}

@article{feizollahi-al,
  author  = {Feizollahi, Mohammad Javad and Ahmed, Shabbir and Sun,
             Andy},
  title   = {Exact Augmented {Lagrangian} Duality for Mixed Integer
             Linear Programming},
  journal = {Mathematical Programming},
  volume  = {161},
  number  = {1--2},
  pages   = {365--387},
  year    = {2017}
}

@article{gu-al,
  author  = {Gu, Xiaoyi and Ahmed, Shabbir and Dey, Santanu S.},
  title   = {Exact Augmented {Lagrangian} Duality for Mixed Integer
             Quadratic Programming},
  journal = {SIAM Journal on Optimization},
  volume  = {30},
  number  = {1},
  pages   = {781--797},
  year    = {2020}
}

@book{beltran-sl,
  author    = {Beltran, C\'esar and Tadonki, Claude and Vial,
               Jean-Philippe},
  title     = {Semi-Lagrangian Relaxation},
  publisher = {HEC Gen\`eve},
  address   = {Geneva, Switzerland},
  year      = {2004}
}

@article{sun-decomp,
  author  = {Sun, Kaizhao and Sun, Mou and Yin, Wotao},
  title   = {Decomposition Methods for Global Solution of Mixed-Integer
             Linear Programs},
  journal = {SIAM Journal on Optimization},
  volume  = {34},
  number  = {2},
  pages   = {1206--1235},
  year    = {2024}
}

@article{bertele-brioschi,
  author  = {Bertel\`e, Umberto and Brioschi, Francesco},
  title   = {On Non-Serial Dynamic Programming},
  journal = {Journal of Combinatorial Theory, Series A},
  volume  = {14},
  number  = {2},
  pages   = {137--148},
  year    = {1973}
}

@article{bienstock-munoz,
  author  = {Bienstock, Daniel and Mu{\~n}oz, Gonzalo},
  title   = {{LP} Formulations for Polynomial Optimization Problems},
  journal = {SIAM Journal on Optimization},
  volume  = {28},
  number  = {2},
  pages   = {1121--1150},
  year    = {2018}
}

\section*{Statements and declarations}


\subsection*{Competing Interests}
The authors have no relevant financial or non-financial interests to disclose.

\subsection*{Data Availability}
Code and results are available at \texttt{https://github.com/ganguloo/block-lagrangian}.

\end{document}